\documentclass[12pt,reqno]{amsart}

\usepackage[hmargin=3cm,vmargin=3cm]{geometry}
\usepackage{amsmath,amssymb}
\usepackage{graphicx}
\usepackage{mathtools}
\usepackage{thmtools,thm-restate}
\usepackage{color}

\usepackage{hyperref}

\usepackage{enumitem}

\newtheorem{theorem}{Theorem}[section]
\newtheorem{proposition}[theorem]{Proposition}
\newtheorem{corollary}[theorem]{Corollary}

\newtheorem{lemma}[theorem]{Lemma}

\theoremstyle{definition}
\newtheorem{definition}[theorem]{Definition}

\newtheorem*{definition*}{Definition}

\theoremstyle{remark}
\newtheorem{remark}[theorem]{Remark}

\numberwithin{equation}{section}


\def\NN{\mathbb{N}}
\def\RR{\mathbb{R}}

\def\ZZ{\mathbb{Z}}

\def\Sone{\mathbb{S}^1}


\newcommand{\cB}{{\mathcal B}}

\newcommand{\cE}{{\mathcal E}}

\newcommand{\cH}{{\mathcal H}}
\newcommand{\cI}{{\mathcal I}}
\newcommand{\cJ}{{\mathcal J}}

\newcommand{\cL}{{\mathcal L}}

\newcommand{\cP}{{\mathcal P}}

\newcommand{\cT}{{\mathcal T}}

\newcommand{\cZ}{{\mathcal Z}}
%

\newcommand{\sign}{\operatorname{sign}}

\newcommand{\tr}{\operatorname{tr}}
\newcommand{\myspan}{\operatorname{span}}

\newcommand{\rank}{\operatorname{rank}}

\DeclareMathOperator\Div{div}

\DeclareMathOperator\curl{curl}
\DeclareMathOperator\Imag{Im}
\DeclareMathOperator\Bd{bdry}
\DeclareMathOperator\Int{int}
\newcommand{\id}{{\rm id}}
\DeclareMathOperator\RegImag{RegIm}
\DeclareMathOperator\diam{diam}
\DeclareMathOperator\dist{dist}
\DeclareMathOperator\hess{Hess}


\addtolength{\parskip}{3pt}


\author{David Perrella}
\address{David Perrella,
The University of Western Australia, 35 Stirling Highway, Crawley WA 6009, Australia \newline \it{e-mail: david.perrella@uwa.edu.au}}

\thanks{The author thanks Nathan Duignan and David Pfefferl\'e for feedback on the paper. The author also thanks Nathan for providing the curve in Figure \ref{fig:solid-torus-example}, along with important ideas for its construction in Appendix \ref{app:for-the-figure}. The author would lastly like to thank the mathematics department of the University of Western Australia for providing a space to continue research after completion of his PhD}

\title{Closed orbits of MHD equilibria with orientation-reversing symmetry}

\begin{document}

\begin{abstract}
As a generalisation of the periodic orbit structure often seen in reflection or mirror symmetric MHD equilibria, we consider equilibria with other orientation-reversing symmetries. An example of such a symmetry, which is a not a reflection, is the parity transformation $(x,y,z) \mapsto (-x,-y,-z)$ in $\RR^3$. It is shown under any orientation-reversing isometry, that if the pressure function is assumed to have toroidally nested level sets, then all orbits on the tori are necessarily periodic. The techniques involved are almost entirely topological in nature and give rise to a handy index describing how a diffeomorphism of $\RR^3$ alters the poloidal and toroidal curves of an invariant embedded 2-torus.
\end{abstract}

\maketitle

\section{Introduction}

This paper is primarily concerned with the periodic nature of MHD equilibria with symmetry by orientation-reversing isometry. A by-product of the analysis gives a topological characterisation of how diffeomorphisms of $\RR^3$ act on (the first homology of) an invariant 2-torus, which could be useful in other problems studying symmetries of magnetic fields. By MHD equilibrium, it is meant a solution $(B,p)$ to the set of equations
\begin{equation*}
\curl B \times B = \nabla p, \qquad \Div B = 0, \qquad B \cdot n = 0,
\end{equation*}
where $B$ is a smooth vector field, $p$ is a smooth function, and $n : \partial M \to TM$ denotes the outward unit normal on an orientable Riemannian 3-manifold $M$ with boundary. 

The case of most interest in magnetic confinement fusion is when $M \subset \RR^3$ is a solid toroidal domain and $p$ has toroidally nested level sets. In this case, the boundary condition $B \cdot n = 0$ is implied by $B \cdot \nabla p = 0$. By $M$ being a solid toroidal domain, we mean that $M$ is embedded in $\RR^3$ and is diffeomorphic to $\mathbb{S}^1 \times D$, where $\Sone = \mathbb{R}/\mathbb{Z}$ is the circle and $D$ is the closed unit disk in $\RR^2$. The meaning of toroidally nested level sets is best conveyed in a picture (see Figure \ref{fig:solid-torus-example}). See Definition \ref{def:toroial-lvl-sets} in Section \ref{sec:extension-to-nested-toroidal-surfaces} for the precise meaning used in this paper.

\begin{figure}\label{fig:solid-torus-example}
    \centering
    \includegraphics[width=0.8\linewidth]{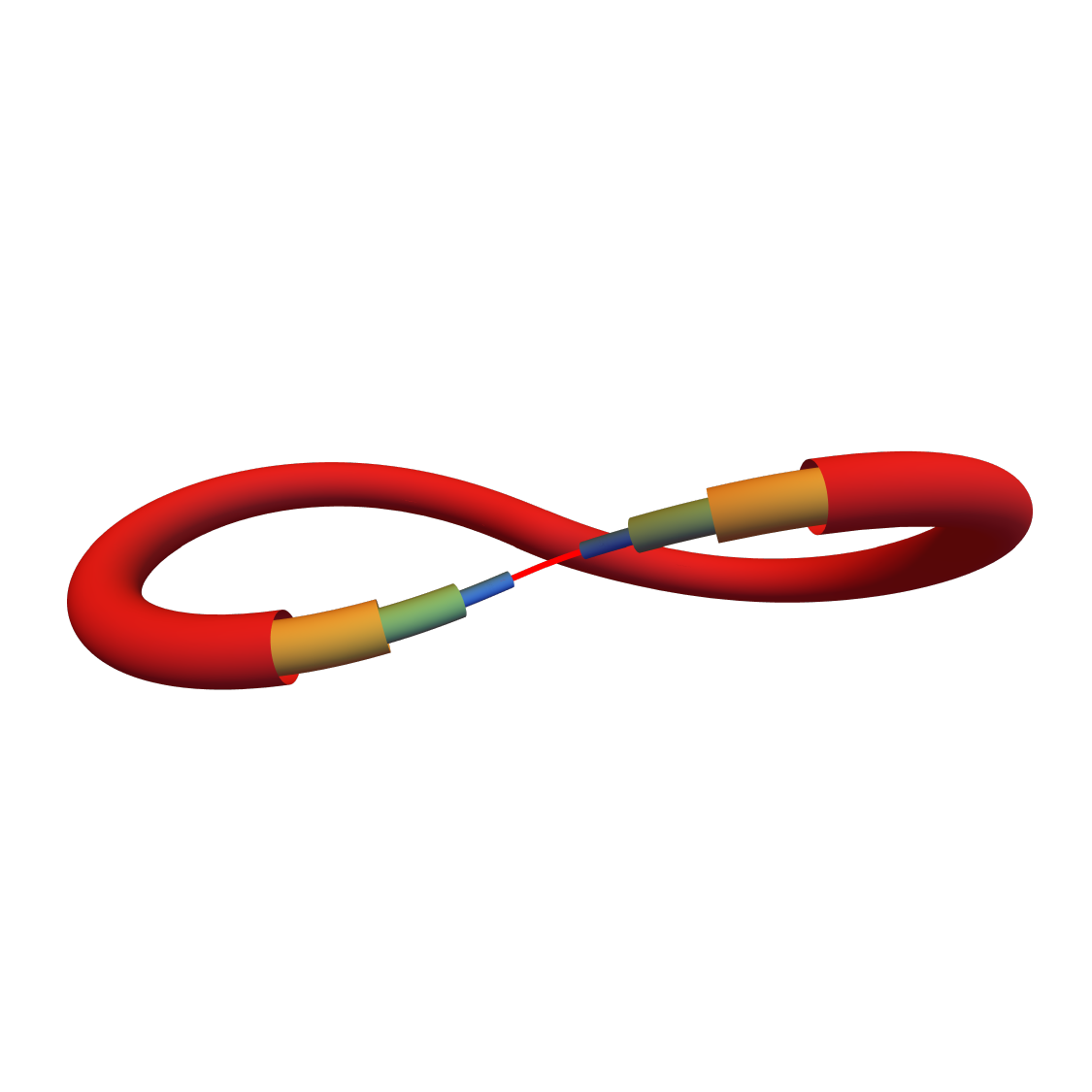}
    \caption{The level sets of $p(x) = \dist(x,\Gamma)^2$, the distance squared to the image $\Gamma$ of the (red) curve $\gamma(t) = (r(t) \cos t \sin \theta(t),r(t) \sin t  \sin\theta(t),r(t)\cos\theta(t))$ with $r(t) = 2+\cos 2t$ and $\theta(t) = \pi/2+\sin(2t)/5$. The curve $\gamma$ and its depicted tubular neighbourhood $\{\dist(x,\Gamma) \leq R\}$ for $R > 0$ sufficiently small have only $(x,y,z) \mapsto (-x,-y,-z)$ as a non-trivial Euclidean symmetry. The function $p$ has toroidally nested level sets with axis $\Gamma$.}
\end{figure}

Geometric symmetries of a domain $M \subset \RR^3$ are often exploited to construct MHD equilibria. In fact, there is the outstanding conjecture of Grad \cite{Grad67,Grad85} which states loosely that if $M \subset \RR$ is a solid toroidal domain and $(B,p)$ is an MHD equilibrium on $M$ where $p$ has toroidally nested level sets, then $M$, $B$, and $p$ admit a mutual Euclidean symmetry. See also \cite{Constantin21a} for more precision on the conjecture.

The most common symmetry used to construct MHD equilibria is continuous rotational symmetry about some axis, known as axisymmetry. Another symmetry which is less common is that of reflection or mirror symmetry. Lortz \cite{Lortz70} constructed MHD equilibria in solid toridal domains which possess reflection symmetry. More precisely, up to rotation and translation, Lortz considers toroidal domains $M \subset \RR^3$ which are preserved by the transformation $(x,y,z) \mapsto (-x,y,z)$ in $\RR^3$; if $\Psi : M \to M$ denotes the corresponding diffeomorphism, then the MHD equilibria $(B,p)$ on $M$ constructed by Lortz satisfy
\begin{equation*}
\Psi_* B = -B, \qquad \Psi^*p = p,    
\end{equation*}
where $\Psi_*$ denotes the pushfoward and $\Psi^*$ denotes the pullback by $\Psi$. Moreover, the constructed $B$ have no zeros, all orbits of $B$ are periodic, and the plane of reflection $\{x=0\}$ intersects $M$ in two disjoint closed disks. 

Complementary to the set-up of Lortz, Salat \cite{Salat95} considers (up to rotation and translation) toroidal domains $M \subset \RR^3$ which are preserved by the transformation $(x,y,z) \mapsto (x,y,-z)$ in $\RR^3$. In this set-up, the plane of reflection $\{z = 0\}$ intersects $M$ in a single annulus. If $\Psi : M \to M$ denotes the corresponding diffeomorphism, then the MHD equilibria $(B,p)$ on $M$ are assumed to satisfy
\begin{equation*}
\Psi_* B = -B, \qquad \Psi^*p = p.    
\end{equation*} 
The pressure function $p$ is assumed to have toroidally nested level sets and the critical set of $p$, known as the axis of $p$ (which is an embedded circle), lies in the plane of reflection $\{z=0\}$. In this case, the field-lines are also all closed. In relation to Grad's Conjecture, Salat calculated that up to order 7 in a Fourier expansion, that his configurations are necessarily axisymmetric.

The equation
\begin{equation*}
\Psi_* B = -B    
\end{equation*}
on compact $M$ can be equivalently written in terms of the global flow $\psi^B$ of $B$ (which exists due to the condition $B \cdot n = 0$ on $\partial M$) as the statement that, for $t \in \RR$,
\begin{equation*}
\psi^B_{-t} = \Psi \circ \psi^B_t \circ \Psi^{-1}.  
\end{equation*}
If $\Psi \circ \Psi = \id$, then the flow $\psi^B$ is considered to be reversible in the same sense as \cite{Birkhoff_1917,price1935reversible,Wiggins_2003} and periodic orbits can be found for these systems by fixed points of the time-reversing symmetry $\Psi$. This is what takes place for the cases of Lortz and Salat mentioned above.

Given the interest in MHD equilibria with reflection symmetry, it is interesting to see what happens under, say, the symmetry $(x,y,z) \mapsto (-x,-y,-z)$, as this too is an orientation-reversing isometry of $\RR^3$. Unlike the aformentioned situations considered by Lortz and Salat, the only fixed point of this symmetry is the origin. Hence, it is less obvious in this case whether one also necessarily obtains periodic orbits for MHD equilibria which are symmetric under this transformation. Nevertheless, it is shown here that periodic orbits are abundant in this situation as well. 

To arrive at a precise statement about periodic orbits of MHD equilibria, we must first introduce a convenient index describing how a diffeomorphism of $\RR^3$ alters the poloidal and toroidal curves of an invariant embedded 2-torus.

\begin{proposition}\label{prop:homology-action-characterisation}
Let $S \subset \RR^3$ be an embedded 2-torus. Then, $\RR^3 \setminus S$ has two connected components, one of which is bounded and the other unbounded. Let $\cB_S$ denote the bounded component and $\cE_S$ denote the unbounded component. The closures $\overline{\cB_S} = \cB_S \cup \Bd(\cB_S)$ and $\overline{\cE_S} = \cE_S \cup \Bd(\cE_S)$ are smooth regular domains with boundary 
\begin{equation*}
\partial \overline{\cB_S} = \Bd(\cB_S) = S =  \Bd(\cE_S) = \partial \overline{\cE_S}.
\end{equation*}
Consider the inclusions $\jmath_{\cB_S} : S \subset \overline{\cB_S}$ and $\jmath_{\cE_S} : S \subset \overline{\cE_S}$ and the associated homomorphisms
\begin{align*}
{\jmath_{\cB_S}}_* &: H_1(S) \to H_1(\overline{\cB_S})\\
{\jmath_{\cE_S}}_* &: H_1(S) \to H_1(\overline{\cE_S}).
\end{align*}
Then, we have an internal direct sum
\begin{equation}\label{eq:split-poloidal-and-toroidal}
H_1(S) = \cP_S \oplus \cT_S,
\end{equation}
where $\cP_S$ and $\cT_S$ respectively denote, the \emph{poloidal subgroup} and the \emph{toroidal subgroup} of $H_1(S)$ given by
\begin{equation*}
\cP_S \coloneqq \ker {\jmath_{\cB_S}}_* \cong \ZZ, \qquad \cT_S \coloneqq \ker {\jmath_{\cE_S}}_* \cong \ZZ.
\end{equation*}
Let $\cI : \RR^3 \to \RR^3$ be a diffeomorphism such that $\cI(S) = S$ and let $\psi : S \to S$ denote the associated diffeomorphism on $S$. Then, the map $\psi_* : H_1(S) \to H_1(S)$ acts on the poloidal and toroidal subgroups as
\begin{equation}\label{eq:def-of-indices}
\begin{split}
\psi_*|_{\cP_S} &= \sigma_1 \id|_{\cP_S}\\
\psi_*|_{\cT_S} &= \sigma_2 \id|_{\cT_S},
\end{split}
\end{equation}
for some unique $\sigma_1,\sigma_2 \in \{+1,-1\}$. If $\cI$ is orientation-preserving, then $\sigma_1 = \sigma_2$. Otherwise, $\sigma_1 = -\sigma_2$.
\end{proposition}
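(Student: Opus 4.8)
The plan is to establish the statement in four stages: the topological scaffolding (separation and the splitting \eqref{eq:split-poloidal-and-toroidal}), invariance of the two subgroups under $\psi_*$, the $\pm 1$ conclusion, and finally the orientation dichotomy. For the scaffolding I would record that the separation of $\RR^3 \setminus S$ into a bounded component $\cB_S$ and an unbounded component $\cE_S$ with common boundary $S$ is the Jordan--Brouwer separation theorem together with smoothness of the embedding (which supplies collar neighbourhoods, hence regular domains). For the splitting itself I would pass to $S^3 = \RR^3 \cup \{\infty\}$, where both sides of $S$ become compact, and invoke the ``half lives, half dies'' principle (a consequence of Poincar\'e--Lefschetz duality): each kernel $\cP_S = \ker {\jmath_{\cB_S}}_*$ and $\cT_S = \ker {\jmath_{\cE_S}}_*$ has rank $1$ in $H_1(S) \cong \ZZ^2$. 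That the two kernels are complementary, so that their sum is direct and fills $H_1(S)$, I would deduce from nondegeneracy of the intersection pairing on $S$ together with the fact that classes dying on opposite sides pair to $\pm 1$ (a poloidal and a toroidal generator meet once); this simultaneously gives $\cP_S \cong \cT_S \cong \ZZ$ as direct summands.

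The key structural observation is that $\cI$ respects the decomposition. Since $\cI$ is a homeomorphism with $\cI(S) = S$, it permutes the two components of $\RR^3 \setminus S$; as $\overline{\cB_S}$ is compact, $\cI(\overline{\cB_S})$ is compact and hence bounded, forcing $\cI(\cB_S) = \cB_S$ and likewise $\cI(\cE_S) = \cE_S$. Thus $\cI$ restricts to diffeomorphisms of $\overline{\cB_S}$ and $\overline{\cE_S}$ agreeing with $\psi$ on $S$, and functoriality of $H_1$ yields commuting squares whose vertical maps are ${\jmath_{\cB_S}}_*$ and ${\jmath_{\cE_S}}_*$ and whose lower horizontal maps are isomorphisms. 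Chasing these squares gives $\psi_*(\cP_S) = \cP_S$ and $\psi_*(\cT_S) = \cT_S$. Since $\psi_*$ is an automorphism of $H_1(S)$ preserving the direct sum $\cP_S \oplus \cT_S$, its restriction to each summand $\cong \ZZ$ is an automorphism of $\ZZ$, i.e. multiplication by some $\sigma_1,\sigma_2 \in \{+1,-1\}$, manifestly unique; this is \eqref{eq:def-of-indices}.

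The orientation dichotomy is where I expect the real work to lie. In the poloidal--toroidal basis $\psi_*$ is $\operatorname{diag}(\sigma_1,\sigma_2)$, so $\det \psi_* = \sigma_1 \sigma_2$, and I would tie this determinant to orientations through two links. First, the algebraic intersection form on $H_1(S) \cong \ZZ^2$ is, in a suitable basis, $J = \left(\begin{smallmatrix} 0 & 1 \\ -1 & 0\end{smallmatrix}\right)$, and $\psi$ transforms it by the sign of its orientation behaviour on $S$: writing $A = \psi_*$, the identity $A^\top J A = (\det A)\, J$ holds for all $2\times 2$ matrices, while $A^\top J A = \epsilon J$ with $\epsilon = +1$ exactly when $\psi$ preserves the orientation of $S$; hence $\sigma_1\sigma_2 = \det \psi_* = \epsilon$. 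Second, I would compare orientations through the normal line bundle: at $x \in S$ split $T_x\RR^3 = T_xS \oplus N_x$ with $N_x$ spanned by the outward normal $\nu(x)$ pointing into $\cE_S$; since $\cI$ preserves $\cE_S$, its differential carries outward vectors to outward vectors, so in the ordered frame (tangent, then normal) $d\cI_x = \left(\begin{smallmatrix} d\psi_x & * \\ 0 & a\end{smallmatrix}\right)$ with $a > 0$, whence $\sign \det d\cI_x = \sign \det d\psi_x$. By connectedness of $\RR^3$ and of $S$ these signs are constant, so $\cI$ preserves the orientation of $\RR^3$ precisely when $\psi$ preserves that of $S$, i.e. precisely when $\epsilon = +1$. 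Combining, $\cI$ is orientation-preserving $\iff \sigma_1\sigma_2 = +1 \iff \sigma_1 = \sigma_2$, and otherwise $\sigma_1 = -\sigma_2$. The delicate points to get right are the sign bookkeeping relating the three orientations (of $\RR^3$, of $S$, and of the normal line) and the verification that $\cP_S$ and $\cT_S$ are genuinely complementary rather than merely each of rank one.
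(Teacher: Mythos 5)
Your overall architecture runs parallel to the paper's for most of the statement: Jordan--Brouwer separation, the compactness argument forcing $\cI(\cB_S)=\cB_S$ and $\cI(\cE_S)=\cE_S$ (this is the paper's Lemma \ref{lem:homolgy-group-invariants}, proved in its Appendix A), functoriality giving $\psi_*(\cP_S)=\cP_S$ and $\psi_*(\cT_S)=\cT_S$, the observation that an automorphism of $\ZZ$ is $\pm\id$, and the dichotomy via $\det\psi_*=\sigma_1\sigma_2$. Your handling of the dichotomy is in fact more explicit than the paper's: you supply both links in the chain --- $\det\psi_*=\pm 1$ according to whether $\psi$ preserves the orientation of $S$ (naturality of the intersection form), and $\cI$ orientation-preserving iff $\psi$ orientation-preserving (the normal-line argument, which is essentially the paper's appendix proof) --- whereas the paper compresses this into a terse citation. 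Where you genuinely diverge is the splitting \eqref{eq:split-poloidal-and-toroidal}: the paper derives it from a Mayer--Vietoris sequence (Lemma \ref{lem:isomorphism-via-inclusions}), which shows $({\jmath_{\cB_S}}_*,{\jmath_{\cE_S}}_*):H_1(S)\to H_1(\overline{\cB_S})\oplus H_1(\overline{\cE_S})$ is an isomorphism and only afterwards counts ranks by a Lagrangian-subspace argument; you propose to get the ranks from ``half lives, half dies'' and the complementarity from the intersection pairing.

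That complementarity step is where your proposal has a genuine gap. The assertion you lean on --- that generators of the two kernels pair to $\pm 1$, parenthetically justified as ``a poloidal and a toroidal generator meet once'' --- is true, but it is precisely the content requiring proof, and nothing you have established implies it. Knowing only that $\cP_S$ and $\cT_S$ are rank one and isotropic for the intersection form does not even exclude $\cP_S=\cT_S$, let alone give index one: in a symplectic basis $e_1,e_2$ of $\ZZ^2$ the subgroups $\ZZ e_1$ and $\ZZ(e_1+2e_2)$ are both Lagrangian and intersect trivially, yet their generators pair to $2$ and they span only an index-two subgroup. You would also need each kernel to be primitive, which in turn needs $H_1(\overline{\cB_S})$ and $H_1(\overline{\cE_S})$ torsion-free --- again not addressed (it is a consequence of, not an input to, the isomorphism above). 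So the intersection-form route requires an additional geometric input (a linking-number computation in $S^3$, or Alexander duality) that is comparable in weight to the claim it is meant to establish. The economical repair is the paper's own argument: apply Mayer--Vietoris to $\RR^3=\overline{\cB_S}\cup N$ with $N$ a slightly thickened $\overline{\cE_S}$ (or to $S^3$ split along $S$); since $H_1$ and $H_2$ of the ambient space vanish, $({\jmath_{\cB_S}}_*,{\jmath_{\cE_S}}_*)$ is an isomorphism, and this single fact yields $\cP_S\cap\cT_S=\{0\}$ and $\cP_S+\cT_S=H_1(S)$ simultaneously, with no primitivity or unimodularity left to check; your ``half lives, half dies'' count then serves exactly the role of the paper's Lemma \ref{lem:ranks-of-homology-subgroups} in identifying each summand with $\ZZ$.
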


Proposition \ref{prop:homology-action-characterisation} is a simple consequence of a Mayer-Vietoris sequence derived from the Jordan-Brower Separation Theorem (see \cite{GP,Lima}) applied to the 2-torus. See Section \ref{sec:poloidal-and-toroidal-homologies} for details.

\begin{remark}
The namesake of $\cP_S$ and $\cT_S$ come from the fact that the maps
\begin{align*}
{\jmath_{\cE_S}}_*|_{\cP_S} &: \cP_S \to H_1(\overline{\cE_S})\\
{\jmath_{\cB_S}}_*|_{\cT_S} &: \cT_S \to H_1(\overline{\cB_S}),
\end{align*}
are in fact isomorphisms. See Lemma \ref{lem:isomorphism-via-inclusions} in Section \ref{sec:poloidal-and-toroidal-homologies}.
\end{remark}

We now formally introduce the poloidal and toroidal index coming about from Proposition \ref{prop:homology-action-characterisation}.

\begin{definition}\label{def:pol-tor-index}
Let $S \subset \RR^3$ be an embedded 2-torus and $\cI : \RR^3 \to \RR^3$ be a diffeomorphism such that $\cI(S) = S$. We call the numbers $\sigma_1$ and $\sigma_2$ satisfying Equation \eqref{eq:def-of-indices} in Proposition \ref{prop:homology-action-characterisation} respectively the \emph{poloidal} and \emph{toroidal} \emph{index of $\cI$ on $S$}.
\end{definition}

Before stating our main result, we make a convenient definition to shorten the description of some simple vector fields.

\begin{definition}
Let $X$ be a vector field on a $2$-torus $S \subset \RR^3$ embedded in $\RR^3$. We will say that the orbits of $X$ are \textit{purely poloidal} (respectively, \textit{toroidal}), if for any (maximal) integral curve $\gamma : \RR \to S$ of $X$, $\gamma$ is a non-trivial periodic curve and, if $T > 0$ denotes its (minimal) period, then the closed curve $c : [0,1] \to S$ given by
\begin{equation*}
c(t) = \gamma(Tt),    
\end{equation*}
is such that $[c]$ is a generator of $\cP_S$ (respectively, $\cT_S$).
\end{definition}

We now state our main result concerning periodic orbits of MHD equilibria.

\begin{theorem}\label{thm:main}
Let $\cI : \RR^3 \to \RR^3$ be an orientation-reversing isometry. Let $M \subset \RR^3$ be a solid toroidal domain. Suppose that $\cI(M) = M$ and let $(B,p)$ be an MHD equilibrium on $M$ such that
\begin{equation}\label{eq:psi-action-on-B-intro}
\Psi_*B = \sigma B,
\end{equation}
for some $\sigma \in \{+1,-1\}$, where $\Psi : M \to M$ is diffeomorphism on $M$ induced by $\cI$. We have that $\cI(\partial M) = \partial M$ so we may let $\sigma_1$ and $\sigma_2$ denote the poloidal and toroidal index of $\cI$ on $\partial M$. Suppose that $p$ has toroidally nested surfaces. Let $y \in \RR$ be a regular value of $p$ in the image of $p$. Then, $B$ and $J = \curl B$ are tangent to the 2-torus $S_y = p^{-1}(y)$ and thus induce vector fields $B_y$ and $J_y$ on $S_y$. The following holds.
\begin{enumerate}
    \item If $\sigma = \sigma_1$, then the orbits of $B_y$ are purely poloidal and the orbits of $J_y$ are purely toroidal.
    \item If $\sigma = \sigma_2$, then the orbits of $B_y$ are purely toroidal and the orbits of $J_y$ are purely poloidal.
\end{enumerate}
\end{theorem}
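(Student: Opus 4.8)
The plan is to reduce everything to the dynamics of a nonvanishing, area-preserving vector field on the torus $S_y$, whose orbit foliation will be forced to be the kernel foliation of a nonsingular closed $1$-form. First I would record the algebraic consequences of the equilibrium equation: dotting $\curl B\times B=\nabla p$ with $B$ and with $J=\curl B$ gives $B\cdot\nabla p=J\cdot\nabla p=0$, so both fields are tangent to $S_y$ and induce $B_y,J_y$ there. Since $y$ is a regular value, $\nabla p=J\times B$ is nowhere zero on $S_y$; as $B,J$ lie in the $2$-plane $T_xS_y$ with nonzero cross product, they are linearly independent at every point, so $B_y$ and $J_y$ are nowhere vanishing and frame $TS_y$.

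Next I would derive the hidden symmetry $\Psi^*p=p$, which is needed so that $\Psi(S_y)=S_y$. Writing $\cI(x)=Ax+b$ with $\det A=-1$, the pseudovectorial nature of curl and cross product gives $\Psi_*(\curl B)=-\curl(\Psi_*B)$ and $\Psi_*(u\times v)=-(\Psi_*u)\times(\Psi_*v)$. Hence $\Psi_*J=-\sigma J$, and applying $\Psi_*$ to $J\times B=\nabla p$ yields $\nabla(p\circ\Psi^{-1})=J\times B=\nabla p$; since $M$ is compact and connected this forces $p\circ\Psi^{-1}=p$. Thus $\psi:=\Psi|_{S_y}$ is a diffeomorphism of $S_y$ with $\psi_*B_y=\sigma B_y$ and $\psi_*J_y=-\sigma J_y$.

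The key structural step is to produce an invariant area form. Choosing a $2$-form $\tau$ near $S_y$ with $dp\wedge\tau=\mu$ (the Euclidean volume form) and setting $\omega:=\tau|_{S_y}$, the condition $B\cdot\nabla p=0$ gives $\iota_B\mu=-dp\wedge\iota_B\tau$, so $\Div B=0$ yields $dp\wedge d(\iota_B\tau)=0$; pulling back to $S_y$ shows that $\lambda:=\iota_{B_y}\omega$ is a nonvanishing closed $1$-form whose kernel foliation is exactly the orbit foliation of $B_y$. For such a form the period subgroup $\{\int_c\lambda:c\in H_1(S_y;\ZZ)\}\subset\RR$ is either cyclic, in which case every leaf is a compact fibre of a fibration $S_y\to\Sone$, or of rank two, in which case every leaf is dense. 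The same construction applies verbatim to $J_y$ since $\Div J=0$.

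It remains to exclude the dense case and identify the homology class, and this is where the symmetry and Proposition \ref{prop:homology-action-characterisation} enter; I expect this to be the main point. Since the orbits of $\sigma B_y$ coincide as sets with those of $B_y$, the map $\psi$ preserves the (unoriented) orbit foliation, so $\psi^*\lambda=g\lambda$ with $g$ constant on leaves; in the dense case $g$ is constant, so $\psi_*$ preserves the irrational asymptotic line $\ker[\lambda]\subset H_1(S_y;\RR)$. But as $\cI$ is orientation-reversing, Proposition \ref{prop:homology-action-characterisation} gives $\sigma_1=-\sigma_2$, so $\psi_*$ is a reflection whose only invariant lines are the two rational eigenaxes $\cP_S\otimes\RR$ and $\cT_S\otimes\RR$ (here using that the indices on $S_y$ agree with those on $\partial M$ by nesting). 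This contradiction rules out dense orbits, so all orbits of $B_y$ are periodic, embedded and non-nullhomotopic, hence represent a primitive class $c$. Tracking orientations, a $B_y$-orbit maps under $\psi$ to a $\sigma B_y$-orbit, giving $\psi_*c=\sigma c$, so $c$ lies in the $\sigma$-eigenspace of $\psi_*$. When $\sigma=\sigma_1$ this eigenspace is $\cP_S$ and when $\sigma=\sigma_2$ it is $\cT_S$, and a primitive class lying in the direct summand $\cP_S\cong\ZZ$ (resp. $\cT_S$) generates it, giving the poloidal (resp. toroidal) conclusion for $B_y$. The identical argument with $\psi_*J_y=-\sigma J_y$ places the $J_y$-class in the $(-\sigma)$-eigenspace, yielding the complementary conclusion for $J_y$.
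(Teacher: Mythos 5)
Your proposal is correct, and its outer reductions coincide with the paper's: you derive the hidden symmetry $\Psi^*p=p$ and $\Psi_*J=-\sigma J$ (this is Remark \ref{rem:B-pres-implies-p-and-J-pres}, done there with differential forms rather than pseudovector identities), you transfer the poloidal/toroidal indices from $\partial M$ to $S_y$ (your parenthetical ``by nesting'' is exactly Lemma \ref{lem:sigma-toroidal-level-sets}, which in the paper requires the isotopy machinery of Appendix \ref{app:deferred-proofs-2}, so treat it as a real lemma, not a triviality), and you reduce to a nonvanishing area-preserving field on $S_y$. Where you genuinely diverge is the core dynamical step. The paper proves Proposition \ref{prop:periodic-orbits} via Kolmogorov's theorem, smoothly conjugating $X$ to $h(a\partial_x+b\partial_y)$, after which a cohomological wedge computation forces $ab(\sigma_1-\sigma_2)=0$, hence $ab=0$, and the sign identification is done by evaluating an auxiliary identity at a critical point. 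You instead work with the nonsingular closed $1$-form $\lambda=i_{B_y}\omega$ and invoke the compact/dense dichotomy for its kernel foliation: Tischler's theorem in the cyclic-period case, and in the rank-two case density of leaves, which follows from the transverse invariant measure defined by $\lambda$ together with circle dynamics of the return map (this is standard but should be cited or proved in a final write-up). Density is then excluded because $\psi_*$, having the distinct eigenvalues $\sigma_1=-\sigma_2$, preserves only the two rational eigenlines $\cP_{S_y}\otimes\RR$ and $\cT_{S_y}\otimes\RR$, whereas $\ker[\lambda]$ would be an irrational invariant line; in the compact case the flow-oriented fibre class $c$ is primitive and satisfies $\psi_*c=\sigma c$, which pins it to the correct summand and cleanly replaces the paper's critical-point sign argument. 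Your route avoids Kolmogorov's theorem entirely and is close in spirit to the paper's own generalisation in Section \ref{sec:sympectic-algebra} (which likewise exploits closedness of $i_X\mu$, the symplectic pairing on $H^1_{\text{dR}}$, and rationality of eigenlines, but concludes only the existence of one periodic orbit); your version is sharper, identifying all orbits and their homology class, at the cost of the two quoted facts noted above.
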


A novelty of Theorem \ref{thm:main} is that we do not assume that $\Psi \circ \Psi = \id$, and so the theory of reversible dynamical systems cannot be used directly to prove it. Instead, the proof of Theorem \ref{thm:main} only depends on the broad dynamical properties of MHD equilbria. This is evident from the fact that it is derived from the following more general result.

\begin{proposition}\label{prop:periodic-orbits}
Let $S \subset \RR^3$ be a 2-torus which is embedded in $\RR^3$. Let $\cI : \RR^3 \to \RR^3$ be an orientation-reversing diffeomorphism such that $\cI(S) = S$. Let $X$ be an area-preserving vector field on $S$; that is, $X$ satisfies that
\begin{equation*}
\cL_X \mu = 0,    
\end{equation*}
for some area-form $\mu \in \Omega^2(S)$. Assume that $X$ is non-vanishing and that, writing $\psi$ for the diffeomorphism on $S$ induced by $\cI$,
\begin{equation*}
\psi_*X = fX,    
\end{equation*}
for some smooth function $f \in C^{\infty}(S)$. Then, $f$ is either everywhere positive or everywhere negative. Let $\sigma_1,\sigma_2$ be the poloidal and toroidal index of $\cI$ on $S$. Then, the following holds.
\begin{enumerate}
    \item If $\sign(f) = \sigma_1$, then the orbits of $X$ are purely poloidal.
    \item If $\sign(f) = \sigma_2$, then the orbits of $X$ are purely toroidal.
\end{enumerate}
\end{proposition}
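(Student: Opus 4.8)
\emph{Proof plan.} The idea is to encode the orbits of $X$ as the leaves of a closed non-vanishing $1$-form, to exclude ``irrational'' (minimal) behaviour by a cohomological eigenvalue argument driven by the relation $\sigma_1=-\sigma_2$, and then to read off the homology class of the necessarily closed orbits from the sign of $f$. I begin with the sign statement: since $\psi$ is a diffeomorphism and $X$ is non-vanishing, $\psi_*X$ is non-vanishing, so $\psi_*X=fX$ forces $f$ to be nowhere zero, and as $S$ is connected $f$ has constant sign. Now set $\alpha\coloneqq\iota_X\mu$. By Cartan's formula and $d\mu=0$ we get $d\alpha=\cL_X\mu=0$, so $\alpha$ is closed; non-degeneracy of $\mu$ and $X\neq 0$ make $\alpha$ non-vanishing, while $\iota_X\alpha=0$ shows $\ker\alpha$ is the line field spanned by $X$. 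Hence the orbits of $X$ are exactly the leaves of $\ker\alpha$; moreover $[\alpha]\neq 0$ (else $\alpha=dG$ would have no zeros on the compact $S$), so the orbits are homologically essential. From $\psi_*X=fX$ the map $\psi$ preserves the line field $\ker\alpha$, so $\psi^*\alpha$ and $\alpha$ have the same kernel at every point; being non-vanishing $1$-forms on a surface they are proportional, $\psi^*\alpha=h\,\alpha$ with $h\in C^\infty(S)$ nowhere zero.

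The crux is to show the period group of $\alpha$ has rank one, equivalently that $\ker\alpha$ has only closed leaves rather than being minimal. Here I would invoke the classical dichotomy for closed non-vanishing $1$-forms on the $2$-torus: a rank-one period group corresponds to all leaves being closed, a rank-two period group to every leaf being dense. Since $\psi^*\alpha=h\alpha$ is closed, $0=d(h\alpha)=dh\wedge\alpha$, so $h$ is constant along each leaf. Were the foliation minimal, some leaf would be dense, forcing the continuous $h$ to be globally constant, whence $\psi^*[\alpha]=h[\alpha]$ in $H^1(S)$. Writing $a,b$ for generators of $\cP_S,\cT_S$ and using Proposition~\ref{prop:homology-action-characterisation} ($\psi_*a=\sigma_1 a$, $\psi_*b=\sigma_2 b$, with $\sigma_1=-\sigma_2$), the naturality $\langle\psi^*[\alpha],c\rangle=\langle[\alpha],\psi_*c\rangle$ applied to $a$ and $b$ yields $(\sigma_1-h)\langle[\alpha],a\rangle=0$ and $(\sigma_2-h)\langle[\alpha],b\rangle=0$. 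If both periods were non-zero we would get $h=\sigma_1=\sigma_2$, contradicting $\sigma_1=-\sigma_2$; hence one period vanishes, the period group has rank at most one, and minimality is excluded. Therefore every orbit of $X$ is closed, and since $X$ is non-vanishing no orbit is constant.

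It remains to identify the common class. With all orbits closed the leaf space of $X$ is a circle and $X$ fibres $S$ over it, so all orbits, oriented by $X$, represent a single primitive class $c_0\in H_1(S)$. As $\psi$ carries orbits to orbits and sends the positive tangent $X$ to $fX$, the image of an $X$-oriented orbit is traversed along its own $X$-orientation exactly when $f>0$; thus $\psi_*c_0=\sign(f)\,c_0$. Writing $c_0=m\,a+n\,b$ and comparing with $\psi_*c_0=\sigma_1 m\,a+\sigma_2 n\,b$ gives $(\sigma_1-\sign(f))m=0$ and $(\sigma_2-\sign(f))n=0$. Since $\sigma_1=-\sigma_2$, exactly one of $\sigma_1,\sigma_2$ equals $\sign(f)$: if $\sign(f)=\sigma_1$ then $n=0$, so $c_0$ generates $\cP_S$ and the orbits are purely poloidal; if $\sign(f)=\sigma_2$ then $m=0$ and the orbits are purely toroidal.

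I expect the main obstacle to be the rank-one step, i.e.\ ruling out minimal behaviour: it relies on pairing the constancy of $h$ along leaves with $\sigma_1\neq\sigma_2$, and on having the torus classification of closed $1$-forms available in the stated form. The orientation-matching identity $\psi_*c_0=\sign(f)\,c_0$ and the claim that all $X$-oriented orbits share one primitive class are conceptually straightforward but deserve careful justification.
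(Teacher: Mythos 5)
Your proposal is correct, but it follows a genuinely different route from the paper. The paper's proof rests on Kolmogorov's theorem: a non-vanishing area-preserving field on the 2-torus is smoothly conjugate to $h(a\partial_x+b\partial_y)$ with $h>0$, in coordinates chosen (after a $GL(2,\ZZ)$ adjustment) so that the coordinate circles generate $\cP_S$ and $\cT_S$; the symmetry then forces $ab=0$ via the wedge identity $\omega\wedge\psi^*\omega=0$ for $\omega=-b\,dx+a\,dy$ together with $\sigma_1\neq\sigma_2$, and the sign identification is made by evaluating $\psi^*dx=\sigma_1\,dx+dg$ on $X$ at a critical point of $g$. You avoid any smooth normal form and instead work with the dual closed $1$-form $\alpha=\iota_X\mu$: your heaviest external input is the classical dichotomy for foliations of $\TT^2$ defined by closed non-vanishing $1$-forms (rank-one period group $\Leftrightarrow$ fibration by closed leaves, rank-two $\Leftrightarrow$ every leaf dense). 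That dichotomy is indeed correct in the smooth category — a compact leaf is non-separating by an Euler-characteristic argument and has vanishing $\alpha$-period, forcing rank one and hence (by Tischler) a fibration, while the absence of compact leaves together with Denjoy--Schwartz theory, or the fact that foliations without holonomy on $\TT^2$ are topologically linear, gives minimality in the rank-two case — so your contradiction (a dense leaf makes $h$ in $\psi^*\alpha=h\alpha$ globally constant, and then naturality plus $\psi_*|_{\cP_S}=\sigma_1\id$, $\psi_*|_{\cT_S}=\sigma_2\id$, $\sigma_1=-\sigma_2$ kills a period) is sound, as are the standard facts you flag at the end (the fiber class of a fibration $S\to\Sone$ is primitive since it has an intersection-dual section, a null-homologous closed orbit is excluded by Poincar\'e--Hopf, and $\psi_*c_0=\sign(f)\,c_0$ because $\psi$ maps $X$-oriented orbits to $X$-oriented orbits up to the sign of $f$). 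Both arguments share the same engine — pair the $1$-form annihilating $X$ with its pullback and exploit $\sigma_1\neq\sigma_2$ — but they buy different things: Kolmogorov's theorem hands the paper an explicit linear model in which both closedness of orbits and the sign bookkeeping are direct computations, whereas your route uses a weaker, purely topological structure theorem plus Tischler's lemma and yields a cleaner, coordinate-free identification of the orbit class on homology. It is worth noting that your mechanism (integral periods via Lemma \ref{lem:circle-valued-characterisation} and Lemma \ref{lem:periodic-orbits-for-abstract}) is essentially the one the paper itself deploys in Section \ref{sec:sympectic-algebra} for its generalisation to abstract tori, although there the conclusion is only the existence of one periodic orbit rather than the full poloidal/toroidal classification you obtain.
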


Proposition \ref{prop:periodic-orbits} also has consequences in searching for well-behaved vacuum fields in the context of plasma physics. A vacuum field on $M$ is a smooth vector field $h$ satisfying
\begin{equation}\label{eq:harmonic-equation}
\curl h = 0, \qquad \Div h = 0, \qquad h \cdot n = 0.    
\end{equation}
As well-known \cite{Pfefferle_Noakes_Perrella_2021}, if $M$ is diffeomorphic to a solid torus, the vector space of solutions to Equation \eqref{eq:harmonic-equation} is 1-dimensional. We have the following result about the boundary behaviour of vacuum fields on solid tori.

\begin{corollary}\label{cor:vac-is-periodic}
Let $M \subset \RR^3$ be a solid toroidal domain and let $\cI : \RR^3 \to \RR^3$ be an orientation-reversing isometry such that $\cI(M) = M$. Let $h$ be a non-trivial solution to Equation \eqref{eq:harmonic-equation}. Consider the vector field $X$ on $\partial M$ induced by $h$ via the inclusion $\partial M \subset M$ (which exists because $h \cdot n = 0$). Then, either $X$ is not area-preserving on $\partial M$, or the orbits of $X$ are purely toroidal. 
\end{corollary}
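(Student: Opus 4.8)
The plan is to reduce the statement to Proposition \ref{prop:periodic-orbits} applied to $X = h|_{\partial M}$ on the $2$-torus $S = \partial M$. The first task is to produce the hypothesis $\psi_* X = fX$. Since $\cI$ is an isometry with $\cI(M)=M$, the pushforward $\Psi_* h$ is again curl-free, divergence-free, and tangent to $\partial M$: the condition $h^\flat$ closed is preserved because $d$ commutes with $(\Psi^{-1})^*$, the divergence-free condition is preserved because $\cI$ preserves the Riemannian volume up to sign, and tangency is preserved because $\Psi(\partial M)=\partial M$. Hence $\Psi_* h$ solves \eqref{eq:harmonic-equation}, and by the one-dimensionality of that solution space we get $\Psi_* h = \lambda h$ for some $\lambda\in\RR$. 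As $\cI$ preserves the $L^2$-norm of vector fields, $|\lambda|\,\|h\|_{L^2} = \|\Psi_* h\|_{L^2} = \|h\|_{L^2}$, and since $h$ is non-trivial, $\lambda = \sigma\in\{+1,-1\}$. Restricting to $S$ gives $\psi_* X = \sigma X$, so Proposition \ref{prop:periodic-orbits} will apply with the constant $f=\sigma$, provided $X$ is non-vanishing.

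If $X$ is not area-preserving there is nothing to prove, so assume it is. The hard part is exactly to deduce that such an $X$ is non-vanishing, which is what is needed to enter Proposition \ref{prop:periodic-orbits}. Here I would combine the two first-order conditions $X$ satisfies on $S$. From $\curl h = 0$ the $1$-form $X^\flat = \jmath^* h^\flat$ (flat in the induced metric, $\jmath : S\hookrightarrow M$) is closed, since $h^\flat$ is closed on $M$ and $d$ commutes with $\jmath^*$. From area-preservation, $\iota_X\mu$ is closed for the invariant area form $\mu$; writing $\mu = \rho\,\mu_g$ with $\rho>0$ and $\mu_g$ the Riemannian area form, and using $\iota_X\mu_g = \star X^\flat$ on the surface, this says $\rho\star X^\flat$ is closed. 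In isothermal coordinates these become $X^\flat = dF$ locally together with $\nabla\!\cdot(\rho\nabla F)=0$, so $X$ is, up to a positive conformal factor, the gradient of a solution of a divergence-form elliptic equation, whose critical points all carry strictly negative Poincar\'e--Hopf index, exactly as for harmonic functions (the local model being $\Real z^k$, $k\ge 2$). Moreover $X\not\equiv 0$: otherwise $\oint_\ell h^\flat = 0$ for a longitude $\ell\subset S$ generating $H_1(M)$, forcing $[h^\flat]=0$ in $H^1(M;\RR)\cong\RR$, hence $h^\flat$ exact and, by the Neumann problem $\Delta\phi=0,\ \partial_n\phi=0$, $h=0$, a contradiction. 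Then Poincar\'e--Hopf on $S$ gives a sum of indices equal to $\chi(T^2)=0$, incompatible with any (necessarily negative-index) zero. Therefore $X$ is nowhere vanishing.

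With non-vanishing in hand, Proposition \ref{prop:periodic-orbits} applies. Since $\cI$ is orientation-reversing we have $\sigma_1=-\sigma_2$, so $\{\sigma_1,\sigma_2\}=\{+1,-1\}$ and $\sigma$ equals exactly one of them, giving orbits that are either purely poloidal or purely toroidal. It remains to exclude the poloidal case. If the orbits were purely poloidal, a periodic orbit $\gamma$ of period $T$ would generate $\cP_S=\ker{\jmath_{\cB_S}}_*$ and hence be null-homologous in $\overline{\cB_S}=M$; as $h^\flat$ is closed, Stokes' theorem forces $\oint_\gamma h^\flat = 0$. But along the orbit $\dot\gamma = X = h$, so $\oint_\gamma h^\flat = \int_0^T |h(\gamma(t))|^2\,dt > 0$ by non-vanishing, a contradiction. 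Hence $\sigma=\sigma_2$ and the orbits of $X$ are purely toroidal. The only genuinely delicate point is the non-vanishing of $X$; everything else is a direct assembly of the isometry-invariance of the vacuum field, Proposition \ref{prop:periodic-orbits}, and a one-line circulation estimate.
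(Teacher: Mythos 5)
Your proposal is correct, but it reaches the conclusion by a genuinely different route than the paper at two key points. First, the sign of the eigenvalue: the paper never leaves it undetermined. It uses Proposition \ref{prop:toroidal-index-on-solid-torus} (the action of $\Psi$ on $H_1(M)$ is $\sigma_2\,\id$), dualizes to $\Psi^* = \sigma_2\,\id$ on $H^1_{\text{dR}}(M)$, and combines this with the isomorphism $\cH_N^\Gamma(M) \cong H^1_{\text{dR}}(M)$, $H \mapsto [H^\flat]$, to conclude $\Psi_* h = \sigma_2 h$ outright; Proposition \ref{prop:periodic-orbits}, case (ii), then immediately yields purely toroidal orbits. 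You instead get $\Psi_* h = \sigma h$ with $\sigma \in \{+1,-1\}$ unidentified (via one-dimensionality of the solution space plus $L^2$-invariance, which is also fine), so Proposition \ref{prop:periodic-orbits} only tells you the orbits are purely poloidal or purely toroidal, and you must exclude the poloidal case a posteriori. Your circulation argument for this is clean and correct: a poloidal orbit is null-homologous in $\overline{\cB_S} = M$, so $\oint_\gamma h^\flat = 0$ by closedness of $h^\flat$, while $\oint_\gamma h^\flat = \int_0^T \|h(\gamma(t))\|^2\,dt > 0$ once $X$ is known to be nowhere zero. This buys elementarity and would work even in settings where the cohomological action is awkward to compute, at the cost of not producing the sign information that the paper's machinery also feeds into Theorem \ref{thm:main} and Proposition \ref{prop:basic-rigidity}. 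Second, the non-vanishing of $X$: the paper simply cites two known results ($h|_{\partial M}=0 \Rightarrow h=0$ for non-triviality, and the dichotomy that a $P$-harmonic $1$-form on a surface is either identically zero or nowhere zero), whereas you re-derive both — non-triviality via a Neumann/cohomology argument (which works, using that a curve in $\partial M$ generates $H_1(M)$, cf.\ Lemma \ref{lem:isomorphism-via-inclusions}), and the dichotomy via divergence-form elliptic theory plus Poincar\'e--Hopf. The only soft spot is there: your index argument tacitly assumes the zeros of $X$ are isolated, which for solutions of $\nabla\cdot(\rho\nabla F)=0$ requires the similarity principle (Bers--Vekua) or an equivalent unique-continuation statement; that is precisely the content of the lemma the paper cites, so your sketch is re-proving it rather than bypassing it, and should either cite that theory or the paper's reference.
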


In particular, in Corollary \ref{cor:vac-is-periodic}, there cannot exist a smooth $f > 0$ and a diffeomorphism $(\theta,\varphi) : S \to (\RR/2\pi\ZZ)^2$ for which
\begin{equation*}
X/f = a\partial_{\theta} + b\partial_\varphi,     
\end{equation*}
with $a,b \in \RR$ rationally independent. In terms of plasma physics terminology, $S = \partial M$ cannot be an irrational surface for $h$. 

As a by-product, Proposition \ref{prop:homology-action-characterisation} also derives a certain basic rigidity result of MHD equilibria and more general vector fields on solid toroidal domains.

\begin{proposition}\label{prop:basic-rigidity}
Let $M \subset \RR^3$ be a solid toroidal domain. Suppose that $B$ is a vector field on $M$ and $\rho$ is a smooth function on $M$ such that
\begin{equation*}
\Div B = 0, \qquad B \cdot \nabla \rho = 0 = \curl B \cdot \nabla \rho.    
\end{equation*}
Suppose further that $\rho$ has toroidally nested level sets, $\cI : \RR^3 \to \RR^3$ is an isometry such that $\cI(M) = M$, and
\begin{equation*}
\Psi^*\rho = \rho,    
\end{equation*}
where $\Psi : M \to M$ denotes the corresponding isometry of $M$. Then, the following holds.
\begin{enumerate}
    \item If $\cI$ is orientation-preserving, then
    \begin{equation*}
    \Psi_*B = \sigma B,    
    \end{equation*}
    where $\sigma = \sigma_1 = \sigma_2$ is the mutual poloidal and toroidal index of $\cI$ on $\partial M$.
    \item If $\cI$ is orientation-reversing, then
    \begin{equation*}
    (\Psi\circ \Psi)_* B = B.    
    \end{equation*}
\end{enumerate}
\end{proposition}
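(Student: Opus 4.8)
The plan is to reduce the pointwise identities to a vanishing statement on each regular level torus $S_y = \rho^{-1}(y)$, upgrading the purely homological rigidity supplied by Proposition~\ref{prop:homology-action-characterisation} to an \emph{exact} equality by means of a one-line energy identity. Throughout, write $\omega$ for the Euclidean volume form and fix a regular value $y$ in the image of $\rho$, so that $S_y$ is an embedded $2$-torus. First I would record two structural facts on $S_y$. Choosing any $N$ with $d\rho(N)=1$, the restriction $\mu \coloneqq (\iota_N\omega)|_{S_y}$ is a well-defined area form on $S_y$ (independent of $N$, since any tangential change to $N$ contributes nothing upon restriction). The hypotheses $\Div B = 0$ and $B\cdot\nabla\rho=0$ then force $\cL_{B_y}\mu = 0$ for the induced field $B_y$: one computes $\cL_{B_y}\mu = (\iota_{[B,N]}\omega)|_{S_y}$, and $[B,N]$ is tangent to $S_y$ because $[B,N]\rho = B(d\rho(N)) - N(d\rho(B)) = 0$, so the restriction vanishes. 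Dually, $\curl B\cdot\nabla\rho=0$ makes $B^\flat|_{S_y}$ a \emph{closed} $1$-form, since $d\bigl(B^\flat|_{S_y}\bigr) = (\iota_{\curl B}\,\omega)|_{S_y} = 0$; thus $B$ carries a well-defined circulation class $c(B)\coloneqq[B^\flat|_{S_y}]\in H^1(S_y;\RR)$.

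Since $\cI$ is an isometry with $\Psi^*\rho=\rho$, the map $\Psi$ preserves each $S_y$, and $\Psi_* B$ is again divergence-free and tangent to the $S_y$; in the orientation-preserving case $\curl\Psi_* B = \Psi_*\curl B$ is tangent as well, so $\Psi_* B$ satisfies all of the above and $c(\Psi_* B)$ is defined. For part~(i) I would next check that the indices $\sigma_1,\sigma_2$ of $\cI$ on $S_y$ do not depend on $y$ and coincide with those on $\partial M$: the toroidal index records the sign by which $\Psi$ acts on $H_1$ of the common core circle $\Gamma$ (through the isomorphism ${\jmath_{\cB_{S_y}}}_*|_{\cT_{S_y}}$ of the Remark), which is intrinsic to $\Gamma$, and the poloidal index is then fixed by the relation $\sigma_1=\sigma_2$ imposed by orientation-preservation. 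Hence, with $\sigma\coloneqq\sigma_1=\sigma_2$, one has $\psi_{y*}=\sigma\,\id$ on all of $H_1(S_y;\RR)$, and dually on $H^1(S_y;\RR)$, so naturality of $\flat$ and of restriction under the isometry $\psi_y=\Psi|_{S_y}$ gives $c(\Psi_* B) = \sigma\,c(B)$ on every regular $S_y$.

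Now set $V\coloneqq\Psi_* B - \sigma B$. By the above, $V$ is divergence-free, tangent to the $S_y$, has $\curl V$ tangent to the $S_y$, and has vanishing circulation class $c(V)=0$ on every regular torus; in particular $V_y^\flat = dg$ for some $g\in C^\infty(S_y)$, while $\iota_{V_y}\mu$ is closed because $V_y$ is area-preserving. The crux is then the identity
\begin{equation*}
\int_{S_y} |V_y|^2\,\mu = \int_{S_y} V_y^\flat\wedge\iota_{V_y}\mu = \int_{S_y} dg\wedge\iota_{V_y}\mu = \int_{S_y} d\bigl(g\,\iota_{V_y}\mu\bigr) = 0,
\end{equation*}
which uses the pointwise relation $V_y^\flat\wedge\iota_{V_y}\mu = |V_y|^2\mu$, then $V_y^\flat=dg$, then $d(\iota_{V_y}\mu)=0$, and finally Stokes' theorem on the closed surface $S_y$. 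Therefore $V_y\equiv 0$ on each regular torus; as these tori are dense in $M$ and $V$ is continuous, $V\equiv 0$, i.e.\ $\Psi_* B = \sigma B$, proving~(i). For part~(ii) I would apply~(i) to the orientation-preserving isometry $\cI^2$: by functoriality of the induced homology action its poloidal and toroidal indices are $\sigma_1^2 = \sigma_2^2 = 1$, so (i) yields $(\Psi\circ\Psi)_* B = B$ at once.

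I expect the main obstacle to be precisely the energy-identity step — the passage from the homological conclusion $c(V)=0$ to the pointwise vanishing $V\equiv 0$ — since it is this argument that makes the rigidity exact rather than merely valid up to homology; the supporting claims that the poloidal and toroidal indices are constant across the nested family and that circulation classes transform through $\psi_y$ will also require careful justification.
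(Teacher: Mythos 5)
Your proposal follows the paper's overall skeleton --- work torus by torus, show the poloidal/toroidal indices of $\cI$ on each regular level torus $S_y$ agree with those on $\partial M$, deduce that $\psi_y$ acts by $\sigma\,\id$ on $H^1_{\text{dR}}(S_y)$, conclude a pointwise identity on each $S_y$, and finish by density of the regular tori --- but your mechanism for the crucial pointwise step is genuinely different. The paper invokes the theory of $P$-harmonic $1$-forms from \cite{Perrella22}: $b_y=\iota_y^*B^\flat$ is the unique solution of $db_y=0$, $\delta P_y b_y=0$ in its cohomology class, pullback by $\psi_y$ preserves $P_y$-harmonicity because $\psi_y$ is an isometry with $P_y\circ\psi_y=P_y$, and uniqueness forces $\psi_y^*b_y=\sigma b_y$. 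You instead form the difference field $V=\Psi_*B-\sigma B$, note it inherits all structural hypotheses and has exact restriction $V_y^\flat=dg$, and kill it by
\begin{equation*}
\int_{S_y}|V_y|^2\,\mu=\int_{S_y}dg\wedge\iota_{V_y}\mu=\int_{S_y}d\bigl(g\,\iota_{V_y}\mu\bigr)=0,
\end{equation*}
which is valid: the pointwise identity $V_y^\flat\wedge\iota_{V_y}\mu=|V_y|^2\mu$ holds for \emph{every} area form $\mu$ (both sides are $C^\infty(S_y)$-linear in $\mu$ and the identity holds for the Riemannian one), and $d\iota_{V_y}\mu=0$ is exactly area-preservation. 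In effect you have inlined a self-contained proof of the uniqueness half of the $P$-harmonic theory, needing neither existence of harmonic representatives nor the equivariance $\psi_y^*\kappa_C=\kappa_{\psi_y^*C}$; this is more elementary, whereas the paper's route reuses machinery it needs anyway (e.g.\ in Corollary \ref{cor:vac-is-periodic}). Your reduction of (ii) to (i) via $\cI\circ\cI$ is identical to the paper's.

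The genuine gap is in the step you treat most lightly, not the one you flagged. Constancy of the indices across the nested family is where the paper does its real work: Lemma \ref{lem:sigma-toroidal-level-sets}, resting on Lemma \ref{lem:diffeomorphism-action-compatibility} and Appendix \ref{app:deferred-proofs-2}, proves it by a gradient flowout (producing ambient isotopies of nearby level tori) plus an open-closed argument on $\RegImag(\rho)$. Your substitute --- that the toroidal index is read off from the action of $\Psi_*$ on $H_1(\gamma)$ of the axis, via the isomorphism ${\jmath_{\cB_{S_y}}}_*|_{\cT_{S_y}}$ --- presupposes two unproven facts: that $\overline{\cB_{S_y}}$ is precisely the region of $M$ bounded between $\gamma$ and $S_y$, so that it contains $\gamma$ and is $\Psi$-invariant; and, more seriously, that the inclusion $\gamma\subset\overline{\cB_{S_y}}$ induces an isomorphism on $H_1$. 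Definition \ref{def:toroial-lvl-sets} imposes no Morse--Bott structure at $\gamma$, so the latter is not automatic: one would argue, say, that $\rho$ is a proper submersion with connected torus fibres over the open interval of regular values, hence a trivial bundle by Ehresmann, giving $\overline{\cB_{S_y}}\setminus\gamma\cong(\Sone\times\Sone)\times(0,1]$, and then run an excision/tubular-neighbourhood argument to see that $\gamma$ is a core. That is work comparable to the paper's own appendix. So the energy identity you expected to be the main obstacle is solid, while the index-constancy sketch is the part that must be completed --- either along the lines above or simply by the argument of Lemma \ref{lem:sigma-toroidal-level-sets}.
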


Proposition \ref{prop:basic-rigidity} couples with the fact that the toroidal index can be easily computed in a number of examples, as is discussed in Section \ref{sec:example-comp}. For instance, Proposition \ref{prop:basic-rigidity} implies a direct rigidity statement about Stellarator symmetry \cite{Dewar_Hudson_1998}. Stellarator symmetry, up to rotations and translations, is symmetry with respect to the isometry of $\RR^3$ given by $(x,y,z) \mapsto (x,-y,-z)$. The precise rigidity statement is the following.

\begin{corollary}\label{cor:Stellarator-symmetry}
Let $\cZ = \myspan\{(0,0,1)\}$ denote the $z$-axis of $\RR^3$. Let $M$ be a solid torus in $\RR^3$ which wraps around $\cZ$ once, in the sense that
\begin{enumerate}
    \item $M \cap \cZ = \emptyset$ and
    \item letting $\iota : M \subset \RR^3 \setminus \cZ$ denote the inclusion, $\iota_* : H_1(M) \to H_1(\RR^3 \setminus \cZ)$ is surjective.
\end{enumerate}
Let $\cI : \RR^3 \to \RR^3$ be the orientation-preserving isometry given by
\begin{equation*}
\cI(x,y,z) = (x,-y,-z)  
\end{equation*}
and suppose that $\cI(M) = M$. Let $\Psi : M \to M$ denote the corresponding diffeomorphism on $M$. Then, for any MHD equilibrium $(B,p)$ on $M$ where $p$ has toroidally nested level sets, one has the equivalence
\begin{equation}\label{eq:stell-sym}
p \circ \Psi = p \iff \Psi_* B = - B.  
\end{equation}
\end{corollary}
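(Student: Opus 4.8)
The plan is to derive Corollary \ref{cor:Stellarator-symmetry} from the already-established structural results, principally Proposition \ref{prop:basic-rigidity} and the index computation promised for examples of this type. First I would verify the hypotheses needed to invoke Proposition \ref{prop:basic-rigidity}: the isometry $\cI(x,y,z) = (x,-y,-z)$ is orientation-preserving (its matrix has determinant $(+1)(-1)(-1) = +1$), the pressure $p$ plays the role of the nested function $\rho$, and the MHD equilibrium equations $\curl B \times B = \nabla p$, $\Div B = 0$ together with $B \cdot \nabla p = 0$ furnish $\Div B = 0$, $B \cdot \nabla p = 0$, and $\curl B \cdot \nabla p = 0$ (the last because $\curl B \times B = \nabla p$ forces $\curl B$ tangent to level sets of $p$). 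Thus the standing hypotheses of the rigidity proposition are met once we know $\Psi^*p = p$.

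The forward direction $(\Rightarrow)$ is then immediate from part (i) of Proposition \ref{prop:basic-rigidity}: assuming $p\circ\Psi = p$, the proposition yields $\Psi_*B = \sigma B$ where $\sigma = \sigma_1 = \sigma_2$ is the common index of the orientation-preserving $\cI$ on $\partial M$. What remains is to pin down that this common index is $-1$ rather than $+1$, and this is where the geometric hypotheses (i)--(ii) on $M$ enter. The map $\cI$ is a rotation by $\pi$ about the $x$-axis; the hypothesis that $M$ wraps once around $\cZ$ (so that $\iota_* : H_1(M) \to H_1(\RR^3\setminus\cZ) \cong \ZZ$ is surjective) means a toroidal generator of $\partial M$ maps to a generator of $H_1(\RR^3\setminus\cZ)$. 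The involution $\cI$ reverses orientation of loops winding around $\cZ$, so I expect $\Psi$ to send the toroidal class to its negative, giving toroidal index $\sigma_2 = -1$, hence $\sigma = -1$ and $\Psi_*B = -B$. I would make this winding-number computation precise using the isomorphism ${\jmath_{\cB_S}}_*|_{\cT_S} : \cT_S \xrightarrow{\sim} H_1(\overline{\cB_S}) = H_1(M)$ from the Remark after Proposition \ref{prop:homology-action-characterisation}, composed with $\iota_*$, and tracking how $\cI$ acts.

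For the reverse direction $(\Leftarrow)$, suppose $\Psi_*B = -B$; I must deduce $p\circ\Psi = p$. The natural route is to argue that $\Phi \coloneqq p\circ\Psi$ is \emph{also} a pressure function making $(B,\Phi)$ an equilibrium, so that $\nabla\Phi$ and $\nabla p$ are parallel on each level torus; since $\Psi$ permutes the nested tori of $p$, it must preserve each one (as it fixes $\partial M$ and the nesting is totally ordered), forcing $\Phi = p$ up to reparametrisation, and then matching values on a single torus closes the argument. Concretely, applying $\Psi_*$ to $\curl B \times B = \nabla p$ and using $\Psi_*B = -B$ together with the pullback behaviour of $\curl$ under the orientation-preserving isometry $\cI$ should give $\curl B \times B = \nabla(p\circ\Psi)$, whence $\nabla(p\circ\Psi) = \nabla p$ and the two functions differ by a constant that vanishes on $\partial M$.

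The main obstacle I anticipate is the index computation $\sigma_2 = -1$ in the forward direction: it requires translating the topological ``wraps once around $\cZ$'' condition into a concrete statement about how the $\pi$-rotation $\cI$ acts on the toroidal generator, and verifying that the surjectivity hypothesis genuinely forces the nontrivial sign rather than permitting $\sigma_2 = +1$. I would isolate this as a short lemma, phrased as a linking/winding-number calculation: the toroidal loop links $\cZ$ once, $\cI$ reverses that linking, and since the toroidal index records exactly the action on $\cT_S \cong H_1(M)$, the sign is determined. The remaining pieces are routine given Proposition \ref{prop:basic-rigidity}.
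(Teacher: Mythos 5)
Your proposal is correct and follows essentially the same route as the paper: the forward direction via Proposition \ref{prop:basic-rigidity} combined with the toroidal-index computation $\sigma_2 = -1$ (obtained, exactly as the paper does in Section \ref{sec:example-comp}, by tracking the action of $\cI$ on $H_1(\RR^3\setminus\cZ)$ through the isomorphisms $\cT_{\partial M} \cong H_1(M) \cong H_1(\RR^3\setminus\cZ)$ furnished by Proposition \ref{prop:toroidal-index-on-solid-torus} and the wrapping hypothesis), and the reverse direction by applying the symmetry to the force-balance equation to get $\nabla(p\circ\Psi)=\nabla p$, which is precisely the content of Remark \ref{rem:B-pres-implies-p-and-J-pres}. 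The only cosmetic difference is how the additive constant is eliminated: you match values on the invariant level set $\partial M$, while the paper compares images of $p$ and $\Psi^*p$; both are valid.
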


\begin{remark}
In Corollary \ref{cor:Stellarator-symmetry}, the implication $\Leftarrow$ is well-known in plasma physics and is independent of the toroidally nested property (see \cite[Appendix B.3]{Pfefferle_2015}). If $x,y,z  :M \to \RR$ denote the Cartesian coordinates on $M$, and $B = B^1\partial_x + B^2\partial_y + B^3\partial_z$, then Equation \eqref{eq:stell-sym} reads that
\begin{align*}
p(x,-y,-z) &= p(x,y,z)\\
&\Updownarrow\\
B^1(x,-y,-z) &= B^1(x,y,z),\\
B^2(x,-y,-z) &= -B^2(x,y,z),\\
B^3(x,-y,-z) &= -B^3(x,y,z).
\end{align*}
\end{remark}

As a final comment, Theorem \ref{thm:main} does not mention the behaviours of orbits near the axis of the MHD equilibrium $(B,p)$. It is plausible that degenerate behaviour may occur in some cases. For instance, perhaps on the axis of $p$, there could be some points at which $B$ is zero and other places where it is not. We show that this cannot happen if $p$ is assumed to be Morse-Bott.

\begin{proposition}\label{prop:nice-axis-behaviour}
Let $M$ be an oriented Riemannian 3-manifold with boundary and $(B,p)$ be an MHD equilibrium on $M$. Let $\gamma \subset \Int M$ be an embedded circle which is also a Morse-Bott critical submanifold for $p$. Then, setting $J = \curl B$, the following holds.
\begin{enumerate}
    \item If $p$ is locally maximised on $\gamma$, then either \label{item:phys-rel}
    \begin{enumerate}
    \item $B|_\gamma = 0$ and $J|_\gamma$ has no zeros, or
    \item $B|_\gamma$ has no zeros and $J|_\gamma = k B|_\gamma$ for some $k \in \RR$. \label{item:phys-rel-b}
    \end{enumerate}
    \item If $p$ is locally minimised on $\gamma$, then $B|_\gamma$ has no zeros and $J|_\gamma = k B|_\gamma$ for some $k \in \RR$.
\end{enumerate}     
\end{proposition}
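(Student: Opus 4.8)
The plan is to extract two algebraic identities from the equilibrium equations and then localise them to $\gamma$. Dotting $J\times B=\nabla p$ (with $J=\curl B$) with $B$ and then with $J$ gives $B\cdot\nabla p=0$ and $J\cdot\nabla p=0$, so $B$ and $J$ are tangent to the regular level sets of $p$. Taking the curl of $J\times B=\nabla p$, and using $\Div B=\Div J=0$ together with the identity $\curl(J\times B)=(B\cdot\nabla)J-(J\cdot\nabla)B$, yields the commutation relation
\[
[B,J]=0\qquad\text{on } M .
\]
These two facts, together with the Morse--Bott hypothesis, should supply the entire statement.

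First I would show that $B$ and $J$ are tangent to $\gamma$. Writing $p_0=p|_\gamma$, the assumption that $\gamma$ is a nondegenerate critical circle on which $p$ is extremised provides a tubular neighbourhood $U$ with $\{p=p_0\}\cap U=\gamma$. Since the flow of $B$ preserves $p$ it preserves $\{p=p_0\}$, and by compactness of $\gamma$ it carries $\gamma$ into $\{p=p_0\}\cap U=\gamma$ for small time; hence $\gamma$ is invariant and $B$ is tangent to it, and the same holds for $J$. Fixing a nowhere-zero field $\partial_\theta$ on $\gamma\cong\Sone$, I may then write $B|_\gamma=\beta\,\partial_\theta$ and $J|_\gamma=\iota\,\partial_\theta$ with $\beta,\iota\in C^\infty(\gamma)$, so that the proposition becomes a statement about the pair $(\beta,\iota)$.

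Next I would prove that $(\beta,\iota)$ never vanishes and that it always points in a fixed direction. For the non-vanishing, if $B$ and $J$ both vanished at some $x_0\in\gamma$ then smoothness gives $|J\times B|=O(\dist(\cdot,x_0)^2)$, while nondegeneracy of the transverse Hessian gives $|\nabla p|\gtrsim\dist(\cdot,\gamma)$; moving off $\gamma$ along the normal at $x_0$ makes the two distances agree, forcing $\dist\lesssim\dist^2$ there, a contradiction. For the direction, invariance of $\gamma$ lets me restrict $[B,J]=0$ to $\gamma$, where it reads $[\beta\,\partial_\theta,\iota\,\partial_\theta]=(\beta\iota'-\iota\beta')\,\partial_\theta=0$; since $(\beta,\iota)$ is nowhere zero, the unit vector $(\beta,\iota)/\sqrt{\beta^2+\iota^2}$ then has vanishing derivative and is a constant $(\cos\phi_0,\sin\phi_0)$ on the connected curve $\gamma$. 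Either $\cos\phi_0=0$, giving $\beta\equiv0$ with $\iota$ nowhere zero (so $B|_\gamma=0$ and $J|_\gamma$ is zero-free), or $\cos\phi_0\neq0$, giving $\beta$ nowhere zero and $\iota=(\tan\phi_0)\,\beta=k\beta$ with $k$ constant (so $B|_\gamma$ is zero-free and $J|_\gamma=kB|_\gamma$). This settles the local-maximum case.

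Finally I would rule out the vanishing branch $\beta\equiv0$ when $p$ is locally minimised --- the only place the type of extremum enters. Assuming $\beta\equiv0$ (so $B|_\gamma=0$ and $J|_\gamma\neq0$), applying the divergence to $\nabla p=J\times B$ and using $\Div(J\times B)=B\cdot\curl J-|J|^2$ gives, on $\gamma$,
\[
\Delta p|_\gamma=-|J|^2|_\gamma<0 .
\]
Since $p$ is constant on $\gamma$, its Hessian restricted to $T\gamma$ vanishes, so $\Delta p|_\gamma$ equals the trace of the transverse Hessian of $p$; thus that Hessian has negative trace and cannot be positive definite, contradicting the local-minimum hypothesis. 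Hence in the minimum case $\beta\not\equiv0$, which by the previous paragraph forces $B|_\gamma$ zero-free and $J|_\gamma=kB|_\gamma$. The main obstacle is precisely this separation of maximum from minimum together with the constancy of $k$: the ratio $\iota/\beta$ is not manifestly constant pointwise and genuinely relies on the global relation $[B,J]=0$, while the extremum type only surfaces through the sign of $\Delta p|_\gamma=\tr\nabla^2 p|_\gamma$, whose identification with $-|J|^2$ on the axis is the crux.
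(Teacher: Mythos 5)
Your proof is correct, and although it follows the paper's overall skeleton --- tangency of $B$ and $J$ to $\gamma$, a pointwise statement that $B$ and $J$ never vanish simultaneously on $\gamma$, the commutation $[B,J]=0$ upgraded to a dichotomy along the circle, and the identity $\Delta p = \curl J\cdot B - |J|^2$ --- the two central steps are carried out by genuinely different arguments. The paper derives the non-simultaneous vanishing from the sign of $\Delta p = \tr\hess(p)$ on $\gamma$ (its Lemma \ref{lem:implication-of-zeros}), so that step leans on the extremum hypothesis in both cases; you derive it by comparing orders of vanishing, namely $|\nabla p| = |J\times B| = O(\dist(\cdot,x_0)^2)$ against the Morse--Bott lower bound $|\nabla p|\gtrsim \dist(\cdot,\gamma)$ along the normal at $x_0$, which uses no extremum hypothesis at all. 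As a consequence your argument actually yields the case (i) dichotomy for an \emph{arbitrary} Morse--Bott critical circle (saddle-type included), with the extremum type entering only to exclude the branch $B|_\gamma\equiv 0$ in the minimum case --- a mild strengthening of the proposition. The one caveat is that your tangency step (flow-invariance of $\{p=p_0\}\cap U = \gamma$) is itself max/min-specific, since for a saddle circle the level set is strictly larger than $\gamma$ near $\gamma$; to realise the strengthening cleanly you would swap in the paper's tangency argument (Lemma \ref{lem:decend}), which places $B|_x$ and $J|_x$ in $\ker\hess(p)|_x = T_x\gamma$ and works for any Morse--Bott circle. For the dichotomy itself, the paper proves an abstract lemma on commuting vector fields on a connected 1-manifold via an open-closed argument with the flow (Lemma \ref{lem:commuting-1dim-fields}), whereas you compute the Wronskian $\beta\iota'-\iota\beta'=0$ and conclude that the unit vector $(\beta,\iota)/\sqrt{\beta^2+\iota^2}$ is constant; the two are equivalent in content, but your formulation correctly sidesteps the pitfall that the ratio $\iota/\beta$ is only defined where $\beta\neq 0$, while the paper's version is packaged as a reusable lemma and uses no choice of parametrisation of $\gamma$.
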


Proposition \ref{prop:nice-axis-behaviour} follows from considering the Laplacian $\Delta p$ on $\gamma$, leveraging also the fact that $[J,B] = 0$, where $[\cdot,\cdot]$ denotes the Lie bracket. The most physically relevant case for magnetic confinement fusion devices \cite{Ongena_Koch_Wolf_Zohm_2016} in Proposition \ref{prop:nice-axis-behaviour} is case \ref{item:phys-rel-b}.

The remainder of this paper is structured as follows. In Section \ref{sec:poloidal-and-toroidal-homologies}, we review the Jordan-Brower Separation Theorem and a homological splitting which includes that of Equation \eqref{eq:split-poloidal-and-toroidal}. This is used to prove Proposition \ref{prop:homology-action-characterisation}. In Section \ref{sec:dyn-prop-and-cor}, we prove Proposition \ref{prop:periodic-orbits} and Corollary \ref{cor:vac-is-periodic}. After this, in Section \ref{sec:extension-to-nested-toroidal-surfaces}, we revisit the homology construction from Section \ref{sec:poloidal-and-toroidal-homologies} and show that the poloidal and toroidal indices of a diffeomorphism on invariant 2-tori in a trivial foliation do not vary across the tori. This enables a proof of Proposition \ref{prop:basic-rigidity}. Then, in Section \ref{sec:proof-main-res} we prove the main result. In Section \ref{sec:example-comp} we provide some examples in which the toroidal index of a diffeomorphism can be easily computed, and prove Corollary \ref{cor:Stellarator-symmetry}. In Section \ref{sec:magnetic axes}, we prove Proposition \ref{prop:nice-axis-behaviour}. Lastly, in Section \ref{sec:sympectic-algebra}, we mention a generalisation of Proposition \ref{prop:periodic-orbits} in the setting of abstract 2-tori.

\section{Action on homology by a diffeomorphism in \texorpdfstring{$\RR^n$}{Rn} on an invariant hypersurface}\label{sec:poloidal-and-toroidal-homologies}

The main purpose of this section is to prove Proposition \ref{prop:homology-action-characterisation}. The exposition will be more general than the 2-torus. To begin, we state a detailed version of the Jordan-Brower Separation Theorem (see \cite{GP,Lima}).

\begin{theorem}[Jordan-Brower Separation]\label{thm:detailed-separation}
Let $S$ be a closed connected codimension 1 submanifold $S$ of $\RR^n$. Then $\RR^n \setminus S$ has two connected components. One of the components is a bounded open set, the other is an unbounded open set, and both of their topological boundaries are $S$. Letting $\cB_S$ and $\cE_S$ denote respectively the bounded and unbounded components, there exists a map $\Sigma : S \times (-1,1) \to \RR^n$ which is a diffeomorphism onto its image such that $\Sigma(x,0) = x$ for $x \in S$ and
\begin{equation*}
\Sigma(S \times (-1,0)) \subset \cB_S, \qquad \Sigma(S \times (0,1)) \subset \cE_S.
\end{equation*}
In particular, $S$ is orientable and the closures $\overline{\cB_S} = \cB_S \cup \Bd(\cB_S)$ and $\overline{\cE_S} = \cE_S \cup \Bd(\cE_S)$ are smooth regular domains with boundary 
\begin{equation*}
\partial \overline{\cB_S} = \Bd(\cB_S) = S = \Bd(\cE_S) = \partial \overline{\cE_S}.
\end{equation*}
\end{theorem}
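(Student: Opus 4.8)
The plan is to combine a local collar analysis of $S$ with a single global input — the number of components of the complement — supplied by algebraic topology. Since $S$ is a closed (hence compact) codimension-$1$ submanifold of $\RR^n$, the tubular neighborhood theorem applies: each point of $S$ has a neighborhood in $\RR^n$ diffeomorphic to $\RR^{n-1}\times(-1,1)$ carrying $S$ to $\RR^{n-1}\times\{0\}$. Thus $S$ locally separates its neighborhood into two pieces, and a short transverse arc meets $S$ exactly once. This already shows that $\Bd(C)\subseteq S$ for every component $C$ of $\RR^n\setminus S$ (the components are open with union $\RR^n\setminus S$), and that every point of $S$ is approached from the complement on both local sides.

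Next I would count components, and simultaneously obtain orientability, by passing to the one-point compactification $\SS^n=\RR^n\cup\{\infty\}$ and invoking Alexander duality. Smooth manifolds are ANRs, so \v{C}ech and singular theories agree and duality gives $\tilde H_0(\SS^n\setminus S)\cong \tilde H^{n-1}(S)$. If $S$ were non-orientable then $H^{n-1}(S)\cong\ZZ/2$, which is impossible since $\tilde H_0$ is free abelian; hence $S$ is orientable and $\tilde H_0(\SS^n\setminus S)\cong\ZZ$, so $\SS^n\setminus S$, and therefore $\RR^n\setminus S$, has exactly two components. The component containing $\infty$ yields the unbounded piece $\cE_S$, and the other the bounded piece $\cB_S$.

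Finally I would assemble the global bicollar and the manifold-with-boundary structure. Because $S$ is orientable and sits inside the orientable $\RR^n$, its normal line bundle has vanishing first Stiefel--Whitney class and is therefore trivial, so the tubular neighborhood theorem upgrades to a global diffeomorphism $\Sigma:S\times(-1,1)\to\RR^n$ onto an open set with $\Sigma(x,0)=x$. Each slab $\Sigma(S\times(-1,0))$ and $\Sigma(S\times(0,1))$ is connected and misses $S$, so lies in a single component; the local transverse-arc picture shows the two slabs lie in \emph{different} components, which (after possibly relabeling the fiber coordinate) we identify with $\cB_S$ and $\cE_S$. Since every point of $S$ is approached from within each component, $\Bd(\cB_S)=S=\Bd(\cE_S)$, and the collar furnishes the boundary charts exhibiting $\overline{\cB_S}$ and $\overline{\cE_S}$ as smooth regular domains with boundary $S$.

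The step I expect to be most delicate is matching the local and global pictures: showing that the two collar slabs land in distinct components and that each component limits onto all of $S$. Alexander duality guarantees the correct count but says nothing about which local side goes where, so one must argue, using that $S$ is connected, that the inner and outer slabs cannot merge around $S$. A secondary technical point is verifying the hypotheses of Alexander duality (local contractibility of $S$), which is automatic for smooth manifolds but worth flagging; alternatively, one can replace the duality step by the mod-$2$ intersection (winding) number of \cite{GP}, which is locally constant on $\RR^n\setminus S$ and jumps by exactly $1$ across $S$, giving the two components directly at the cost of a separate argument for orientability.
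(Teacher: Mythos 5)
The first thing to note is that the paper contains no proof of Theorem \ref{thm:detailed-separation}: it is stated as a known result, with the proof deferred to the references \cite{GP,Lima}. Judged on its own merits, your argument is correct in outline and takes a genuinely different route from the mod-$2$ intersection-theoretic proof in \cite{GP}, which you yourself mention as a fallback. Applying Alexander duality in $\SS^n$ (valid since a compact smooth manifold is an ANR) gives $\tilde{H}_0(\SS^n\setminus S)\cong \tilde{H}^{n-1}(S)$, and since $\tilde{H}_0$ is always free abelian while $H^{n-1}(S;\ZZ)\cong\ZZ/2$ for a closed connected non-orientable $(n-1)$-manifold, you obtain orientability and the two-component count in a single stroke; orientability then kills $w_1$ of the normal line bundle and yields the global bicollar. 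The winding-number route of \cite{GP} is more elementary but delivers only the separation statement, leaving orientability (hence the bicollar) to a separate argument. Implicitly you need $n\geq 2$ throughout --- e.g.\ when deleting $\infty$ from the component containing it, and when arguing that the complement of a large ball containing $S$ is connected --- which is consistent with the theorem's intended setting (for $n=1$ the bounded/unbounded dichotomy is false).

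The one place your write-up is misleading and must be repaired is the claim that ``the local transverse-arc picture shows the two slabs lie in different components'': no local argument can show this, since locally the two sides of any bicollar look distinct, and the issue is precisely whether they merge globally. You correctly flag this in your final paragraph but do not close it. The completion is short: if both slabs $\Sigma(S\times(-1,0))$ and $\Sigma(S\times(0,1))$ lay in the same component $C$ of $\RR^n\setminus S$, then $C\cup S$ would be open in $\RR^n$, because each point of $S$ has the neighbourhood $\Sigma(S\times(-1,1))\subset S\cup C$; the complement of $C\cup S$, namely the union of the remaining components of $\RR^n\setminus S$, is open as well, and it is non-empty because the duality count produced a second component. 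That would split the connected space $\RR^n$ into two disjoint non-empty open sets, a contradiction. With this inserted, the rest of your outline --- the inclusion $\Bd(C)\subset S$ for every component, the inclusion $S\subset\Bd(\cB_S)\cap\Bd(\cE_S)$ coming from the bicollar, the bounded/unbounded identification via a large ball, and the boundary charts for the two regular domains furnished by the collar --- is routine and correct.
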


Theorem \ref{thm:detailed-separation} yields some invariants of embedded hypersurfaces under diffeomorphism.

\begin{restatable}{lemma}{homgroupinvar}\label{lem:homolgy-group-invariants} 
Let $S$ be a closed and connected submanifold of $\RR^n$ of codimension 1. Let $\cI : \RR^n \to \RR^n$ be a diffeomorphism and set $\cI(S) = S'$. Let $\cB_S$, $\cE_S$, $\cB_{S'}$, and $\cE_{S'}$ be as in Theorem \ref{thm:detailed-separation} for the submanifolds $S$ and $S'$. Then
\begin{equation}\label{eq:diffeo-preserves-int-and-ext}
\begin{split}
\cI(\overline{\cB_S}) &= \overline{\cB_{S'}},\\
\cI(\overline{\cE_S}) &= \overline{\cE_{S'}}.
\end{split}
\end{equation}
Let $\psi : S \to S'$ denote the diffeomorphism induced from $\cI$. Then, $\cI$ is orientation-preserving if and only if $\psi$ is orientation-preserving. Consider the inclusions
\begin{align*}
\jmath_{\cB_S} &: S \subset \overline{\cB_S}, & \jmath_{\cE_S} &: S \subset \overline{\cE_S},\\
\jmath_{\cB_{S'}} &: S' \subset \overline{\cB_{S'}}, & \jmath_{\cE_{S'}} &: S' \subset \overline{\cE_{S'}},
\end{align*}
and the associated homomorphisms
\begin{align*}
{\jmath_{\cB_S}}_* &: H_k(S) \to H_k(\overline{\cB_S}), & {\jmath_{\cE_S}}_* &: H_k(S) \to H_k(\overline{\cE_S}),\\
{\jmath_{\cB_{S'}}}_* &: H_k(S') \to H_k(\overline{\cB_{S'}}), & {\jmath_{\cE_{S'}}}_* &: H_k(S') \to H_k(\overline{\cE_{S'}}),
\end{align*}
where $k \in \NN_0 = \{0,1,...\}$. Then, $\psi_* : H_k(S) \to H_k(S')$ satisfies
\begin{equation}\label{eq:diffeo-preserves-int-and-ext-homology}
\begin{split}
\psi_*(\ker {\jmath_{\cB_S}}_*) &= \ker {\jmath_{\cB_{S'}}}_*,\\
\psi_*(\ker {\jmath_{\cE_S}}_*) &= \ker {\jmath_{\cE_{S'}}}_*.
\end{split}
\end{equation}
\end{restatable}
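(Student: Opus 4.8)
The plan is to prove the three claims in turn, extracting everything I can from Theorem~\ref{thm:detailed-separation} and the naturality of the maps involved. For \eqref{eq:diffeo-preserves-int-and-ext}, the key observation is that a diffeomorphism $\cI$ of $\RR^n$ carries the connected open sets of $\RR^n \setminus S$ homeomorphically onto the connected open sets of $\RR^n \setminus S'$, since $\cI(\RR^n \setminus S) = \RR^n \setminus S'$ and $\cI$ sends connected components to connected components. Thus $\{\cI(\cB_S),\cI(\cE_S)\} = \{\cB_{S'},\cE_{S'}\}$ as an unordered pair. To pin down the ordering I would use boundedness: a diffeomorphism of $\RR^n$ need not be proper in general, but any diffeomorphism of $\RR^n$ \emph{is} a homeomorphism, hence proper (preimages of compacta are compact), so it maps bounded-with-compact-closure sets appropriately. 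Actually the cleanest route is that $\cI$ restricts to a homeomorphism $\RR^n \setminus S \to \RR^n \setminus S'$, and a homeomorphism between these two-component spaces matches bounded to bounded component: one can see $\cI(\cB_S)$ is precompact because $\overline{\cB_S}$ is compact (it is a closed bounded subset of $\RR^n$ by Theorem~\ref{thm:detailed-separation}) and $\cI$ continuous, so $\cI(\overline{\cB_S})$ is compact, forcing $\cI(\cB_S) = \cB_{S'}$ and hence $\cI(\cE_S) = \cE_{S'}$. Taking closures and using $\cI(S)=S'$ gives \eqref{eq:diffeo-preserves-int-and-ext}.

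For the orientation equivalence, I would argue that $\cI$ restricts to a diffeomorphism $\overline{\cB_S} \to \overline{\cB_{S'}}$ of manifolds-with-boundary, which therefore carries the inward-pointing collar to the inward-pointing collar. Concretely, the boundary orientation of $S = \partial\overline{\cB_S}$ is determined by the outward normal convention, and $\cI$ is orientation-preserving on $\RR^n$ if and only if its restriction to the boundary diffeomorphism $\psi$ preserves the induced boundary orientations. I would make this precise using the collars $\Sigma$ from Theorem~\ref{thm:detailed-separation}: an orientation of $\RR^n$ together with the transverse direction $\partial_t$ of the collar $\Sigma(x,t)$ induces an orientation of $S$, and $\cI$ respecting (resp.\ reversing) the ambient orientation while sending $\cB_S$ to $\cB_{S'}$ (so preserving the ``inward'' transverse direction up to positive scaling) translates directly into $\psi$ preserving (resp.\ reversing) the induced orientation on $S$. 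This is the step I expect to require the most care, because one must track the transverse direction and confirm that mapping the bounded side to the bounded side preserves, rather than flips, the collar coordinate sign.

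For the homological statement \eqref{eq:diffeo-preserves-int-and-ext-homology}, the argument is formal naturality. By \eqref{eq:diffeo-preserves-int-and-ext}, $\cI$ restricts to a diffeomorphism $\cI_{\cB} : \overline{\cB_S} \to \overline{\cB_{S'}}$ whose restriction to the boundary is $\psi$. Commutativity of the square
\begin{equation*}
\begin{split}
\jmath_{\cB_{S'}} \circ \psi = \cI_{\cB} \circ \jmath_{\cB_S} : S \to \overline{\cB_{S'}}
\end{split}
\end{equation*}
yields, upon applying the homology functor $H_k$, the identity ${\jmath_{\cB_{S'}}}_* \circ \psi_* = (\cI_{\cB})_* \circ {\jmath_{\cB_S}}_*$. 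Since $(\cI_{\cB})_*$ is an isomorphism (as $\cI_{\cB}$ is a diffeomorphism), a class $c \in H_k(S)$ lies in $\ker {\jmath_{\cB_S}}_*$ if and only if $\psi_*(c) \in \ker {\jmath_{\cB_{S'}}}_*$. This gives $\psi_*(\ker {\jmath_{\cB_S}}_*) = \ker {\jmath_{\cB_{S'}}}_*$, and the identical argument with $\cE$ in place of $\cB$ gives the second equality. No genuine obstacle arises here beyond correctly invoking that $\cI_{\cB}$ is a diffeomorphism of the closed domains, which is exactly what \eqref{eq:diffeo-preserves-int-and-ext} provides.

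Overall, the substantive content is confined to \eqref{eq:diffeo-preserves-int-and-ext} (where properness/compactness of $\overline{\cB_S}$ distinguishes bounded from unbounded) and to the orientation bookkeeping; the homology conclusion then follows purely by functoriality. I would therefore present the compactness argument first, the orientation argument second with explicit reference to the collar $\Sigma$, and dispatch \eqref{eq:diffeo-preserves-int-and-ext-homology} last as a short naturality remark.
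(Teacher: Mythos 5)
Your proposal is correct and follows essentially the same route as the paper's proof in Appendix~\ref{app:deferred-proofs-1}: component-counting plus compactness of $\overline{\cB_S}$ pins down \eqref{eq:diffeo-preserves-int-and-ext}, the orientation equivalence is tracked through the transverse direction at the boundary (the paper phrases this with outward-exiting curves where you use the collar $\Sigma$, but the bookkeeping is identical), and \eqref{eq:diffeo-preserves-int-and-ext-homology} follows from the commutative squares $\cI_\cB \circ \jmath_{\cB_S} = \jmath_{\cB_{S'}} \circ \psi$ and $\cI_\cE \circ \jmath_{\cE_S} = \jmath_{\cE_{S'}} \circ \psi$ together with the fact that $(\cI_\cB)_*$, $(\cI_\cE)_*$, and $\psi_*$ are isomorphisms. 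No gaps beyond the level of detail the paper itself leaves implicit.
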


For a proof of Lemma \ref{lem:homolgy-group-invariants}, see Appendix \ref{app:deferred-proofs-1}. Complementary to Lemma \ref{lem:homolgy-group-invariants}, there is the following well-known result (see \cite{Cantarella02,PerrellaThesis}), whose proof is presented here for completeness.

\begin{lemma}\label{lem:isomorphism-via-inclusions}
Let $S$ be closed connected codimension 1 submanifold $S$ of $\RR^n$. Consider the inclusions $\jmath_{\cB_S} : S \subset \overline{\cB_S}$ and $\jmath_{\cE_S} : S \subset \overline{\cE_S}$ and the associated homomorphisms
\begin{align*}
{\jmath_{\cB_S}}_* &: H_k(S) \to H_k(\overline{\cB_S}),\\
{\jmath_{\cE_S}}_* &: H_k(S) \to H_k(\overline{\cE_S}),
\end{align*}
where $k \in \NN = \{1,2,...\}$. Then, the map
\begin{equation*}
({\jmath_{\cB_S}}_*,{\jmath_{\cE_S}}_*) : H_k(S) \to H_k(\overline{\cB_S}) \oplus H_k(\overline{\cE_S}),    
\end{equation*}
is a group isomorphism. We have an internal direct sum
\begin{equation*}
H_k(S) = \ker {\jmath_{\cB_S}}_* \oplus \ker {\jmath_{\cE_S}}_*.
\end{equation*}
Moreover, the restricted maps
\begin{align*}
{\jmath_{\cB_S}}_*|_{\ker {\jmath_{\cE_S}}_*} &: {\ker {\jmath_{\cE_S}}_*} \to H_k(\overline{\cB_S}),\\
{\jmath_{\cE_S}}_*|_{\ker {\jmath_{\cB_S}}_*} &: {\ker {\jmath_{\cB_S}}_*} \to H_k(\overline{\cE_S}),
\end{align*}
are also isomorphisms.
\end{lemma}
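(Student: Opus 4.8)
The plan is to prove Lemma \ref{lem:isomorphism-via-inclusions} using the Mayer-Vietoris sequence associated to the decomposition $\RR^n = \overline{\cB_S} \cup \overline{\cE_S}$, where the two closed regular domains overlap precisely in their common boundary $S$. More precisely, I would first thicken the pieces slightly into open sets to make Mayer-Vietoris applicable: by the collar map $\Sigma$ from Theorem \ref{thm:detailed-separation}, the open set $U = \cB_S \cup \Sigma(S \times (-1,1))$ deformation retracts onto $\overline{\cB_S}$, the open set $V = \cE_S \cup \Sigma(S \times (-1,1))$ deformation retracts onto $\overline{\cE_S}$, their union is all of $\RR^n$, and their intersection $U \cap V = \Sigma(S \times (-1,1))$ deformation retracts onto $S$. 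Thus the reduced Mayer-Vietoris sequence reads, for each $k$,
\begin{equation*}
\cdots \to \widetilde{H}_k(\RR^n) \to \widetilde{H}_{k-1}(S) \xrightarrow{\ ({\jmath_{\cB_S}}_*,\,-{\jmath_{\cE_S}}_*)\ } \widetilde{H}_{k-1}(\overline{\cB_S}) \oplus \widetilde{H}_{k-1}(\overline{\cE_S}) \to \widetilde{H}_{k-1}(\RR^n) \to \cdots
\end{equation*}
Since $\RR^n$ is contractible, all its reduced homology vanishes, so the connecting maps force $({\jmath_{\cB_S}}_*, {\jmath_{\cE_S}}_*)$ to be an isomorphism in every positive degree $k$ (the sign on the second coordinate is irrelevant for the isomorphism claim). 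This establishes the first assertion.

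From the isomorphism $({\jmath_{\cB_S}}_*,{\jmath_{\cE_S}}_*)$, the internal direct sum decomposition $H_k(S) = \ker {\jmath_{\cB_S}}_* \oplus \ker {\jmath_{\cE_S}}_*$ follows by a short algebraic argument. Indeed, if $\Phi = (\Phi_1,\Phi_2)$ is an isomorphism onto a direct sum $A \oplus B$, then writing $P = \ker \Phi_1$ and $Q = \ker \Phi_2$, one checks $P \cap Q = \ker \Phi = 0$, and for any $x$ one can split $\Phi(x) = (\Phi_1 x, 0) + (0, \Phi_2 x)$ and pull the two summands back through $\Phi^{-1}$ to exhibit $x$ as a sum of an element of $Q$ and an element of $P$; hence $H_k(S) = P \oplus Q$ internally. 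This is purely formal and should be dispatched quickly.

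Finally, for the statement that the restricted maps are isomorphisms, I would argue that the restriction ${\jmath_{\cB_S}}_*|_{\ker {\jmath_{\cE_S}}_*}$ is the composition of the inclusion $\ker {\jmath_{\cE_S}}_* \hookrightarrow H_k(S)$, the isomorphism $({\jmath_{\cB_S}}_*,{\jmath_{\cE_S}}_*)$, and the projection onto the first factor $H_k(\overline{\cB_S})$. On $\ker {\jmath_{\cE_S}}_*$ the second coordinate vanishes identically, so this composite is injective (its kernel is $\ker {\jmath_{\cB_S}}_* \cap \ker {\jmath_{\cE_S}}_* = 0$) and, by the direct-sum decomposition together with surjectivity of $({\jmath_{\cB_S}}_*,{\jmath_{\cE_S}}_*)$ onto $H_k(\overline{\cB_S}) \oplus H_k(\overline{\cE_S})$, also surjective onto $H_k(\overline{\cB_S})$; the same reasoning gives the other restriction. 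The only genuinely delicate point is the topological setup at the start --- verifying that the thickened sets $U$ and $V$ deformation retract onto the closed domains and onto $S$ as claimed, so that Mayer-Vietoris applies to pieces homotopy equivalent to $\overline{\cB_S}$, $\overline{\cE_S}$, and $S$. This is exactly where the collar diffeomorphism $\Sigma$ supplied by Theorem \ref{thm:detailed-separation} does the work, and once it is invoked the rest of the argument is formal homological algebra.
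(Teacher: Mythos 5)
Your proposal is correct and takes essentially the same approach as the paper: a Mayer--Vietoris argument built from the collar $\Sigma$ of Theorem \ref{thm:detailed-separation} together with the vanishing of the reduced homology of $\RR^n$, followed by the same formal algebra for the internal direct sum and the restricted isomorphisms. The only cosmetic difference is that you thicken both sides to get a genuine open cover $\{U,V\}$, whereas the paper thickens only the exterior, applying Mayer--Vietoris to the closed set $\overline{\cB_S}$ and the open set $N = \overline{\cE_S} \cup \Sigma(S \times (-1,0])$, whose interiors cover $\RR^n$.
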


\begin{proof}
For the first claim, we follow \cite[Proposition 2.4.2]{PerrellaThesis} and \cite{Cantarella02} with some more details filled-in. We work in the topological category for convenience. This means that all maps and retractions considered here are continuous but not necessarily smooth. We first work with a Mayer-Vietoris sequence as follows. Let $\Sigma : S \times (-1,1) \to \RR^n$ be a map as in Theorem \ref{thm:detailed-separation}. Let $C = \Sigma(S \times (-1,0])$ and consider the open set
\begin{equation*}
N = \overline{\cE_S} \cup C
\end{equation*}
of $\RR^n$. The interiors of $\overline{\cB_S}$ and $N$ in $\RR^n$ cover $\RR^n$ and so we may apply the Mayer-Vietoris sequence to $\overline{B_S}$ and $N$. Part of the sequence reads
\begin{equation*}
\cdots \to H_{k+1}(\RR^n) \to H_k(\overline{\cB_S} \cap N) \to H_k(\overline{\cB_S}) \oplus H_k(N) \to H_k(\RR^n) \to \cdots.
\end{equation*}
Now, because $k \geq 1$, $H_{k+1}(\RR^n)$ and $H_k(\RR^n)$ are trivial groups. Also, from the map $\Sigma$, we see that $C$ deformation retracts to $S$. Hence, the above part of the sequence reduces to the short exact sequence
\begin{equation*}
0 \to H_k(S) \to H_k(\overline{\cB_S}) \oplus H_k(N) \to 0,
\end{equation*}
where the map $H_1(S) \to H_1(\overline{\cB_S}) \oplus H_1(N)$ is $({\jmath_{\cB_S}}_*,i_*)$ where $i : S \subset N$ is the inclusion. In particular, $({\jmath_{\cB_S}}_*,i_*)$ is a group isomorphism. In addition, $N$ deformation retracts to $\overline{\cE_S}$ and so, we obtain the short exact sequence
\begin{equation*}
0 \to H_k(S) \to H_k(\overline{\cB_S}) \oplus H_k(\overline{\cE_S}) \to 0,
\end{equation*}
where the map $H_k(S) \to H_k(\overline{\cB_S}) \oplus H_k(\overline{\cE_S})$ is $({\jmath_{\cB_S}}_*,{\jmath_{\cE_S}}_*)$. Hence, $({\jmath_{\cB_S}}_*,{\jmath_{\cE_S}}_*)$ is an isomorphism.

In particular, because the direct sum $H_k(\overline{\cB_S}) \oplus H_k(\overline{\cE_S})$ can be expressed internally as
\begin{equation*}
H_k(\overline{\cB_S}) \oplus H_k(\overline{\cE_S}) = ( \{0\} \times H_k(\overline{\cE_S})) \oplus (H_k(\overline{\cB_S}) \times \{0\}),
\end{equation*}
the isomorphism $({\jmath_{\cB_S}}_*,{\jmath_{\cE_S}}_*)$ gives the internal direct sum
\begin{equation}\label{eq:kernel-direct-sum}
H_k(S) = \ker {\jmath_{\cB_S}}_* \oplus \ker {\jmath_{\cE_S}}_*.
\end{equation}
In particular, Equation \eqref{eq:kernel-direct-sum} says that $\ker {\jmath_{\cB_S}}_* \cap \ker {\jmath_{\cE_S}}_* = \{0\}$ and so both maps
\begin{align*}
{\jmath_{\cB_S}}_*|_{\ker {\jmath_{\cE_S}}_*} &: {\ker {\jmath_{\cE_S}}_*} \to H_k(\overline{\cB_S}),\\
{\jmath_{\cE_S}}_*|_{\ker {\jmath_{\cB_S}}_*} &: {\ker {\jmath_{\cB_S}}_*} \to H_k(\overline{\cE_S}),
\end{align*}
are injective. Surjectivity of $({\jmath_{\cB_S}}_*,{\jmath_{\cE_S}}_*)$ implies surjectivity of ${\jmath_{\cB_S}}_*$ and ${\jmath_{\cE_S}}_*$. Equation \eqref{eq:kernel-direct-sum} therefore implies surjectivity of the restricted maps
\begin{align*}
{\jmath_{\cB_S}}_*|_{\ker {\jmath_{\cE_S}}_*} &: {\ker {\jmath_{\cE_S}}_*} \to H_k(\overline{\cB_S}),\\
{\jmath_{\cE_S}}_*|_{\ker {\jmath_{\cB_S}}_*} &: {\ker {\jmath_{\cB_S}}_*} \to H_k(\overline{\cE_S}),
\end{align*}
and so, in conclusion, both maps are isomorphisms.
\end{proof}

The last ingredient needed to prove Proposition \ref{prop:homology-action-characterisation} is counting the rank of the groups $\ker {\jmath_{\cB_S}}_*$ and $\ker {\jmath_{\cE_S}}_*$. We present a method for counting the ranks which first goes through cohomology. We do so because of the interesting connection with Hodge Theory.

\begin{lemma}
Let $M$ be a compact orientable manifold with boundary $\partial M$. Let $\dim M = n$. Let $\kappa \in \Omega^k(\partial M)$ be a closed $k$-form on $\partial M$. Let $\jmath : \partial M \subset M$ be the inclusion. Then, there exists a $k$-form $\omega \in \Omega^k(M)$ such that
\begin{equation}\label{eq:Dirichlet-for-d}
d\omega = 0, \qquad \jmath^*\omega = \kappa
\end{equation}
if and only if, for all closed $(n-k-1)$-forms $\eta \in \Omega^{n-k-1}(M)$,
\begin{equation}\label{eq:coisotropic}
\int_{\partial M} \kappa \wedge \jmath^*\eta = 0.
\end{equation}
\end{lemma}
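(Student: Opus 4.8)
The plan is to translate the problem into de Rham cohomology, where the stated condition \eqref{eq:coisotropic} becomes the vanishing of the connecting homomorphism in the long exact sequence of the pair $(M,\partial M)$, detected by Poincar\'e--Lefschetz duality.

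First I would reduce the existence of a closed extension to a statement purely about cohomology classes. I claim that a closed $\omega \in \Omega^k(M)$ with $\jmath^*\omega = \kappa$ exists if and only if the de Rham class $[\kappa] \in H^k(\partial M)$ lies in the image of $\jmath^* \colon H^k(M) \to H^k(\partial M)$. The forward direction is trivial. For the converse, if $[\kappa] = \jmath^*[\alpha]$ for some closed $\alpha \in \Omega^k(M)$, then $\jmath^*\alpha - \kappa = d\beta$ for some $\beta \in \Omega^{k-1}(\partial M)$; extending $\beta$ to $\tilde\beta \in \Omega^{k-1}(M)$ using a collar of $\partial M$ (guaranteed by the standard collar neighbourhood theorem) and setting $\omega = \alpha - d\tilde\beta$ produces a closed form with $\jmath^*\omega = \jmath^*\alpha - d\beta = \kappa$. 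Thus the lemma is equivalent to the assertion that $[\kappa] \in \operatorname{im}\jmath^*$ if and only if \eqref{eq:coisotropic} holds for all closed $\eta$.

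Next I would invoke the long exact sequence of the pair,
\begin{equation*}
\cdots \to H^k(M) \xrightarrow{\jmath^*} H^k(\partial M) \xrightarrow{\delta} H^{k+1}(M,\partial M) \to \cdots,
\end{equation*}
so that by exactness $[\kappa] \in \operatorname{im}\jmath^*$ if and only if $\delta[\kappa] = 0$. Working with relative de Rham cohomology, the connecting map admits the explicit description $\delta[\kappa] = [d\tilde\kappa]$, where $\tilde\kappa \in \Omega^k(M)$ is any extension with $\jmath^*\tilde\kappa = \kappa$ (note $\jmath^*(d\tilde\kappa) = d\kappa = 0$, so $d\tilde\kappa$ represents a relative class). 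The crucial computation is then the Stokes identity: for any closed $\eta \in \Omega^{n-k-1}(M)$, since $d(\tilde\kappa\wedge\eta) = d\tilde\kappa \wedge \eta$,
\begin{equation*}
\int_M d\tilde\kappa \wedge \eta = \int_M d(\tilde\kappa\wedge\eta) = \int_{\partial M} \jmath^*(\tilde\kappa\wedge\eta) = \int_{\partial M} \kappa \wedge \jmath^*\eta.
\end{equation*}
This shows that the left-hand pairing of the relative class $\delta[\kappa]$ with $[\eta]$ equals the boundary integral in \eqref{eq:coisotropic}, and simultaneously yields the easy direction of the equivalence (a closed extension gives $\int_{\partial M}\kappa\wedge\jmath^*\eta = \int_M d(\omega\wedge\eta) = 0$).

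Finally I would close the argument with Poincar\'e--Lefschetz duality: the pairing
\begin{equation*}
H^{k+1}(M,\partial M) \times H^{n-k-1}(M) \to \RR, \qquad ([\mu],[\eta]) \mapsto \int_M \mu\wedge\eta,
\end{equation*}
is nondegenerate on a compact orientable $M$. Consequently $\delta[\kappa] = 0$ if and only if $\int_M \delta[\kappa]\wedge[\eta] = 0$ for every class $[\eta] \in H^{n-k-1}(M)$, which by the identity above is exactly condition \eqref{eq:coisotropic} ranging over all closed $(n-k-1)$-forms $\eta$. Combining with the reduction of the first step completes the proof. The main obstacle I anticipate is the bookkeeping around the connecting homomorphism: one must verify that $\delta[\kappa]$ is well defined independently of the chosen extension $\tilde\kappa$ and that the relative--absolute wedge pairing descends to cohomology, and then cite the nondegeneracy of Lefschetz duality in the correct degrees; the collar-extension refinement upgrading a cohomological statement to an \emph{exact} identification $\jmath^*\omega = \kappa$ is routine but should be stated carefully.
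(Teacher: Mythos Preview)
Your proof is correct and takes a genuinely different route from the paper's. The paper proves the reverse implication analytically: it fixes a Riemannian metric, applies the Hodge--Morrey--Friedrichs decomposition $\Omega^{k+1}(M) = d\Omega^k_D(M) \oplus \delta\Omega^{k+2}_N(M) \oplus \cH^{k+1}(M)$ to $d\Lambda$ for an arbitrary extension $\Lambda$ of $\kappa$, and then uses the integral condition \eqref{eq:coisotropic} to kill the harmonic component, producing an explicit closed $\omega$ with $\jmath^*\omega = \kappa$. Your argument is instead cohomological: you reduce to $[\kappa] \in \operatorname{im}\jmath^*$, identify this via the long exact sequence with $\delta[\kappa] = 0$, and detect the latter through the nondegeneracy of the Poincar\'e--Lefschetz pairing $H^{k+1}(M,\partial M)\times H^{n-k-1}(M)\to\RR$. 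Both are standard, but they buy different things: the paper's Hodge approach is constructive and ties into the $P$-harmonic form machinery used elsewhere in the article, while your approach is shorter, more conceptual, and makes transparent why \eqref{eq:coisotropic} is exactly the dual obstruction to extending. Your collar-extension step upgrading the cohomological statement to an exact identity $\jmath^*\omega=\kappa$ is the correct refinement, and the bookkeeping you flag (well-definedness of $\delta$ and descent of the pairing) is routine.
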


\begin{proof}
For the forward direction, assume that there exists a solution $\omega \in \Omega^k(M)$ to Equation \eqref{eq:Dirichlet-for-d}. Then, for all closed $(n-k)$-forms $\eta \in \Omega^{n-k}(M)$,
\begin{align*}
\int_{\partial M} \kappa \wedge \jmath^*\eta &= \int_{\partial M} \jmath^*\omega \wedge \jmath^*\eta\\
&= \int_{\partial M} \jmath^*(\omega \wedge \eta)\\
&= \int_{M} d(\omega \wedge \eta)\\
&= 0,
\end{align*}
where in the last line, we used the fact that $d\omega = 0$ and $d \eta = 0$.

The reverse direction is essentially that of \cite[Theorem 3.1.1]{Schwarz} which makes use of a Hodge decomposition theorem. Fix a Riemannian metric $g$ on $M$. For all $p \in \{0,...,n\}$, let $\star : \Omega^{p}(M) \to \Omega^{n-p}(M)$ be the induced Hodge Star operator and $\delta = (-1)^{np+n+1} \star d \star$ be the induced co-differential operator $\Omega^p(M) \to \Omega^{p-1}(M)$. Then with respect to the induced $L^2$ inner product $\langle,\rangle_{L^2}$ on $\Omega^k(M)$ given by
\begin{equation*}
\langle \omega, \eta \rangle = \int_M \omega \wedge \star \eta,
\end{equation*}
one has the $L^2$-orthogonal Hodge decomposition \cite{Schwarz}
\begin{equation}\label{eq:the-decomp}
\Omega^k(M) = d\Omega^{k-1}_D(M) \oplus \delta \Omega^{k+1}_N(M) \oplus \cH^k(M),     
\end{equation}
where for any $p \in \{0,...,n\}$,
\begin{align*}
\Omega^{p}_D(M) &= \{\omega \in \Omega^p(M) : \jmath^*\omega = 0\},\\
\Omega^p_N(M) &= \{\omega \in \Omega^p(M) : \jmath^*(\star \omega) = 0\},\\   
\cH^p(M) &= \{\omega \in \Omega^p(M) : d\omega = 0, \, \delta \omega = 0\}.  
\end{align*}

Now, let $\Lambda \in \Omega^k(M)$ be such that $\jmath^*\Lambda = \lambda$. Such a form $\Lambda$ exists from the Collar Neighborhood Theorem \cite[Theorem 9.25]{Lee12}, that is, there exists a diffeomorphism $\Sigma : \partial M \times [0,1) \to U$ where $U$ is a neighborhood of $\partial M$ in $M$ and $\Sigma(x,0) = x$ for all $x \in \partial M$. Set $\eta = d\Lambda$. Then, from Equation \eqref{eq:the-decomp}, we have the orthogonal decomposition
\begin{equation*}
\eta = d\alpha_\eta  + \delta \beta_\eta + \nu_\eta.
\end{equation*}
Because $\eta = d\Lambda$ is exact, one has that $\langle\eta,\delta \beta\rangle_{L^2} = 0$ for all $\delta\beta \in \delta \Omega^{k+1}_N(M)$. Hence, $\delta \beta_\eta = 0$ and so
\begin{equation*}
\eta = d\alpha_\eta + \nu_\eta.
\end{equation*}
Now we set
\begin{equation*}
\omega = \Lambda - \alpha_\eta.    
\end{equation*}
Then, we have
\begin{gather*}
\jmath^*\omega = \jmath^*\Lambda-\jmath^*\alpha_\eta = \lambda,\\
d\omega = d\Lambda - d\alpha_\eta = \eta - d\alpha_\eta = \nu_\eta.
\end{gather*}
Lastly, for any $\nu \in \cH^{k+1}(M)$, because $d (\star \nu) = 0$, one finds that
\begin{align*}
(d\omega,\nu)_{L^2} &= \int_M d\omega \wedge \star \nu \\
&= \int_M d(\omega \wedge \star \nu)\\
&= \int_{\partial M} \jmath^*\omega \wedge \jmath^*(\star \nu)\\
&= \int_{\partial M} \lambda \wedge \jmath^*(\star \nu)\\
&= 0,
\end{align*}
where in the last line, we used the fact that Equation \eqref{eq:coisotropic} holds. Hence, because $\nu_\eta \in \cH^{k+1}(M)$, it follows that $\nu_\eta = 0$. Therefore, $\omega$ solves Equation \eqref{eq:Dirichlet-for-d}.
\end{proof}

We obtain, as a corollary, the following well-known result.

\begin{corollary}\label{cor:lagrangian-subspace}
Let $M$ be a compact orientable manifold with boundary $\partial M$. Suppose that $\dim M = 3 + 4m$ for some $m \in \mathbb{N}_0 = \{0\} \cup \NN$. Let $\jmath : \partial M \subset M$ be the inclusion, $k = 2m+1$, and consider the pullback 
\begin{equation*}
\jmath^* : H^{k}_{\text{dR}}(M) \to H^{k}_{\text{dR}}(\partial M)    
\end{equation*}
on de Rham cohomology. Then, by Poincar\'e duality, the bilinear map $W : H^k_{\text{dR}}(\partial M) \times H^k_{\text{dR}}(\partial M) \to \RR$ given by
\begin{equation*}
W([\kappa],[\lambda]) = \int_{\partial M} \kappa \wedge \lambda
\end{equation*}
is a symplectic form, and the image
\begin{equation*}
    \jmath^*(H^k_{\text{dR}}(M)) \subset H^k_{\text{dR}}(\partial M)
\end{equation*}
is a Lagrangian subspace for $W$.
\end{corollary}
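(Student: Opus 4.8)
The plan is to establish three facts in sequence: that $W$ is well-defined on cohomology, that $W$ is nondegenerate (hence symplectic), and finally that $\jmath^*(H^k_{\text{dR}}(M))$ is Lagrangian. For the first point, I would note that if $\kappa$ is replaced by $\kappa + d\rho$ with $\rho \in \Omega^{k-1}(\partial M)$, then since $\lambda$ is closed and $\partial M$ is a closed manifold (being the boundary of a compact manifold), Stokes' theorem gives $\int_{\partial M} d\rho \wedge \lambda = \int_{\partial M} d(\rho \wedge \lambda) = 0$; the same argument applies in the second slot. Antisymmetry of $W$ follows from the graded-commutativity of the wedge product: with $\dim \partial M = n-1 = 2 + 4m = 2k$ and both forms of degree $k$, one has $\kappa \wedge \lambda = (-1)^{k^2} \lambda \wedge \kappa = -\lambda \wedge \kappa$ because $k = 2m+1$ is odd, so $k^2$ is odd. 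Thus $W$ is a well-defined antisymmetric bilinear form.

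For nondegeneracy, I would invoke Poincar\'e duality on the closed oriented $2k$-manifold $\partial M$ (orientability of $\partial M$ follows from orientability of $M$). The pairing $H^k_{\text{dR}}(\partial M) \times H^{2k-k}_{\text{dR}}(\partial M) \to \RR$ sending $([\kappa],[\lambda]) \mapsto \int_{\partial M} \kappa \wedge \lambda$ is exactly $W$ since $2k - k = k$, and Poincar\'e duality asserts precisely that this pairing is perfect. Hence $W$ is nondegenerate, and combined with the previous paragraph, $W$ is a symplectic form.

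For the Lagrangian claim, I would first show that $L \coloneqq \jmath^*(H^k_{\text{dR}}(M))$ is isotropic, i.e. $W|_{L \times L} = 0$. Given two classes $\jmath^*[\alpha]$ and $\jmath^*[\beta]$ with $\alpha, \beta \in \Omega^k(M)$ closed, I compute
\begin{equation*}
W(\jmath^*[\alpha], \jmath^*[\beta]) = \int_{\partial M} \jmath^*\alpha \wedge \jmath^*\beta = \int_{\partial M} \jmath^*(\alpha \wedge \beta) = \int_M d(\alpha \wedge \beta) = 0,
\end{equation*}
using Stokes' theorem and that $\alpha \wedge \beta$ is closed. This is precisely the content of the preceding Lemma with $\kappa = \jmath^*\alpha$: its hypothesis \eqref{eq:coisotropic} is satisfied for every class in $\jmath^*(H^k_{\text{dR}}(M))$, so $L$ is contained in its own $W$-orthogonal complement. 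The main obstacle is then the dimension count needed to upgrade isotropic to Lagrangian: I must show $\dim L = \tfrac12 \dim H^k_{\text{dR}}(\partial M)$.

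To close this gap I would use the long exact sequence of the pair $(M, \partial M)$ together with Poincar\'e--Lefschetz duality, which identifies $H^k(M)$ with $H^{n-k}(M, \partial M)$ and shows that the image of $\jmath^*$ and the kernel of the connecting map are dual under $W$. Concretely, the preceding Lemma already characterizes $L$ as exactly the $W$-orthogonal complement $L^{\perp}$ of the image of $\jmath^*$ on all closed $(n-k-1) = k$-forms, giving $L = L^{\perp}$ directly, whence $L$ is Lagrangian. The technical care lies in matching the Lemma's condition \eqref{eq:coisotropic}, phrased in terms of pullbacks of closed forms on $M$, with the cohomological statement $L = L^\perp$; this is the step I expect to require the most attention, since it is where the analytic Hodge-theoretic content of the Lemma is converted into the clean linear-algebraic conclusion.
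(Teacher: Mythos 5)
Your proposal is correct and follows the paper's intended route: symplecticity of $W$ comes from Poincar\'e duality on the closed oriented $2k$-manifold $\partial M$ together with antisymmetry of the wedge pairing in odd degree $k = 2m+1$, and the Lagrangian property is exactly the preceding Lemma specialized to $n = 3+4m$, $k = 2m+1$ (so that $n-k-1 = k$), which says precisely that $L = \jmath^*(H^k_{\text{dR}}(M))$ coincides with its own $W$-orthogonal complement. The detour you sketch through the long exact sequence of the pair and Poincar\'e--Lefschetz duality is unnecessary, as you yourself conclude --- the Lemma already yields $L = L^{\perp}$ directly.
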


The ranks of $\ker {\jmath_{\cB_S}}_*$ and $\ker {\jmath_{\cE_S}}_*$ can now be computed as follows.

\begin{lemma}\label{lem:ranks-of-homology-subgroups}
Let $S$ be closed connected codimension 1 submanifold $S$ of $\RR^n$. Assume that $n = 3 + 4m$ for some $m \in \mathbb{N}_0$. Consider the inclusions $\jmath_{\cB_S} : S \subset \overline{\cB_S}$ and $\jmath_{\cE_S} : S \subset \overline{\cE_S}$ and the associated homomorphisms
\begin{align*}
{\jmath_{\cB_S}}_* &: H_k(S) \to H_k(\overline{\cB_S}),\\
{\jmath_{\cE_S}}_* &: H_k(S) \to H_k(\overline{\cE_S}),
\end{align*}
where $k = 2m + 1$. Then, 
\begin{equation*}
\rank{\ker {\jmath_{\cB_S}}_*} = \rank{H_k(\overline{\cB_S})} = \frac{\rank{H_k(S)}}{2} = \rank{H_k(\overline{\cE_S})} = \rank{\ker {\jmath_{\cE_S}}_*}.
\end{equation*}
\end{lemma}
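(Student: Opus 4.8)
The plan is to combine the homological splitting from Lemma \ref{lem:isomorphism-via-inclusions} with the Lagrangian structure from Corollary \ref{cor:lagrangian-subspace}, translating between homology and cohomology via the Universal Coefficient Theorem and Poincar\'e--Lefschetz duality. First I would record the degree bookkeeping: with $n = 3+4m$ and $k = 2m+1$, we have $n-k = 2m+2 = k+1$ and, crucially, $\dim S = n-1 = 2(2m+1) = 2k$, so $S$ is a closed orientable $2k$-manifold and $k$ is its middle dimension. This is exactly the setting where Poincar\'e duality on $S$ furnishes a nondegenerate cup-product pairing on $H^k_{\text{dR}}(S)$, which is the form $W$ of Corollary \ref{cor:lagrangian-subspace} applied to $\partial\overline{\cB_S} = S$.

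The key chain of equalities runs as follows. By Lemma \ref{lem:isomorphism-via-inclusions}, the map $({\jmath_{\cB_S}}_*,{\jmath_{\cE_S}}_*)$ is an isomorphism $H_k(S) \xrightarrow{\sim} H_k(\overline{\cB_S}) \oplus H_k(\overline{\cE_S})$, which immediately gives
\begin{equation*}
\rank H_k(S) = \rank H_k(\overline{\cB_S}) + \rank H_k(\overline{\cE_S}).
\end{equation*}
Moreover the same lemma shows ${\jmath_{\cE_S}}_*|_{\ker {\jmath_{\cB_S}}_*}$ and ${\jmath_{\cB_S}}_*|_{\ker {\jmath_{\cE_S}}_*}$ are isomorphisms, so $\rank \ker {\jmath_{\cB_S}}_* = \rank H_k(\overline{\cE_S})$ and $\rank \ker {\jmath_{\cE_S}}_* = \rank H_k(\overline{\cB_S})$. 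It therefore suffices to prove the single equality $\rank H_k(\overline{\cB_S}) = \rank H_k(\overline{\cE_S})$, as everything else then collapses to $\tfrac{1}{2}\rank H_k(S)$. Applying Corollary \ref{cor:lagrangian-subspace} with $M = \overline{\cB_S}$, the image $\jmath_{\cB_S}^*(H^k_{\text{dR}}(\overline{\cB_S}))$ is Lagrangian in $(H^k_{\text{dR}}(S),W)$, so its dimension is $\tfrac{1}{2}\dim H^k_{\text{dR}}(S)$. Passing from homology to cohomology ranks via the Universal Coefficient Theorem (the free ranks agree, $\rank H_k = \dim H^k_{\text{dR}}$), and noting that $\jmath_{\cB_S}^*$ on cohomology is dual to ${\jmath_{\cB_S}}_*$ on homology so their ranks coincide, gives $\rank H_k(\overline{\cB_S}) = \tfrac{1}{2}\rank H_k(S)$. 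Running the identical argument with $M = \overline{\cE_S}$ yields $\rank H_k(\overline{\cE_S}) = \tfrac{1}{2}\rank H_k(S)$ as well, and all five quantities in the statement equal $\tfrac{1}{2}\rank H_k(S)$.

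The main obstacle, and the point requiring care, is the identification $\rank(\text{im } \jmath^*) = \rank H_k(M)$, i.e.\ that $\jmath^*$ is \emph{injective} on the free part, not merely that its image is Lagrangian. The Lagrangian statement alone only gives $\dim \jmath^*(H^k_{\text{dR}}(M)) = \tfrac{1}{2}\dim H^k_{\text{dR}}(S)$; to conclude $\rank H_k(M) = \tfrac12 \rank H_k(S)$ I must argue that $\jmath^* : H^k_{\text{dR}}(M) \to H^k_{\text{dR}}(S)$ has trivial kernel. This is where the half-lives-half-dies phenomenon of the long exact sequence of the pair $(M,\partial M)$ enters: Poincar\'e--Lefschetz duality forces the rank of $\jmath^*$ in the relevant middle degree to be exactly half of $\rank H^k(S)$, and since the image already has that dimension, $\jmath^*$ must be injective on $H^k_{\text{dR}}(M)$. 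Alternatively, I can bypass injectivity entirely by arguing directly at the level of the de Rham Lagrangian: the preceding Lemma characterises $\jmath_{\cB_S}^*(H^k_{\text{dR}}(\overline{\cB_S}))$ as the annihilator-type subspace cut out by the integral condition \eqref{eq:coisotropic}, and one checks $\jmath_{\cB_S}^*(H^k_{\text{dR}}(\overline{\cB_S}))$ and $\jmath_{\cE_S}^*(H^k_{\text{dR}}(\overline{\cE_S}))$ are complementary Lagrangians whose dimensions must then each be half of $\dim H^k_{\text{dR}}(S)$. Either route closes the argument; I would present the half-lives-half-dies version as the cleanest, since the Lagrangian property plus the dimension count is self-contained once injectivity is established.
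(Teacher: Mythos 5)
Your overall skeleton---Lemma \ref{lem:isomorphism-via-inclusions} for the rank additivity and the cross-isomorphisms, Corollary \ref{cor:lagrangian-subspace} for the Lagrangian image, and de Rham/universal coefficients to pass between homology and cohomology---is exactly the paper's, and you have correctly isolated the crux: one must know that $\jmath_{\cB_S}^* : H^k_{\text{dR}}(\overline{\cB_S}) \to H^k_{\text{dR}}(S)$ is injective, since the Lagrangian property only controls the dimension of its image. The gap is in how you propose to get injectivity. The ``half-lives-half-dies'' statement \emph{is} the assertion that the rank of $\jmath^*$ (that is, the dimension of its image) equals $\tfrac{1}{2}\dim H^k_{\text{dR}}(S)$; it is the same dimension count as the Lagrangian property, not an independent input. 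So ``the rank of $\jmath^*$ is half, and the image already has that dimension, hence $\jmath^*$ is injective'' is a non sequitur: injectivity requires $\dim H^k_{\text{dR}}(M) = \dim \jmath^*(H^k_{\text{dR}}(M))$, which is precisely the equality you are trying to prove. Moreover, no argument using only Poincar\'e--Lefschetz duality on an abstract compact oriented manifold with boundary can give injectivity: for $M = T^3 \setminus B$ ($B$ an open ball, so $m = 0$, $k = 1$) the image of $\jmath^*$ in $H^1_{\text{dR}}(S^2) = 0$ is Lagrangian and half-dimensional, yet $\jmath^*$ kills all of $H^1_{\text{dR}}(M) \cong \RR^3$. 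Injectivity is special to complementary regions of a hypersurface in $\RR^n$. The same objection applies to your ``alternative'' route: verifying that the two images are complementary Lagrangians is, by duality, essentially equivalent to the injectivity you claim to bypass, and as written that route never connects the image dimensions to $\rank H_k(\overline{\cB_S})$ and $\rank H_k(\overline{\cE_S})$ at all.

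The missing ingredient is already in your hands, and it is the one the paper uses: Lemma \ref{lem:isomorphism-via-inclusions} gives surjectivity of ${\jmath_{\cB_S}}_* : H_k(S) \to H_k(\overline{\cB_S})$ (the direct-sum map is onto, hence so is each factor). Dualizing (de Rham's theorem, or tensoring with $\RR$ and taking duals), surjectivity of ${\jmath_{\cB_S}}_*$ yields injectivity of $\jmath_{\cB_S}^*$, whence $\dim H^k_{\text{dR}}(\overline{\cB_S}) = \dim \jmath_{\cB_S}^*\bigl(H^k_{\text{dR}}(\overline{\cB_S})\bigr) = \tfrac{1}{2}\rank H_k(S)$, and the rest of your chain of equalities goes through verbatim. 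Equivalently: the rank of the linear map ${\jmath_{\cB_S}}_* \otimes \RR$ equals the rank of its dual $\jmath_{\cB_S}^*$, which is $\tfrac{1}{2}\rank H_k(S)$ by the Lagrangian property, and surjectivity upgrades the rank of the map to the rank of its target. With that one step inserted in place of the half-lives-half-dies appeal, your proof is correct and coincides with the paper's.
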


\begin{proof}
Firstly, we have from Corollary \ref{cor:lagrangian-subspace} that
\begin{equation*}
\jmath_{\cB_S}^*(H^k_{\text{dR}}(\overline{\cB_S})) \subset H^k_{\text{dR}}(S)
\end{equation*}
is a Lagrangian subspace for the symplectic form $W : H^k_{\text{dR}}(S) \times H^k_{\text{dR}}(S) \to \RR$ given by 
\begin{equation*}
W([\kappa],[\lambda]) = \int_{S} \kappa \wedge \lambda,
\end{equation*}
for closed $\kappa,\lambda \in H^k_{\text{dR}}(\overline{\cB_S})$. Hence, writing $b_k = \dim H^k_{\text{dR}}(S)$, we have that $b_k$ is even and
\begin{equation*}
\dim{\jmath_{\cB_S}^*(H^k_{\text{dR}}(\overline{\cB_S}))} = \frac{b_k}{2}.
\end{equation*}

The map ${\jmath_{\cB_S}}_* : H_k(S) \to H_k(\overline{\cB_S})$ is surjective by Lemma \ref{lem:isomorphism-via-inclusions}. Thus, by de Rham's Theorem, $\jmath_{\cB_S}^* : H^k_{\text{dR}}(\overline{\cB_S}) \to H^k_{\text{dR}}(S)$ is injective. Hence, we can write that
\begin{equation*}
\dim{H^k_{\text{dR}}(\overline{\cB_S})} = \frac{b_k}{2}.
\end{equation*}
Therefore, by de Rham's Theorem, $\rank{H_k(S)} = b_k$ is even, and
\begin{equation}\label{eq:half-dim}
\rank{H_k(\overline{\cB_S})} = \frac{\rank{H_k(S)}}{2}.
\end{equation}
By Lemma \ref{lem:isomorphism-via-inclusions}, we have that
\begin{align*}
\rank {\ker {\jmath_{\cE_S}}_*} &= \rank{H_k(\overline{\cB_S})},\\
\rank{\ker {\jmath_{\cB_S}}_*} + \rank{ {\ker {\jmath_{\cE_S}}_*}} &= \rank{H_k(S)}.
\end{align*}
Comparing with Equation \eqref{eq:half-dim}, we see that
\begin{equation*}
\rank{\ker {\jmath_{\cB_S}}_*} = \frac{\rank{H_k(S)}}{2} = \rank{\ker {\jmath_{\cE_S}}_*}.
\end{equation*}
\end{proof}

We now have more than enough information to prove Proposition \ref{prop:homology-action-characterisation}.

\begin{proof}[Proof of Proposition \ref{prop:homology-action-characterisation}]
The proof is essentially just citing the appropriate lemmas. Let $S \subset \RR^3$ be an embedded 2-torus. From Theorem \ref{thm:detailed-separation}, $\RR^3 \setminus S$ has two connected components, one of which is bounded and the other unbounded. Let $\cB_S$ denote the bounded component and $\cE_S$ denote the unbounded component. The closures $\overline{\cB_S} = \cB_S \cup \Bd(\cB_S)$ and $\overline{\cE_S} = \cE_S \cup \Bd(\cE_S)$ are smooth regular domains with boundary 
\begin{equation*}
\partial \overline{\cB_S} = \Bd(\cB_S) = S =  \Bd(\cE_S) = \partial \overline{\cE_S}.
\end{equation*}
Consider the inclusions $\jmath_{\cB_S} : S \subset \overline{\cB_S}$ and $\jmath_{\cE_S} : S \subset \overline{\cE_S}$ and the associated homomorphisms
\begin{align*}
{\jmath_{\cB_S}}_* &: H_1(S) \to H_1(\overline{\cB_S})\\
{\jmath_{\cE_S}}_* &: H_1(S) \to H_1(\overline{\cE_S}).
\end{align*}
From Lemma \ref{lem:isomorphism-via-inclusions}, we have an internal direct sum
\begin{equation*}
H_1(S) = \cP_S \oplus \cT_S,
\end{equation*}
where $\cP_S$ and $\cT_S$ respectively denote, the poloidal and toroidal subgroups of $H_1(S)$ given by
\begin{equation*}
\cP_S \coloneqq \ker {\jmath_{\cB_S}}_* \cong \ZZ, \qquad \cT_S \coloneqq \ker {\jmath_{\cE_S}}_* \cong \ZZ,
\end{equation*}
where the ranks of $\cP_S$ and $\cT_S$ are deduced from Lemma \ref{lem:ranks-of-homology-subgroups}. Let $\cI : \RR^3 \to \RR^3$ be a diffemorphism such that $\cI(S) = S$ and let $\psi : S \to S$ denote the associated diffeomorphism on $S$. For the last claim, noting that if $\phi : \ZZ \to \ZZ$ is an isomorphism of groups, then necessarily $\phi = \pm \id$. Combining this with Lemma \ref{lem:isomorphism-via-inclusions}, the map $\psi_* : H_1(S) \to H_1(S)$ acts on the poloidal and toroidal subgroups as
\begin{align*}
\psi_*|_{\cP_S} &= \sigma_1 \id|_{\cP_S}\\
\psi_*|_{\cT_S} &= \sigma_2 \id|_{\cT_S},
\end{align*}
for some $\sigma_1,\sigma_2 \in \{+1,-1\}$. Again from Lemma \ref{lem:isomorphism-via-inclusions}, if $\cI$ is orientation-preserving, then $\sigma_1 = \sigma_2$. Otherwise, $\sigma_1 = -\sigma_2$.
\end{proof}

We end with a simple proposition which is helpful in understanding the toroidal index in the case of solid tori. This is used in sections \ref{sec:dyn-prop-and-cor} and \ref{sec:example-comp}.
\begin{proposition}\label{prop:toroidal-index-on-solid-torus}
Let $\cI : \RR^3 \to \RR^3$ be a diffeomorphism and let $M \subset \RR^3$ be an embedded solid torus. Assume that $\cI(M) = M$. Then, $S = \partial M$ is a 2-torus and $\cI(S) = S$. Let $\sigma_2$ denote the toroidal index of $\cI$ on $S$. Let $\Psi : M \to M$ denote the diffeomorphism on $M$ induced by $\cI$. Then, on the first homology $H_1(M)$ of $M$,
\begin{equation*}
\Psi_* = \sigma_2 \id.
\end{equation*}
\end{proposition}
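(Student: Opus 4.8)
The plan is to identify the solid torus $M$ with the closure $\overline{\cB_S}$ of the bounded component determined by $S = \partial M$, and then transport the action of $\psi_*$ on the toroidal subgroup $\cT_S$ to the action of $\Psi_*$ on $H_1(M)$ through the boundary inclusion $\jmath_{\cB_S} : S \subset \overline{\cB_S}$.

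First I would verify the identification $M = \overline{\cB_S}$. The interior $\Int M$ is a nonempty connected open set disjoint from $S$, so by Theorem \ref{thm:detailed-separation} it is contained in exactly one of $\cB_S$ or $\cE_S$; call this component $\cC$. Since $\cC \cap \partial M = \emptyset$, the sets $\Int M$ and $\cC \setminus M$ are disjoint open subsets of $\RR^3$ whose union is $\cC$, so connectedness of $\cC$ forces $\cC = \Int M$. As $\Int M$ is bounded while $\cE_S$ is unbounded, we must have $\cC = \cB_S$, whence $\cB_S = \Int M$ and $\overline{\cB_S} = \overline{\Int M} = M$. Consequently $\jmath_{\cB_S}$ is precisely the boundary inclusion $S = \partial M \subset M$, and $\cI(\overline{\cB_S}) = \overline{\cB_S}$ by Lemma \ref{lem:homolgy-group-invariants}, consistent with the hypothesis $\cI(M) = M$.

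Next I would invoke the Remark following Proposition \ref{prop:homology-action-characterisation} (i.e.\ Lemma \ref{lem:isomorphism-via-inclusions}) to conclude that the restriction
\begin{equation*}
\Phi \coloneqq {\jmath_{\cB_S}}_*|_{\cT_S} : \cT_S \to H_1(\overline{\cB_S}) = H_1(M)
\end{equation*}
is a group isomorphism; in particular $\Phi$ is surjective. Since $\cI$ restricts to $\Psi$ on $M$ and to $\psi$ on $S$, the equality $\Psi \circ \jmath_{\cB_S} = \jmath_{\cB_S} \circ \psi$ of maps $S \to M$ holds, and applying the functor $H_1$ yields the naturality relation
\begin{equation*}
{\jmath_{\cB_S}}_* \circ \psi_* = \Psi_* \circ {\jmath_{\cB_S}}_*.
\end{equation*}
Restricting this identity to $\cT_S$ and using that $\psi_*|_{\cT_S} = \sigma_2 \, \id|_{\cT_S}$ by the definition of the toroidal index, I obtain, for every $c \in \cT_S$,
\begin{equation*}
\Psi_*(\Phi(c)) = {\jmath_{\cB_S}}_*(\psi_* c) = \sigma_2 \, {\jmath_{\cB_S}}_*(c) = \sigma_2 \, \Phi(c).
\end{equation*}
Because $\Phi$ is surjective, every class in $H_1(M)$ equals $\Phi(c)$ for some $c \in \cT_S$, so $\Psi_* = \sigma_2 \, \id$ on $H_1(M)$, as claimed.

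The only genuinely geometric input is the identification $M = \overline{\cB_S}$; once that is in place, the statement is pure naturality of the inclusion-induced maps combined with the already-established isomorphism $\cT_S \cong H_1(M)$. I expect the main (and only minor) obstacle to be making the $M = \overline{\cB_S}$ step airtight, in particular ruling out the possibility $\Int M \subseteq \cE_S$, which is handled by playing the boundedness of $\Int M$ against the unboundedness of $\cE_S$.
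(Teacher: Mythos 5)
Your proposal is correct and follows essentially the same route as the paper's own proof: identify $M = \overline{\cB_S}$ via connectedness and boundedness of $\Int M$, then combine the naturality relation $\Psi_* \circ {\jmath_{\cB_S}}_* = {\jmath_{\cB_S}}_* \circ \psi_*$ with the isomorphism ${\jmath_{\cB_S}}_*|_{\cT_S} : \cT_S \to H_1(M)$ from Lemma \ref{lem:isomorphism-via-inclusions} and the defining relation $\psi_*|_{\cT_S} = \sigma_2\,\id$. The only cosmetic difference is that you phrase the topological step as a disjoint-open-union argument where the paper says $\Int M$ is open and closed in the component $C$; these are the same argument.
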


\begin{proof}
First, observe that $\overline{\cB_S} = M$ where $\cB_S$ is the bounded connected component of $\RR^3 \setminus S$ as defined in Proposition \ref{prop:homology-action-characterisation}. Indeed, because $\Int M$ is connected, it is contained in exactly one connected component $C$ of $\RR^n \setminus S$. Moreover $\Int M = M \cap C$, and so $\Int M$ is both open and closed in $C$. Thus, $\Int M = C$. Because $\Int M$ is bounded, it follows that $\Int M = \cB_S$. Hence, because $\Int M$ is dense in $M$, $M = \overline{\Int M} = \overline{\cB_S}$, as claimed.

Now, by definition, letting $\psi : S \to S$ denote the diffeomorphism on $S$ inherited from $\cI$, 
\begin{equation}\label{eq:def-of-tor-index}
\psi_*|_{\cT_S} = \sigma_2 \id,
\end{equation}
where $\cT_s \subset H_1(S)$ is the toroidal subgroup of $S$. Let $\jmath : \partial M \subset M$ denote the inclusion. Then, from the relations
\begin{equation*}
\Psi \circ \jmath = \jmath \circ \psi,    
\end{equation*}
we derive on first homology that
\begin{equation}\label{eq:commutive-relation}
\Psi_* \circ \jmath_* = \jmath_* \circ \psi_*.    
\end{equation}
Equations \eqref{eq:def-of-tor-index} and \eqref{eq:commutive-relation} imply
\begin{align*}
\Psi_* \circ \jmath_*|_{\cT_S} &= \jmath_* \circ \psi_*|_{\cT_S}\\
&= \jmath_* \circ (\sigma_2 \id)|_{\cT_S}\\
&= \sigma_2~\jmath_*|_{\cT_S}.
\end{align*}
However, because $M = \overline{\cB_S}$, Lemma \ref{lem:isomorphism-via-inclusions} implies that the restricted map
\begin{equation*}
{\jmath_*}|_{\cT_S} : \cT_S \to H_1(M),
\end{equation*}
is an isomorphism. Hence,
\begin{equation*}
\Psi_* = \sigma_2 \id.
\end{equation*}
\end{proof}

\section{Proof of Proposition \ref{prop:periodic-orbits} and Corollary \ref{cor:vac-is-periodic}}\label{sec:dyn-prop-and-cor}

The heart of Theorem \ref{thm:main} is Proposition \ref{prop:periodic-orbits}, which we now prove.

\begin{proof}[Proof of Proposition \ref{prop:periodic-orbits}]
We note in the beginning that $f$ is either everywhere positive or everywhere negative because $fX = \psi_* X$ has no zeros on $S$ and $S$ is connected. Now, for the main part of the proof. By Kolmogorov's Theorem (see, for instance \cite{Kolmogorov53} and \cite[Theorem 1.1]{Sternberg}), there exists a diffeomorphism $(x,y) : S \to \Sone \times \Sone$, a positive smooth function $0 < h \in C^{\infty}(S)$, and a vector $(a,b) \in \RR^2 \setminus \{(0,0)\}$ such that
\begin{equation*}
X = h(a \partial_x + b \partial_y).
\end{equation*}

Consider the curves $c_1,c_2 : [0,1] \to \Sone \times \Sone$ given by
\begin{equation*}
c_1(t) = ([t],[0]), \quad c_2(t) = ([0],[t]),
\end{equation*}
where $[\cdot] : \RR \to \RR/\ZZ$ denotes the quotient projection. By modifying the diffeomorphism $(x,y)$ by an action of $GL(2,\ZZ)$, we may assume that there exist generators $\gamma_1 \in \cP_S$ and $\gamma_2 \in \cT_S$ of the poloidal and toroidal subgroups of $S$ such that, under the pushfoward of $(x,y) : S \to \Sone \times \Sone$, $\gamma_1$ and $\gamma_2$ are the homology classes $[c_1]$ and $[c_2]$ respectively.

Because $\sigma_1^2 = 1 = \sigma_2^2$, we therefore have in $H^1_{\text{dR}}(S)$ that
\begin{equation}\label{eq:action-on-cohomology}
[\psi^*dx] = \sigma_1 [dx], \qquad [\psi^*dy] = \sigma_2 [dy].    
\end{equation}
Now, consider the closed 1-form
\begin{equation*}
\omega = -bdx + ady.    
\end{equation*}
Then,
\begin{equation*}
\omega(X) = 0.    
\end{equation*}
Hence, because $\psi_* X = fX$, we have $\omega(\psi_* X) = 0$ so that
\begin{equation*}
(\psi^*\omega)(X) = \omega(\psi_*X) \circ \psi = 0. 
\end{equation*}
Thus,
\begin{equation}\label{eq:zero-wedge}
\omega \wedge \psi^*\omega = 0.    
\end{equation}
On the other hand, from Equation \eqref{eq:action-on-cohomology},
\begin{equation*}
[\psi^*\omega] = [-b\sigma_1 dx + a\sigma_2 dy],    
\end{equation*}
and so, from Equation \eqref{eq:zero-wedge}, in $H^2_{\text{dR}}(S)$,
\begin{align*}
0 &= [\omega \wedge \psi^*\omega]\\
&= [-bdx + ady] \wedge [-b\sigma_1 dx + a\sigma_2 dy]\\
&= ab(\sigma_1-\sigma_2)[dx \wedge dy].
\end{align*}
Hence, because $\sigma_1 \neq \sigma_2$ (because $\psi$ is orientation-reversing), 
\begin{equation*}
ab = 0.    
\end{equation*}

Now, suppose that $b = 0$. We have then that
\begin{equation*}
X = ha\partial_x.    
\end{equation*}
In particular, the orbits of $X$ are purely poloidal. Moreover,
\begin{equation*}
\psi_* (ha\partial_x) = \psi_* X = f X = fha\partial_x.     
\end{equation*}
So, on the one hand,
\begin{align*}
(\psi^*dx)(ha\partial_x) &= dx(\psi_* (ha\partial_x)) \circ \psi\\
&= dx(fha\partial_x) \circ \psi\\
&= a (f\circ \psi)(h \circ \psi). 
\end{align*}
On the other hand, using Equation \eqref{eq:action-on-cohomology}, for some $g \in C^{\infty}(S)$,
\begin{align*}
(\psi^*dx)(ha\partial_x) &= (\sigma_1 dx + dg)(ha\partial_x)\\
&= a \sigma_1 h + ah~dg(\partial_x).
\end{align*}
Taking some critical point of $p \in S$ of $g$ (which exists by compactness of $S$), we have at $p$ that
\begin{equation*}
a f(\psi(p)) h(\psi(p)) = a \sigma_1 h(p)
\end{equation*}
and because $h > 0$ and $a \neq 0$ (as $b = 0$), we can write that
\begin{equation*}
f(\psi(p))/\sigma_1 = h(p)/(h(\psi(p))) > 0.
\end{equation*}
Therefore, because $f$ has no zeros,
\begin{equation*}
\sign(f) = \sigma_1.
\end{equation*}
We have therefore shown that if $b = 0$, then the orbits of $X$ are purely poloidal and $\sign(f) = \sigma_1$.

The proof is similar to show that if $a = 0$, then the orbits of $X$ are purely toroidal and $\sign(f) = \sigma_2$. In summary,
\begin{enumerate}
    \item $a = 0$ or $b = 0$ but $(a,b) \in \RR^2 \setminus \{(0,0)\}$.
    \item If $b = 0$, then the orbits of $X$ are purely poloidal and $\sign(f) = \sigma_1$.
    \item If $a = 0$, then the orbits of $X$ are purely toroidal and $\sign(f) = \sigma_2$.
\end{enumerate}
Hence, if $\sign(f) = \sigma_1$, then $\sign(f) \neq \sigma_2$ and so $b = 0$, implying that the orbits of $X$ are purely poloidal. If $\sign(f) = \sigma_2$, then $\sign(f) \neq \sigma_1$ and so $a = 0$, implying that the orbits of $X$ are purely toroidal.
\end{proof}

From Proposition \ref{prop:periodic-orbits} we can immediately prove Corollary \ref{cor:vac-is-periodic}.

\begin{proof}[Proof of Corollary \ref{cor:vac-is-periodic}]
We recall the setting. Let $M \subset \RR^3$ be a solid toroidal domain and let $\cI : \RR^3 \to \RR^3$ be an orientation-reversing isometry such that $\cI(M) = M$. Let $h$ be a non-trivial solution to Equation \eqref{eq:harmonic-equation}. Let $X$ be the vector field on $\partial M$ induced by $h$ (which exists because $h$ is tangent to $\partial M$). We wish to show that either $X$ is not area-preserving, or the orbits of $X$ are purely toroidal.

Consider the space
\begin{equation*}
\cH_N^\Gamma(M) = \{H \in \Gamma(TM) : \curl H = 0, \, \Div H = 0, \, H \cdot n = 0\}
\end{equation*}
of vacuum fields on $M$. Let $\flat : \Gamma(TM) \to \Omega^1(M)$ denote the musical isomorphism from vector fields to 1-forms. Then, for $H \in \cH_N^\Gamma(M)$, $dH^\flat = 0$, and the map $\cH_N^\Gamma(M) \to H^1_{\text{dR}}(M)$ given by
\begin{equation*}
H \mapsto [H^\flat],
\end{equation*}
is an isomorphism (see \cite[Theorem 2.6.1 and Section 3.5]{Schwarz}). Because $\Psi_*$ is an isometry we have that
\begin{equation}\label{eq:pres-whole-space}
\Psi_*(\cH_N^\Gamma(M)) = \cH_N^\Gamma(M).     
\end{equation}
On the other hand, by Proposition \ref{prop:toroidal-index-on-solid-torus}, we have on $H_1(M)$ that
\begin{equation*}
\Psi_* = \sigma_2 \id,
\end{equation*}
where $\sigma_2$ is the toroidal index of $\cI$ on $\partial M$. Dually, on $H^1_{\text{dR}}(M)$, we have that
\begin{equation}\label{eq:action-on-cohomology-for-solid-torus}
\Psi^* = \sigma_2 \id.
\end{equation}
It follows from equations \eqref{eq:pres-whole-space} and \eqref{eq:action-on-cohomology-for-solid-torus} that
\begin{equation*}
\Psi^*(h^\flat) = \sigma_2 h.
\end{equation*}
Hence, because $\sigma_2^2 = 1$ and $\Psi$ is an isometry,
\begin{equation}\label{eq:action-of-psi-on-h}
\Psi_* h = \sigma_2 h.
\end{equation}

On the other hand, considering the diffeomorphism $\psi : \partial M \to \partial M$ induced from $\cI$, Equation \eqref{eq:action-of-psi-on-h} implies that
\begin{equation}\label{eq:action-of-psi-on-X}
\psi_* X = \sigma_2 X.    
\end{equation}
We note that because $h$ is non-trivial, $X$ is not identically zero. Indeed, this is because $h|_{\partial M} = 0$ implies $h = 0$, which is known in a very general setting \cite[Theorem 3.4.4]{Schwarz}. See also \cite[Theorem 2.2]{Gerner21}.

Now, suppose that $X$ is area-preserving. Then, we show that $X$ has no zeros. To see this, we recall an observation made in \cite{Perrella22}. Give $\partial M$ the oriented Riemannian structure inherited from $M \subset \RR^3$. Let $\mu_\partial$ be the Riemannian area-form on $\partial M$. Then, if $\mu$ is a positively oriented area-form which $X$ preserves, we can write
\begin{equation*}
\mu = P\mu_{\partial},    
\end{equation*}
for some positive $P \in C^{\infty}(\partial M)$ and
\begin{equation*}
\cL_{PX}\mu_\partial = d i_{PX}\mu_{\partial} = d i_X \mu = \cL_{X}\mu = 0.
\end{equation*}
Thus, if $\omega = X^{\flat}$ and $\delta : \Omega^1(\partial M) \to \Omega^0(\partial M)$ denotes the co-differential, then,
\begin{equation*}
d\omega = 0, \qquad \delta P \omega = 0,    
\end{equation*}
where $d\omega = 0$ is implied by $d h^\flat = 0$. That is, $\omega$ is what is known as a $P$-harmonic 1-form. As per \cite[Theorem 7]{Perrella22}, either $\omega = 0$ or $\omega$ has no zeros. Because $X$ is not identically zero, it follows that $X$ has no zeros. Hence, from Equation \eqref{eq:action-of-psi-on-X} and Proposition \ref{prop:periodic-orbits}, the orbits of $X$ are purely toroidal.
\end{proof} 

In order to prove Theorem \ref{thm:main}, we now move on to investigate how the poloidal and toroidal index behave in the setting of trivially foliated tori.

\section{Action on homology by a diffeomorphism in \texorpdfstring{$\RR^3$}{R3} on foliated tori}\label{sec:extension-to-nested-toroidal-surfaces}

Here we investigate homological results from Section \ref{sec:poloidal-and-toroidal-homologies} further, and in particular, prove Proposition \ref{prop:basic-rigidity}. We will use the following lemma.

\begin{restatable}{lemma}{diffactcom}\label{lem:diffeomorphism-action-compatibility} 
Let $\epsilon > 0$ and $J \in \{[0,\epsilon),(-\epsilon,\epsilon)\}$ be an interval. Let $S$ be a 2-torus and $\Sigma : S \times J \to \RR^3$ be an embedding. Let $S_j = \Sigma(S \times \{j\})$ for $j \in J$. Suppose that $\cI : \RR^3 \to \RR^3$ is a diffeomorphism such that $\cI(S_j) = S_j$ for all $j \in J$. Let $(\sigma_1)_j$ and $(\sigma_2)_j$ be the poloidal and toroidal indices of $\cI$ on $S_j$ for $j \in J$. Then, for $j \in J$, setting $\sigma_1 = (\sigma_1)_0$ and $\sigma_2 = (\sigma_2)_0$, 
\begin{equation*}
(\sigma_1)_j = \sigma_1, \qquad (\sigma_2)_j = \sigma_2.     
\end{equation*}
\end{restatable}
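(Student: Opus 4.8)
The plan is to show that the poloidal and toroidal indices are locally constant in $j$, and then invoke connectedness of $J$ to conclude they are globally constant. Since the indices take values in the discrete set $\{+1,-1\}$, it suffices to prove that $j \mapsto (\sigma_1)_j$ and $j \mapsto (\sigma_2)_j$ are continuous, or equivalently locally constant. The key structural fact I would exploit is that the whole family $\{S_j\}_{j \in J}$ is swept out by a single embedding $\Sigma$, so the tori for nearby parameter values are canonically identified, and this identification should intertwine the action of $\cI$ up to homotopy.

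\begin{proof}[Proof sketch]
First I would fix $j_0 \in J$ and use the embedding $\Sigma$ to transport structure between nearby tori. For $j$ near $j_0$, the map $\Sigma(\cdot,j) \circ \Sigma(\cdot,j_0)^{-1} : S_{j_0} \to S_j$ is a diffeomorphism isotopic to the inclusion-type identification along the foliation; more precisely, the ambient isotopy $H_t = \Sigma(\cdot,(1-t)j_0 + tj) \circ \Sigma(\cdot,j_0)^{-1}$ gives a homotopy from $\id_{S_{j_0}}$ (composed with the identification) showing that the induced isomorphisms on $H_1$ are compatible. The essential point is that $\Sigma$ provides, for each pair $j_0, j$, a distinguished isomorphism $\Phi_{j_0,j} : H_1(S_{j_0}) \to H_1(S_j)$ which carries $\cP_{S_{j_0}}$ to $\cP_{S_j}$ and $\cT_{S_{j_0}}$ to $\cT_{S_j}$.

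To justify that $\Phi_{j_0,j}$ respects the poloidal and toroidal splittings, I would appeal to the Jordan--Brower setup of Theorem \ref{thm:detailed-separation} together with Lemma \ref{lem:homolgy-group-invariants}. The subgroups $\cP_{S_j} = \ker{\jmath_{\cB_{S_j}}}_*$ and $\cT_{S_j} = \ker{\jmath_{\cE_{S_j}}}_*$ are defined via inclusions into the bounded and unbounded components of $\RR^3 \setminus S_j$. Because the foliation is nested, for $j$ close to $j_0$ the map $\Sigma(\cdot,j)\circ\Sigma(\cdot,j_0)^{-1}$ extends to an ambient diffeomorphism (or at least a map) sending $\overline{\cB_{S_{j_0}}}$ into $\overline{\cB_{S_j}}$ and $\overline{\cE_{S_{j_0}}}$ into $\overline{\cE_{S_j}}$, up to homotopy; hence $\Phi_{j_0,j}(\cP_{S_{j_0}}) = \cP_{S_j}$ and $\Phi_{j_0,j}(\cT_{S_{j_0}}) = \cT_{S_j}$. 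The key naturality diagram to write down is the commuting square
\begin{equation*}
\begin{split}
\psi_{j_0} &= (\Phi_{j_0,j})^{-1} \circ \psi_j \circ \Phi_{j_0,j} \text{ on } H_1(S_{j_0}),
\end{split}
\end{equation*}
where $\psi_j : S_j \to S_j$ is the diffeomorphism induced by $\cI$; this holds because $\cI$ commutes with itself and $\Sigma$ is a fixed embedding, so $\cI \circ \Sigma(\cdot,j)$ and $\Sigma(\cdot,j)\circ\psi_j$ agree, making the indices intertwine.

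Once this conjugacy is established, the poloidal index of $\psi_j$ on $\cP_{S_j}$ equals the poloidal index of $\psi_{j_0}$ on $\cP_{S_{j_0}}$, since $\Phi_{j_0,j}$ identifies the two rank-one subgroups and conjugates the one map into the other; an automorphism of $\ZZ$ is $\pm\id$, and conjugation by an isomorphism preserves this sign. The same argument gives $(\sigma_2)_j = (\sigma_2)_{j_0}$. Therefore the maps $j \mapsto (\sigma_1)_j$ and $j \mapsto (\sigma_2)_j$ are locally constant; since $J$ is an interval (hence connected), they are constant on $J$, equal to $(\sigma_1)_0 = \sigma_1$ and $(\sigma_2)_0 = \sigma_2$ respectively.
\end{proof}

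The main obstacle I anticipate is rigorously establishing that the canonical identification $\Phi_{j_0,j}$ coming from $\Sigma$ actually preserves the poloidal/toroidal splitting, since this requires controlling how the bounded and unbounded complementary regions of the embedded tori deform as $j$ varies. The nested structure of the foliation is what makes this work: moving the parameter does not change which side is "inside" versus "outside," so the kernels of the inclusion-induced maps transform consistently. I would want to verify that $\cI$ restricted to the foliated region respects the leaf structure closely enough that the induced maps $\psi_j$ fit into the conjugacy diagram above, which is where the hypothesis $\cI(S_j) = S_j$ for \emph{all} $j$ is crucially used.
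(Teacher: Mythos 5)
Your proposal follows the same architecture as the paper's proof: identify the leaves via $\Sigma$, use Lemma \ref{lem:homolgy-group-invariants} to see that this identification respects the poloidal/toroidal splitting, and then intertwine the actions of $\cI$ on the different leaves. However, the step on which the whole argument hinges --- the conjugacy $(\psi_{j_0})_* = (\Phi_{j_0,j})^{-1}\circ(\psi_j)_*\circ\Phi_{j_0,j}$ --- is justified by an observation that is vacuous. The identity $\cI\circ\Sigma(\cdot,j) = \psi_j\circ\Sigma(\cdot,j)$ is simply the definition of $\psi_j$; it says nothing about how $\psi_{j_0}$ and $\psi_j$ compare across leaves. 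Writing $F_{j_0,j} = \Sigma(\cdot,j)\circ\Sigma(\cdot,j_0)^{-1} : S_{j_0}\to S_j$, the maps $F_{j_0,j}\circ\psi_{j_0}$ and $\psi_j\circ F_{j_0,j}$ are genuinely different maps $S_{j_0}\to S_j$ in general, because $\cI$ has no reason to commute with the $\Sigma$-identification pointwise. What is true is that they induce the same homomorphism on $H_1$, and proving this requires the idea your sketch omits: pass to the region $U=\Sigma(S\times J)$. Since $\cI(S_j)=S_j$ for \emph{every} $j$, $\cI$ restricts to a diffeomorphism $\Psi : U \to U$; each inclusion $\iota_j : S_j \subset U$ is a homotopy equivalence, and $\iota_j\circ F_{j_0,j}$ is homotopic to $\iota_{j_0}$ inside $U$ by sliding along the $\Sigma$-parameter. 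Hence
\begin{align*}
(\iota_j)_* \circ (F_{j_0,j})_* \circ (\psi_{j_0})_*
&= (\iota_{j_0})_* \circ (\psi_{j_0})_*
= \Psi_* \circ (\iota_{j_0})_* \\
&= \Psi_* \circ (\iota_j)_* \circ (F_{j_0,j})_*
= (\iota_j)_* \circ (\psi_j)_* \circ (F_{j_0,j})_*,
\end{align*}
and injectivity of $(\iota_j)_*$ yields the conjugacy. This is precisely the device in the paper's proof (equations \eqref{eq:action-of-Psi-for-all-j}--\eqref{eq:conclusion-from-Psi-action}, which compare the classes $[\iota_j\circ\gamma^j_P]$ and $[\iota_0\circ\gamma^0_P]$ inside $H_1(U)$); without it your argument does not close.

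There is a second, smaller gap. The ``ambient isotopy'' $H_t = \Sigma(\cdot,(1-t)j_0+tj)\circ\Sigma(\cdot,j_0)^{-1}$ you write down is only an isotopy of embeddings of $S_{j_0}$ into $\RR^3$, not a family of diffeomorphisms of $\RR^3$; yet to conclude $\Phi_{j_0,j}(\cP_{S_{j_0}})=\cP_{S_j}$ and $\Phi_{j_0,j}(\cT_{S_{j_0}})=\cT_{S_j}$ from Lemma \ref{lem:homolgy-group-invariants} you need $F_{j_0,j}$ to extend to a genuine diffeomorphism of $\RR^3$ --- the hedge ``or at least a map'' will not do as stated, since an arbitrary extension need not carry the bounded component to the bounded component. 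The required extension is the isotopy extension theorem, which the paper instead proves by hand (Lemma \ref{lem:isotopy}); because that hands-on construction needs an open parameter interval, the paper must also prove an extension lemma for embeddings (Lemma \ref{lem:extension-lemma-for-embeddings}) to cover the boundary leaf when $J=[0,\epsilon)$, a case your sketch does not address at all. Finally, once the conjugacy is established it holds for \emph{arbitrary} pairs $j_0,j\in J$, so your local-constancy-plus-connectedness scaffolding is unnecessary; the gaps above, not the global logic, are what need filling.
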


Lemma \ref{lem:diffeomorphism-action-compatibility} is proven in Appendix \ref{app:deferred-proofs-2}. We will apply Lemma \ref{lem:diffeomorphism-action-compatibility} to the setting of toroidally nested surfaces. We work with the following definition (see also \cite{Cardona23}).

\begin{definition}\label{def:toroial-lvl-sets}
Let $M \subset \RR^3$ be a solid toridal domain. That is, $M$ is a regular domain in $\RR^3$ diffeomorphic to $D \times \Sone$. Let $\gamma \subset \Int M$ be an embedded circle. A smooth function $p \in C^{\infty}(M)$ is said to have \emph{toroidally nested level sets with axis $\gamma$} if $\partial M$ is a regular level set of $p$ and each (non-empty) level set of $p$ is either
\begin{enumerate}
    \item a regular level set diffeomorphic to $\Sone\times\Sone$,
    \item or equal to $\gamma$.
\end{enumerate}
\end{definition}

For a smooth function $f : M \to \RR$ on a manifold $M$ with boundary, we let $\RegImag(f)$ denote the set of regular values of $f$ in its image $\Imag(f)$. Now for a lemma.

\begin{lemma}\label{lem:sigma-toroidal-level-sets}
Let $M \subset \RR^3$ be a solid toroidal domain. Let $\cI : \RR^3 \to \RR^3$ be a diffeomorphism such that $\cI(M) = M$. Then, $\cI(\partial M) = \partial M$. Let $\sigma_1,\sigma_2$ be the poloidal and toroidal index of $\cI$ on $\partial M$. Now, let $p \in C^{\infty}(M)$ be a smooth function on $M$ with toroidally nested level sets such that $p \circ \Psi = \Psi$, where $\Psi : M \to M$ is the diffeomorphism induced by $\cI$. For each $y \in \RegImag(p)$, $p^{-1}(y)$ is an embedded 2-torus such that $\cI(p^{-1}(y)) = p^{-1}(y)$ and if $(\sigma_1)_y,(\sigma_2)_y$ are the poloidal and toroidal index for $\cI$ on $S_y = p^{-1}(y)$, then,
\begin{equation*}
(\sigma_1)_y = \sigma_1, \qquad (\sigma_2)_y = \sigma_2.
\end{equation*}
\end{lemma}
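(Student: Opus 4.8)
The plan is to show that the pair of indices $y \mapsto ((\sigma_1)_y, (\sigma_2)_y)$ is locally constant on $\RegImag(p)$ and that $\RegImag(p)$ is connected, so that the common value may be read off at $y = p(\partial M)$, where it equals $(\sigma_1,\sigma_2)$ by hypothesis. (I read the hypothesis as $p \circ \Psi = p$.)

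First I would record the structural facts. Since $\cI$ is a diffeomorphism of $\RR^3$ with $\cI(M) = M$, it preserves the manifold-with-boundary structure of $M$, so $\cI(\partial M) = \partial M$; equivalently this follows from $p \circ \Psi = p$ and $\partial M$ being a level set. For $y \in \RegImag(p)$, the hypothesis that $p$ has toroidally nested level sets forces $S_y = p^{-1}(y)$ to be an embedded $2$-torus, and $p \circ \Psi = p$ together with $\cI|_M = \Psi$ gives $\cI(S_y) = \Psi(S_y) = S_y$. Thus the poloidal and toroidal indices $(\sigma_1)_y,(\sigma_2)_y$ are defined for every $y \in \RegImag(p)$.

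The core of the argument is local constancy via Lemma \ref{lem:diffeomorphism-action-compatibility}. Fix $y \in \RegImag(p)$. Because $y$ is a regular value, $\nabla p$ is nonvanishing on the compact torus $S_y$, and the flow of $V = \nabla p/|\nabla p|^2$, which raises $p$ at unit rate, produces an embedding $\Sigma : S_y \times J \to \RR^3$ with $\Sigma(S_y \times \{t\}) = p^{-1}(y+t)$, where $J = (-\epsilon,\epsilon)$ for an interior regular value and $J = [0,\epsilon)$, after reparametrising the sign of $t$ so that increasing $t$ moves into $M$, when $y = p(\partial M)$; these are exactly the two cases allowed in Lemma \ref{lem:diffeomorphism-action-compatibility}. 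Each leaf $p^{-1}(y+t)$ lies in $M$ and is $\cI$-invariant since $p \circ \Psi = p$, so the lemma yields $(\sigma_1)_{y+t} = (\sigma_1)_y$ and $(\sigma_2)_{y+t} = (\sigma_2)_y$ for all $t \in J$. Hence the index map is constant on a neighbourhood of $y$ in $\RegImag(p)$.

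It remains to see that $\RegImag(p)$ is connected. Since $M$ is compact and connected, $\Imag(p)$ is a compact interval $[c_0,c_1]$; since every level set is either a regular torus or equals $\gamma$, every critical point lies in $\gamma$ and $p^{-1}(p(\gamma)) = \gamma$, so the only critical value is $p(\gamma)$ and $\RegImag(p) = \Imag(p) \setminus \{p(\gamma)\} = p(M \setminus \gamma)$. The set $M \setminus \gamma$ (a solid torus with its core circle removed) is connected, so its continuous image $p(M \setminus \gamma)$ is connected; this forces $p(\gamma)$ to be an endpoint of $[c_0,c_1]$ and $\RegImag(p)$ to be an interval. A locally constant map on a connected set is constant, so the indices are constant on $\RegImag(p)$, and since $p(\partial M) \in \RegImag(p)$ with indices $(\sigma_1,\sigma_2)$ there, we get $(\sigma_1)_y = \sigma_1$ and $(\sigma_2)_y = \sigma_2$ for every $y$. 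The main obstacle is precisely this connectedness: one must rule out $p(\gamma)$ being an interior critical value, which would split $\RegImag(p)$ into two subintervals on which the indices could a priori differ. The remark that $M \setminus \gamma$ is connected and surjects onto $\RegImag(p)$ settles this cleanly, and it is the step that genuinely invokes the toroidally nested hypothesis rather than the regular-value machinery alone.
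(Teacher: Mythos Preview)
Your proof is correct and follows essentially the same approach as the paper: construct a local trivialisation of the level sets via the normalised gradient flow, invoke Lemma~\ref{lem:diffeomorphism-action-compatibility} for local constancy of the indices, and then use connectedness of $\RegImag(p)$ to propagate the values from $\partial M$. The only notable difference is that the paper simply asserts the critical value $p(\gamma)$ is an endpoint of $\Imag(p)$, whereas you give the cleaner justification that $M\setminus\gamma$ is connected and surjects onto $\RegImag(p)$; this is a welcome clarification rather than a different route.
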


\begin{proof}
As a preliminary, set $U = \{x \in M : \nabla p|_x \neq 0\}$ and consider the local vector field
\begin{equation*}
X = \frac{\nabla p}{\|\nabla p\|^2} \bigg|_U
\end{equation*}
on $U$. Let $y \in \RegImag(p)$. Then, $S_y = p^{-1}(y)$ is contained in $U$. Applying the Flowout Theorem \cite[Theorem 9.20]{Lee12} (in the case that $S_y \subset \Int M$) and applying the Boundary Flowout Theorem \cite[Theorem 9.24]{Lee12} (in the remaining case that $S_y = \partial M$), we find an $\epsilon > 0$, and an interval $J \in \{[0,\epsilon),(-\epsilon,\epsilon)\}$, and an open embedding $\Sigma : S_y \times J \to M$ such that, for $x \in S_y$,
\begin{equation*}
\Sigma(x,0) = x,
\end{equation*}
and such that, setting $c_x : J \to M$ to be the curve given by $c_x(t) = \Sigma(x,t)$,
\begin{equation*}
c_x'(t) = X(c_x(t)).
\end{equation*}
In particular, for $(x,t) \in S_y \times J$, it follows that
\begin{equation*}
p(\Sigma(x,t)) = y + t.     
\end{equation*}
For $j \in J$, $\Sigma(S_y \times j)$ is an embedded 2-torus on which $p$ is constant and regular. Because $p$ has toroidally nested level sets, it follows that
\begin{equation*}
p^{-1}(y + j) = \Sigma(S_y \times \{j\}).    
\end{equation*}
Hence, by Lemma \ref{lem:diffeomorphism-action-compatibility}, for all $j \in J$,
\begin{equation}\label{eq:continuity-condition}
(\sigma_1)_{y+j} = (\sigma_1)_y, \qquad (\sigma_2)_{y+j} = (\sigma_2)_y.
\end{equation}
Now, because $p$ has toroidally nested level sets, writing $\Imag(p) = [a,b]$ for some $a < b$, we have that $\RegImag(p) = \Imag(p) \setminus \{c\}$, where $p^{-1}(c) = \gamma$, the axis of $p$. Necessarily $c\in \{a,b\}$. Moreover, the value $d \in \{a,b\}$ with $d \neq c$ is the value of $p$ on $\partial M$. Therefore, from Equation \eqref{eq:continuity-condition}, the set
\begin{equation*}
V = \{y \in \RegImag(p) : (\sigma_1)_{y} = \sigma_1, \, (\sigma_2)_{y} = \sigma_2\},
\end{equation*}
is open and closed and contains $d$. Hence, $V = \RegImag(p)$ and the result follows.
\end{proof}

We now prove Proposition \ref{prop:basic-rigidity}.

\begin{proof}[Proof of Proposition \ref{prop:basic-rigidity}]
Let $M \subset \RR^3$ be a solid toroidal domain. Suppose that $B$ is a vector field on $M$ and $\rho$ is a smooth function on $M$ such that
\begin{equation*}
\Div B = 0, \qquad B \cdot \nabla \rho = 0 = \curl B \cdot \nabla \rho.
\end{equation*}
Suppose further that $\rho$ has toroidally nested level sets. In the following, we make use of the theory of $P$-harmonic forms studied in \cite{Perrella22}. Specifically, for $y \in \RegImag(\rho)$, $S_y = \rho^{-1}(y)$ is an embedded 2-torus and letting $\iota_y : S_y \subset M$ denote the inclusion, the 1-form $b_y = \iota_y^*B^{\flat}$, satisfies
\begin{equation}\label{eq:P-harmonic}
db_y = 0, \qquad \delta P_y b_y = 0, 
\end{equation}
where $\delta : \Omega^1(S_y) \to \Omega^0(S_y)$ is the co-differential on $S_y$ inherited from the metric on $M$ and the orientation induced by $\nabla p$, and
\begin{equation*}
P_y = \frac{1}{\|\nabla \rho\||_{S_y}}.    
\end{equation*}
Solutions to Equation \eqref{eq:P-harmonic} are called $P$-harmonic 1-forms. In particular, for each cohmology class $C \in H^1_{\text{dR}}(S_y)$, there exists a unique 1-form $\kappa_C \in \Omega^1(S_y)$ such that
\begin{equation}\label{eq:P-harmonic-perscribed-class}
d\kappa_C = 0, \qquad \delta P_y \kappa_C = 0, \qquad [\kappa_C] = C. 
\end{equation}

Let $\cI : \RR^3 \to \RR^3$ be an isometry such that $\cI(M) = M$ and $p \circ \Psi = p$. Because $\Psi$ is an isometry, we have that
\begin{equation*}
\|\nabla p\| \circ \Psi = \|\nabla p\|.
\end{equation*}
For any $y \in \RegImag(\rho)$, considering the isometry $\psi_y : S_y \to S_y$ induced by $\Psi : M \to M$, we therefore have that
\begin{equation*}
P_y \circ \psi_y = \psi_y.    
\end{equation*}
Thus, if $C \in H^1_{\text{dR}}(S_y)$ is a given cohomology class, and $\kappa_C$ is the unique solution to Equation \eqref{eq:P-harmonic-perscribed-class}, then $\psi_y^*\kappa_C$ satisfies
\begin{equation*} 
d\psi_y^*\kappa_C = 0, \qquad \delta P_y \psi_y^*\kappa_C = 0, \qquad [\psi_y^*\kappa_C] = \psi_y^*C. 
\end{equation*}
Thus, adopting the notation from Equation \eqref{eq:P-harmonic-perscribed-class}, we have that, for $C \in H^1_{\text{dR}}(S_y)$,
\begin{equation}\label{eq:summary-of-P-harmonic}
\psi_y^*(\kappa_C) = \kappa_{\psi_y^*C}.
\end{equation}

Now, assume that $\cI$ is orientation-preserving. Let $\sigma_1,\sigma_2$ be the poloidal and toroidal index of $\cI$ on $\partial M$. Then, we have that $\sigma_1 = \sigma_2$. Denote this common number as $\sigma$. Now, for $y \in \RegImag(\rho)$, setting $S_y = \rho^{-1}(y)$, we have that $\cI(S_y) = S_y$, and letting $(\sigma_1)_y,(\sigma_2)_y$ be the poloidal and toroidal index for $\cI$ on $S_y$, from Lemma \ref{lem:sigma-toroidal-level-sets}, we have that,
\begin{equation*}
(\sigma_1)_y = \sigma, \qquad (\sigma_2)_y = \sigma.
\end{equation*}
Considering the induced diffeomorphism $\psi_y : S_y \to S_y$ from $\cI$, we therefore have that the induced map $(\psi_y)_* : H_1(S_y) \to H_1(S_y)$ satisfies
\begin{equation*}
(\psi_y)_* = \sigma \id.    
\end{equation*}
However this means that the induced map $(\psi_y)^* : H^1_{\text{dR}}(S_y) \to H^1_{\text{dR}}(S_y)$ satisfies
\begin{equation*}
(\psi_y)^* = \sigma \id.    
\end{equation*}
Therefore, Equation \eqref{eq:summary-of-P-harmonic} reads
\begin{equation}\label{eq:summary-of-P-harmonic-for-orientation-pres}
\psi_y^*(\kappa_C) = \sigma \kappa_{C}.
\end{equation}
Hence, for each $y \in \RegImag(\rho)$, considering $b_y$ in Equation \eqref{eq:P-harmonic}, we have that
\begin{equation*}
\psi_y^* b_y = \sigma b_y.    
\end{equation*}
However, because $\psi_y$ is an isometry, $\sigma^2 = 1$, and $B$ is tangent to $S_y$, it follows that
\begin{equation*}
(\Psi_* B)|_{S_y} = \sigma B|_{S_y}.    
\end{equation*}
But, because $\rho^{-1}(\RegImag(\rho)) = M \setminus \gamma$ is dense in $M$ (as $\gamma$ is an embedded 1-dimensional manifold, having measure zero in $M$), it follows that
\begin{equation*}
\Psi_* B = \sigma B.
\end{equation*}

For the orientation-reversing case, observe that $\cJ = \cI \circ \cI$ is orientation-preserving. Letting $\sigma_1,\sigma_2$ be the poloidal and toroidal index of $\cI$ on $\partial M$, the poloidal and toroidal indices of $\cJ$ are $\sigma_1^2 = 1$ and $\sigma_2^2 = 1$. Thus, the previous paragraph may be applied to $\cJ$ with $\sigma = 1$. That is, one obtains that
\begin{equation*}
(\Psi \circ \Psi)_* B = B.
\end{equation*}
\end{proof}

\section{Proof of the main result}\label{sec:proof-main-res}

The proof of Theorem \ref{thm:main} makes use of the following observation which is also used in other parts of the paper.

\begin{remark}\label{rem:B-pres-implies-p-and-J-pres}
Let $M$ be a compact, connected, and oriented Riemannian 3-manifold with boundary. Suppose that $\Psi : M \to M$ is an isometry and $(B,p)$ is an MHD equilibrium such that
\begin{equation}\label{eq:psi-action-on-B}
\Psi_*B = \sigma B,
\end{equation}
for some $\sigma \in \{+1,-1\}$. We will show that this implies $\Psi^*p = p$. Indeed, letting $g$ be the metric on $M$ and $b = i_B g$, it is well-known (see \cite{Peralta-Salas21}) that the equation $\curl B \times B = \nabla p$ can be equivalently written as
\begin{equation*}
i_Bdb = dp.
\end{equation*}
Equation \eqref{eq:psi-action-on-B} and $\sigma^2 = 1$ imply that $\Psi^*B = (\Psi^{-1})_* B = \sigma B$. Hence, $\Psi^*b = \sigma b$ because $\Psi^*g = g$. Thus,
\begin{equation*}
\Psi^*dp = \Psi^*(i_Bdb) = i_{\Psi^*B}(d\Psi^*b) = \sigma^2i_Bdb = dp.
\end{equation*}
Thus, because $M$ is connected, there exists a constant $c$ so that
\begin{equation*}
\Psi^*p = p + c.
\end{equation*}
Because $M$ is compact and connected, $\Imag(p)$ is a closed and bounded interval in $\RR$. Because $\Psi$ is a surjective, $\Imag(\Psi^*p) = \Imag(p)$. Thus, $c = 0$ and so, as claimed, 
\begin{equation*}
\Psi^*p = p.
\end{equation*}
Of course, $J = \curl B$ also satisfies a relation. If $\mu$ denotes the Riemanian volume-form on $(M,g)$, then, because $M$ is connected,
\begin{equation*}
\Psi^*\mu = \kappa \mu,
\end{equation*}
with $\kappa = 1$ if $\Psi$ is orientation-preserving and $\kappa = -1$ if $\Psi$ is orientation-reversing. The vector field $J$ satisfies
\begin{equation*}
i_J\mu = db.    
\end{equation*}
Hence, on the one hand,
\begin{equation*}
\Psi^*(i_J\mu) = i_{\Psi^*J}\Psi^*\mu = i_{(k\Psi^*J)}\mu     
\end{equation*}
but on the other hand,
\begin{equation*}
\Psi^*(i_J\mu) = \Psi^*(db) = d\Psi^*b = \sigma db = i_{\sigma J}\mu. 
\end{equation*}
Hence (because $k^2 = 1$),
\begin{equation*}
\Psi_* J = k \sigma J.    
\end{equation*}
\end{remark}

We now prove Theorem \ref{thm:main}.

\begin{proof}[Proof of Theorem \ref{thm:main}]
We recall the setting. Let $\cI : \RR^3 \to \RR^3$ be an orientation-reversing isometry. Let $M \subset \RR^3$ be a solid toroidal domain. Suppose that $\cI(M) = M$. Then $\cI(\partial M) = \partial M$. Let $\sigma_1$ and $\sigma_2$ denote respectively the poloidal and toroidal index of $\cI$ on the 2-torus $\partial M$. Let $(B,p)$ be an MHD equilibrium on $M$ such that $\Psi$ and $B$ satisfy Equation \eqref{eq:psi-action-on-B} for some $\sigma \in \{+1,-1\}$, where $\Psi : M \to M$ is the diffeomorphism on $M$ induced by $\cI$. Suppose further that $p$ has toroidally nested level sets. Let $y \in \RegImag(p)$ be a regular value in the image of $p$. Then, $B$ and $J = \curl B$ are tangent to the the 2-torus $S_y = p^{-1}(y)$. Let $B_y$ and $J_y$ be the vector fields respectively induced by $B$ and $J$. We wish to prove the following.
\begin{enumerate}
    \item If $\sigma = \sigma_1$, then the orbits of $B_y$ are purely poloidal and the orbits of $J_y$ are purely toroidal.
    \item If $\sigma = \sigma_2$, then the orbits of $B_y$ are purely toroidal and the orbits of $J_y$ are purely poloidal.
\end{enumerate}

From Remark \ref{rem:B-pres-implies-p-and-J-pres}, we have that
\begin{equation*}
\Psi^*p = p
\end{equation*}
and so, by Lemma \ref{lem:sigma-toroidal-level-sets}, the 2-torus $S_y$ satisfies
\begin{equation*}
\cI(S_y) = S_y,    
\end{equation*}
and the poloidal and toroidal index of $\cI$ on $S_y$ are, respectively, $\sigma_1$ and $\sigma_2$.

A well-known consequence \cite{Burby20,Perrella22,Perrella23} of $\Div B = 0 = \Div J$ and $B \cdot \nabla p = 0 = J \cdot \nabla p$ is that the induced vector fields $J_y$ and $B_y$ on $S_y$ preserve the area-form
\begin{equation*}
\tilde{\mu}_y = P_y \mu_y,
\end{equation*}
where $\mu_y$ is the area-form on $S_y = p^{-1}(y)$ inherited from the Euclidean metric on $M \subset \RR^3$, and
\begin{equation*}
P_y = \frac{1}{\|\nabla p\||_{S_y}}.    
\end{equation*}
If $\psi_y : S_y \to S_y$ denotes the diffeomorphism on $S_y$ induced by $\cI$, the relation $\Psi_*B = \sigma B$ implies that
\begin{equation*}
(\psi_y)_*B_y = \sigma B_y.
\end{equation*}
From Remark \ref{rem:B-pres-implies-p-and-J-pres} we also know that $\Psi_* J = -\sigma J$, which implies that
\begin{equation*}
(\psi_y)_*J_y = -\sigma J_y.
\end{equation*}
However, both $B_y$ and $J_y$ are non-vanishing on $S_y$ because $\nabla p|_{S_y}$ is non-vanishing. Hence, the result follows from Proposition \ref{prop:periodic-orbits}.
\end{proof}

\section{Example computation}\label{sec:example-comp}

We will now consider some examples in which it is simple to compute the toroidal index. As a consequence, we will prove Corollary \ref{cor:Stellarator-symmetry}. To start, let $\cI : \RR^3 \to \RR^3$ be any diffeomorphism such that
\begin{equation*}
\cI(\cZ) = \cZ,    
\end{equation*}
where $\cZ = \myspan\{(0,0,1)\}$ is the $z$-axis. Then, consider the curve $\gamma_0 : [0,1] \to \RR^3 \setminus \cZ$ given by
\begin{equation}\label{eq:gamma0-def}
\gamma_0(t) = (\cos(2\pi t), \sin(2\pi t),0).     
\end{equation}
Because $H_1(\RR^3 \setminus \cZ) \cong \ZZ$ and $\cI$ descends to a diffeomorphism $\cJ : \RR^3\setminus \cZ \to \RR^3 \setminus \cZ$, there exists (a unique) $\sigma \in \{-1,1\}$ such that, on $H_1(\RR^3 \setminus \cZ)$,
\begin{equation}\label{eq:homoloogy-not-on-Z}
\cJ_* = \sigma \id.
\end{equation}
The $\sigma$ in Equation \eqref{eq:homoloogy-not-on-Z} can be easily be deduced from the fact that
\begin{equation}\label{eq:action-on-gamma0}
[\cJ \circ \gamma_0] = \sigma[\gamma_0].
\end{equation}
We will discuss a situation in which the $\sigma$ solving equations \eqref{eq:homoloogy-not-on-Z} and \eqref{eq:action-on-gamma0} is also the toroidal index of $\cI$ on the boundary of a solid torus.

Let $M$ be a solid torus in $\RR^3$ which wraps around $\cZ$ once, in the sense that
\begin{enumerate}
    \item $M \cap \cZ = \emptyset$ and
    \item letting $\iota : M \subset \RR^3 \setminus \cZ$ denote the inclusion, $\iota_* : H_1(M) \to H_1(\RR^3 \setminus \cZ)$ is surjective.
\end{enumerate}
Let $\Psi : M \to M$ be the diffeomorphism induced by $\cI$ on $M$. Then, we have that
\begin{equation*}
\iota \circ \Psi = \cJ \circ \iota.
\end{equation*}
Therefore, on first homology, denoting by $\id$ the identities on $H_1(\RR^3 \setminus \cZ)$ and $H_1(M)$,
\begin{align*}
\iota_* \circ \Psi_* &= \cJ_* \circ \iota_*\\
&= (\sigma \id) \circ \iota_*\\
&= \iota_* \circ (\sigma \id).
\end{align*}
However, because $H_1(M) \cong \ZZ$ and $\iota_* : H_1(M) \to H_1(\RR^3 \setminus \cZ)$ is a surjective homomorphism between groups isomorphic to $\ZZ$, and so $\iota_*$ is also injective. Hence,
\begin{equation}\label{eq:Psi-action-on-first-hom}
\Psi_* = \sigma \id.
\end{equation}

On the other hand, $\cI(\partial M) = \partial M$ and letting $\sigma_2$ denote the toroidal index of $\cI$ on $\partial M$, by Proposition \ref{prop:toroidal-index-on-solid-torus},
\begin{equation*}
\Psi_* = \sigma_2 \id.
\end{equation*}
Therefore,
\begin{equation}\label{eq:tor-index-is-sigma}
\sigma_2 = \sigma. 
\end{equation}
We now use this computation to prove Corollary \ref{cor:Stellarator-symmetry}.

\begin{proof}[Proof of Corollary \ref{cor:Stellarator-symmetry}]
Let $\cI : \RR^3 \to \RR^3$ be the orientation-preserving isometry given by
\begin{equation}
\cI(x,y,z) = (x,-y,-z)  
\end{equation}
and suppose that $\cI(M) = M$. Let $\Psi : M \to M$ denote the corresponding diffeomorphism. We wish to prove that, for any MHD equilibrium $(B,p)$ on $M$ with $p$ having toroidally nested level sets, one has the equivalence
\begin{equation*}
p \circ \Psi = p \iff \Psi_* B = - B.  
\end{equation*}

To find the corresponding $\sigma$ solving equations \eqref{eq:homoloogy-not-on-Z} and \eqref{eq:action-on-gamma0}, we compute for $t \in [0,1]$ that
\begin{equation*}
\cJ(\gamma_0(t)) = \cI(\gamma_0(t)) = (\cos(2\pi t), -\sin(2\pi t),0) = \gamma_0(1-t).    
\end{equation*}
Hence,
\begin{equation*}
[\cJ \circ \gamma_0] = -[\gamma_0].
\end{equation*}
That is, $\sigma = -1$. Hence, by Equation \eqref{eq:tor-index-is-sigma}, the toroidal index $\sigma_2$ of $\cI$ on $\partial M$ is $\sigma_2 = \sigma = -1$. Because $\cI$ is orientation-preserving, it follows from Proposition \ref{prop:homology-action-characterisation} that the poloidal index $\sigma_1$ of $\cI$ on $\partial M$ is also $-1$.

Now, let $(B,p)$ be an MHD equilibrium on $M$ where $p$ has toroidally nested level sets. Suppose that $p \circ \Psi = p$. Then, from Proposition \ref{prop:basic-rigidity}, we immediately obtain that
\begin{equation*}
\Psi_*B = \sigma B = -B.    
\end{equation*}
For the other direction, suppose that $\Psi_* B = -B$. Then, immediately from Remark \ref{rem:B-pres-implies-p-and-J-pres},
\begin{equation*}
p \circ \Psi = p.    
\end{equation*}
\end{proof}

\section{Some remarks on magnetic axes}\label{sec:magnetic axes}

In this section, we prove Proposition \ref{prop:nice-axis-behaviour}. To start, let $(B,p)$ be an MHD equilibrium on a Riemannian manifold $M$ with boundary. Then, setting
\begin{equation*}
J = \curl B,    
\end{equation*}
one has that
\begin{equation}\label{eq:laplacian-of-p}
\begin{split}
\Delta p &= \Div(\nabla p)\\
&= \Div(\curl B \times B)\\
&= \curl(\curl B) \cdot B - \curl B \cdot \curl B\\
&= \curl J \cdot B - J \cdot J.
\end{split}
\end{equation}
Equation \eqref{eq:laplacian-of-p} is helpful for gaining information about zeros of $B$ on Morse-Bott critical circles of $p$.

\begin{lemma}\label{lem:implication-of-zeros}
Let $\gamma \subset \Int M$ be an embedded circle which is a Morse-Bott critical submanifold for $p$. The latter meaning that, $d p|_\gamma = 0$, and, for all $x \in \gamma$ the Hessian $\hess(p)|_x : T_x M \times T_x M \to \RR$ is non-degenerate once restricted to one (and hence any) plane $\pi \subset T_x M$ which is transverse to $\gamma$ at $x$. Then, the following holds.
\begin{enumerate}
    \item If $p$ is locally maximised on $\gamma$, then, for $x \in \gamma$, $J|_x = 0 \implies B_x \neq 0$.
    \item If $p$ is locally minimised on $\gamma$, then $B|_\gamma$ has no zeros.
\end{enumerate} 
\end{lemma}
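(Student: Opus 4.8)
The plan is to analyze the sign of the Laplacian $\Delta p$ on the critical circle $\gamma$ and compare it against the formula $\Delta p = \curl J \cdot B - J \cdot J$ from Equation \eqref{eq:laplacian-of-p}. The key observation is that a Morse–Bott maximum forces $\Delta p \leq 0$ on $\gamma$, while a Morse–Bott minimum forces $\Delta p \geq 0$ on $\gamma$; these constraints, combined with the structure of the right-hand side, should pin down the behaviour of the zeros of $B$ and $J$.

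\textbf{Step 1: Compute the sign of $\Delta p$ on $\gamma$.} Since $\gamma$ is a Morse–Bott critical submanifold with $dp|_\gamma = 0$, for $x \in \gamma$ the Hessian $\hess(p)|_x$ restricted to a transverse plane $\pi \subset T_x M$ is non-degenerate, and its restriction to the tangent direction $T_x\gamma$ vanishes (since $p$ is constant along $\gamma$). The Laplacian $\Delta p|_x$ is, up to sign conventions, the trace of $\hess(p)|_x$ with respect to the metric, which equals the trace of the restriction to $\pi$ (the $\gamma$-direction contributing nothing). If $p$ is locally maximised on $\gamma$, this restricted Hessian is negative definite, so $\Delta p|_x < 0$ strictly; if locally minimised, it is positive definite, so $\Delta p|_x > 0$ strictly. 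I would record these strict inequalities carefully, as the strictness is what drives the conclusion.

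\textbf{Step 2: Exploit the formula for $\Delta p$ at a point where $B$ or $J$ vanishes.} For the minimum case, suppose for contradiction that $B|_x = 0$ for some $x \in \gamma$. Then at $x$ the term $\curl J \cdot B = 0$, and $\Delta p|_x = -|J|_x|^2 \leq 0$, contradicting $\Delta p|_x > 0$. Hence $B|_\gamma$ has no zeros, proving part (ii). For the maximum case (part (i)), suppose $J|_x = 0$; then $\Delta p|_x = -|J|_x|^2 = 0$ at that point from the $-J\cdot J$ term, but also $\curl J \cdot B$ must be evaluated — here I must be more careful, since $\curl J \cdot B$ need not vanish just because $J|_x = 0$. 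The cleaner route is: if $J|_x = 0$ and also $B|_x = 0$, then $\Delta p|_x = 0$, contradicting the strict inequality $\Delta p|_x < 0$ from the maximum. This gives the desired implication $J|_x = 0 \implies B|_x \neq 0$.

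\textbf{The main obstacle} I anticipate is justifying that $\Delta p|_x$ equals the trace of the transverse Hessian cleanly, i.e. handling the relationship between the full Riemannian Laplacian (which involves covariant derivatives and the metric trace of the full Hessian) and the Morse–Bott transverse Hessian, including confirming that the direction along $\gamma$ contributes zero to the trace at a critical point. At a critical point of $p$ the Hessian is a well-defined symmetric bilinear form independent of connection, and $\Delta p$ is its metric trace; since $\gamma$ is a critical submanifold, $p$ restricted to $\gamma$ is constant so the tangential second derivative along $\gamma$ vanishes, leaving only the transverse contribution. I would verify this decomposition explicitly in an adapted orthonormal frame to ensure the strict sign of $\Delta p|_x$ follows from definiteness of the transverse Hessian, which is precisely the Morse–Bott hypothesis.
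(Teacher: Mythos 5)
Your proposal is correct and follows essentially the same route as the paper: both rest on the identity $\Delta p = \curl J \cdot B - J\cdot J$ together with the strict sign of $\Delta p|_\gamma = \tr\hess(p)|_\gamma$ forced by the Morse--Bott definiteness of the transverse Hessian, and your contradiction arguments (both fields vanishing forces $\Delta p|_x = 0$ in the maximum case; $B|_x = 0$ forces $\Delta p|_x \leq 0$ in the minimum case) are just the contrapositive form of the paper's inequalities $\curl J \cdot B|_x < J\cdot J|_x$ and $\curl J \cdot B|_x > J\cdot J|_x \geq 0$. Your extra care in Step 1 about why the tangential direction contributes nothing to the trace is a sound filling-in of a point the paper asserts without comment.
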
 

\begin{proof}
Recall that for $x \in M$, the trace of $\hess(p)|_x$ is $\Delta p|_x$. Suppose that $p$ is locally maximised on $\gamma$. Then, for $x \in \gamma$, from the non-degeneracy assumption placed on $\hess(p)|_x$
\begin{equation*}
\Delta p|_x = \tr{\hess(p)|_x} < 0,
\end{equation*}
and so, by Equation \eqref{eq:laplacian-of-p},
\begin{equation*}
\curl J \cdot B|_x < J \cdot J |_x.    
\end{equation*}
Therefore, we obtain the implication
\begin{equation*}
J|_x = 0 \implies B|_x \neq 0.    
\end{equation*}

Suppose that $p$ is locally minimised on $\gamma$. Then, for $x \in \gamma$,
\begin{equation*}
\Delta p|_x = \tr{\hess(p)|_x} > 0,
\end{equation*}
and so, by Equation \eqref{eq:laplacian-of-p},
\begin{equation*}
\curl J \cdot B|_x > J \cdot J |_x.    
\end{equation*}
Moreover, because $J \cdot J \geq 0$, it follows that
\begin{equation*}
\curl J \cdot B|_x > 0.
\end{equation*}
Therefore, $B|_x \neq 0$. That is, $B|_\gamma$ has no zeros.
\end{proof}

We can strengthen the result of Lemma \ref{lem:implication-of-zeros} (to prove Proposition \ref{prop:nice-axis-behaviour}) through the well-known fact that $J$ and $B$ commute; namely
\begin{equation}\label{eq:J-B-commute}
[J,B] = 0.
\end{equation}
The following lemma prepares use of Equation \eqref{eq:J-B-commute}

\begin{lemma}\label{lem:decend}
Let $\gamma \subset \Int M$ be a Morse-Bott critical submanifold of $p$. Then, $J$ and $B$ are tangent to $\gamma$, and so there exist unique smooth vector fields $J_\gamma$ and $B_\gamma$ on $\gamma$ such that
\begin{equation*}
T\ell \circ J_\gamma = J \circ \ell, \qquad T\ell \circ B_\gamma = B \circ \ell, \qquad [J_\gamma,B_\gamma] = 0,
\end{equation*}
where $\ell : \gamma \subset M$ denotes the inclusion.
\end{lemma}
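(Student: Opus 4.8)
The plan is to proceed in three stages: first show that both $B$ and $J$ are tangent to the critical circle $\gamma$, then extract the restricted vector fields $B_\gamma$ and $J_\gamma$, and finally deduce that they commute from the fact that $[J,B]=0$ on all of $M$. The tangency is the substantive point, and everything else is formal.

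For the tangency, I would begin from the equilibrium equation $\curl B \times B = \nabla p$. Dotting with $B$ and with $J = \curl B$ respectively gives $B \cdot \nabla p = 0$ and $J \cdot \nabla p = 0$, since $\curl B \times B$ is orthogonal to both $B$ and $J$. Equivalently, $\cL_B p = 0$ and $\cL_J p = 0$, so the local flows $\phi^B_t$ and $\phi^J_t$ (which exist near $\gamma \subset \Int M$) preserve $p$. The key observation is that any flow preserving $p$ also preserves its critical set: if $p \circ \phi_t = p$ then $\phi_t^* dp = dp$, so at a point $x$ with $dp_x = 0$ one gets $dp_{\phi_t(x)} \circ (d\phi_t)_x = 0$, and since $(d\phi_t)_x$ is invertible, $\phi_t(x)$ is again critical. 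Thus the orbits of $B$ and of $J$ through a point of $\gamma$ remain in $\mathrm{Crit}(p)$.

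The main obstacle is upgrading ``stays in the critical set'' to ``stays in $\gamma$'', and this is exactly where the Morse--Bott hypothesis is used. Working in a tubular neighbourhood of $\gamma$ with coordinates $(s,v)$, $s$ along $\gamma$ and $v$ normal, the conditions $dp|_\gamma = 0$ and nondegeneracy of $\hess(p)|_x$ on transverse planes give $\partial_v p(s,v) = \hess(p)(s,0)\,v + O(|v|^2)$ with $\hess(p)(s,0)$ invertible; hence $\nabla p \neq 0$ for all small $v \neq 0$, so $\gamma$ is precisely the critical set in a neighbourhood of itself. Combining this with the previous step: for $x \in \gamma$ and $t$ small, $\phi^B_t(x)$ is critical and close to $\gamma$, hence lies in $\gamma$; therefore $B_x = \frac{d}{dt}\big|_{t=0}\phi^B_t(x) \in T_x\gamma$, and likewise $J_x \in T_x\gamma$. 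Since $\ell : \gamma \subset M$ is an embedding and $B,J$ are smooth and tangent to its image, the restrictions $B_\gamma = (T\ell)^{-1} \circ B \circ \ell$ and $J_\gamma = (T\ell)^{-1}\circ J \circ \ell$ are well-defined smooth vector fields on $\gamma$, unique by injectivity of $T\ell$, and are $\ell$-related to $B$ and $J$.

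For the bracket, I would invoke naturality of the Lie bracket under $\ell$-relatedness: since $B_\gamma$ is $\ell$-related to $B$ and $J_\gamma$ to $J$, the bracket $[J_\gamma,B_\gamma]$ is $\ell$-related to $[J,B]$. By Equation \eqref{eq:J-B-commute} we have $[J,B] = 0$, so $T\ell \circ [J_\gamma,B_\gamma] = [J,B]\circ \ell = 0$, and injectivity of $T\ell$ forces $[J_\gamma,B_\gamma] = 0$, completing the proof.
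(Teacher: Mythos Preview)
Your proof is correct, but your route to the tangency of $B$ and $J$ along $\gamma$ differs from the paper's. You argue dynamically: the flows of $B$ and $J$ preserve $p$, hence preserve the critical set, and the Morse--Bott hypothesis guarantees that $\gamma$ coincides with the critical set in a neighbourhood of itself, so the flows through points of $\gamma$ stay in $\gamma$ and the fields are tangent. The paper instead works pointwise with the Hessian: since $dp(X)=0$ identically for $X\in\{B,J\}$, one has $\hess(p)|_x(X|_x,Y|_x)=Y(dp(X))|_x=0$ for every $Y$ and every $x\in\gamma$, so $X|_x$ lies in the kernel of $\hess(p)|_x$, which by the Morse--Bott condition is exactly $T_x\gamma$. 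The paper's argument is shorter and purely algebraic, avoiding tubular coordinates and flow reasoning; your argument has the minor advantage of making explicit the geometric fact that $\gamma$ is isolated in $\operatorname{Crit}(p)$, though this is not needed elsewhere. Your treatment of the restricted fields and the bracket via $\ell$-relatedness matches the paper's.
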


\begin{proof}
Let $X \in \{B,J\}$. Then, we have that $dp(X) = 0$. Hence, for any vector field $Y$, and for any $x \in \gamma$,
\begin{equation*}
\hess(p)|_x(X|_x,Y|_x) = \hess(p)|_x(Y|_x,X|_x) = Y(dp(X))|_x = 0.   
\end{equation*}
Thus, $X|_x$ is in the kernel of $\hess(p)|_x$. Hence, by the definition of Morse-Bott critical submanifold, we must have that $X|_x$ is tangent to $\gamma$ at $x$. Hence, $J$ and $B$ are tangent to $\gamma$. Because $\gamma$ is a submanifold, there exist unique vector fields $J_\gamma$ and $B_\gamma$ on $\gamma$ such that
\begin{equation*}
T\ell \circ J_\gamma = J \circ \ell, \qquad T\ell \circ B_\gamma = B \circ \ell,
\end{equation*}
where $\ell : \gamma \subset M$ denotes the inclusion. Then, from Equation \eqref{eq:J-B-commute}, one finds that
\begin{equation*}
T\ell \circ [J_\gamma,B_\gamma] = [J,B] \circ \ell = 0,
\end{equation*}
and so $[J_\gamma,B_\gamma] = 0$.
\end{proof}

We now state a final lemma to prove Proposition \ref{prop:nice-axis-behaviour}.

\begin{lemma}\label{lem:commuting-1dim-fields}
Let $X,Y$ be smooth vector fields on a connected 1-manifold $L$ such that
\begin{equation*}
[X,Y] = 0,    
\end{equation*}
and such that, for all $x \in L$,
\begin{equation}\label{eq:implication}
X|_x = 0 \implies Y|_x \neq 0.    
\end{equation}
Then, either
\begin{enumerate}
    \item $X = 0$ and $Y$ has no zeros, or
    \item $X$ has no zeros and $Y = k X$ for some $k \in \RR$.
\end{enumerate}
\end{lemma}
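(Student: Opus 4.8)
The plan is to trivialise the tangent bundle of $L$ and reduce everything to a scalar Wronskian identity. Since the tangent bundle of any connected $1$-manifold is trivial, I would first fix a nowhere-vanishing vector field $E$ on $L$ and write $X = fE$ and $Y = gE$ for uniquely determined smooth functions $f,g \in C^\infty(L)$. A direct computation of the Lie bracket gives
\begin{equation*}
[X,Y] = [fE,gE] = \bigl(f\,E(g) - g\,E(f)\bigr)E,
\end{equation*}
so the hypothesis $[X,Y]=0$ is equivalent to the vanishing of the Wronskian $W := f\,E(g) - g\,E(f)$ on all of $L$. In the same language, $X|_x = 0$ means exactly $f(x)=0$, so the implication \eqref{eq:implication} says precisely that $f$ and $g$ have no common zero.

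The heart of the argument is to show that the zero set $Z := f^{-1}(0)$ is open; being a level set of a continuous function it is automatically closed, so connectedness of $L$ will force $Z = \emptyset$ or $Z = L$. To prove openness, I would take $x_0 \in Z$; the no-common-zeros condition gives $g(x_0)\neq 0$, hence $g$ is nonzero on a neighbourhood $N$ of $x_0$, and on $N$ the condition $W=0$ rearranges to $E(f) = (E(g)/g)\,f$. Choosing a coordinate $t$ on $N$ with $E = d/dt$, this is the linear first-order ODE $f' = a(t)f$ with $a := E(g)/g$ smooth; since $f(x_0)=0$, uniqueness of solutions to a linear scalar ODE forces $f \equiv 0$ near $x_0$. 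Thus every point of $Z$ has a neighbourhood contained in $Z$, so $Z$ is open.

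With the dichotomy $Z \in \{\emptyset, L\}$ in hand, the two cases of the lemma fall out. If $Z = L$ then $f \equiv 0$, i.e.\ $X = 0$; the no-common-zeros condition then forces $g$ to be nowhere vanishing, so $Y$ has no zeros, which is alternative (i). If $Z = \emptyset$ then $f$ is nowhere zero, so $g/f$ is a well-defined smooth function on $L$ whose derivative is $E(g/f) = (E(g)f - gE(f))/f^2 = W/f^2 = 0$; since $L$ is connected this ratio equals a constant $k \in \RR$, giving $g = kf$ and hence $Y = kX$, which is alternative (ii). I expect the only delicate point to be the openness of $Z$: the whole lemma hinges on recognising that $W=0$ becomes a \emph{linear} ODE near any zero of $f$ (because there $g\neq 0$), so that ODE uniqueness propagates the zero locally; a separate argument that all derivatives of $f$ vanish at a zero would be messier, and I would avoid it in favour of this one-line uniqueness statement.
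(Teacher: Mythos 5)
Your proof is correct, and it shares the paper's overall skeleton --- prove that the zero set of $X$ is both open and closed, invoke connectedness of $L$ to get the dichotomy, then show the ratio of the two fields is constant --- but the mechanism you use for the key openness step is genuinely different. The paper argues dynamically: since $[X,Y]=0$, the field $X$ is invariant under the (local) flow of $Y$, so zeros of $X$ propagate along $Y$-orbits; because $Y$ is nonzero at any zero of $X$ by \eqref{eq:implication} and $\dim L = 1$, the Inverse Function Theorem makes the piece of orbit through such a point an open neighbourhood inside the zero set. You instead trivialise $TL$ by a global frame $E$ (legitimate, since connected $1$-manifolds are parallelizable), write $X = fE$, $Y = gE$, observe that $[X,Y]=0$ is exactly the Wronskian identity $fE(g)-gE(f)=0$, and near a zero of $f$ (where $g \neq 0$) read this as the linear ODE $E(f) = (E(g)/g)\,f$, so that uniqueness for linear scalar ODEs forces $f\equiv 0$ locally. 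Your route is more computational and self-contained: it needs no flow theory, only ODE uniqueness, and it makes explicit that linearity of the ODE is what propagates the zero. The paper's route is coordinate-free and avoids choosing a frame, at the cost of invoking flow invariance. The concluding steps are essentially the same argument in different clothing: the paper writes $Y = fX$ (possible since $X$ is non-vanishing and $\dim L=1$) and deduces $X(f)=0$ from the bracket, while you differentiate $g/f$ and find its derivative is the vanishing Wronskian over $f^2$; both then use connectedness to conclude the ratio is a constant $k$.
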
 

\begin{proof}
Consider the set $\cZ = \{x \in L : X|_x = 0\}$. Then $\cZ$ is a closed subset of $L$. On the other hand, the relation $[X,Y] = 0$ implies that $X$ is invariant under the (local) flow of $Y$, and so $\cZ$ is closed under the flow of $Y$. That is, for each $x \in \cZ$, we can find $\epsilon > 0$ such that the maximally defined time-$t$ flow $\psi^Y_t : L_t \to L_{-t}$ contains $x$ in its domain for $|t| < \epsilon$ and $\psi^Y_t(x) \in \cZ$. However, $Y|_x \neq 0$ from Equation \eqref{eq:implication} and so, because $\dim L = 1$, the Inverse Function Theorem implies that $\psi^Y((-\epsilon,\epsilon) \times \{x\}) \subset \cZ$ is an open neighborhood of $x$ in $L$. Hence, $\cZ$ is also an open subset of $L$. Because $L$ is connected, it follows that $\cZ = L$ or $\cZ = \emptyset$.

That is, $X = 0$ or $X$ has no zeros. If $X = 0$, then Equation \eqref{eq:implication} implies that $Y$ has no zeros. If $X$ has no zeros, then $Y = fX$ for some smooth $f \in C^{\infty}(L)$ (because $\dim L = 1$). Moreover,
\begin{equation*}
X(f)X = [X,fX] = [X,Y] = 0,    
\end{equation*}
and so $X(f) = 0$. Thus, because $X$ has no zeros, $df = 0$ (as $\dim L = 1$) and so $f$ is constant because $L$ is connected. That is, $Y = kX$ for some $k \in \RR$.
\end{proof}

We now prove Proposition \ref{prop:nice-axis-behaviour}.

\begin{proof}[Proof of Proposition \ref{prop:nice-axis-behaviour}]
Let $\gamma \subset \Int M$ be an embedded circle which is a Morse-Bott critical submanifold for $p$. Suppose that $p$ is locally maximised on $\gamma$. Then, from Lemma \ref{lem:implication-of-zeros}, for $x \in \gamma$, $J|_x = 0$ implies $B_x \neq 0$. Inheriting the notation from Lemma \ref{lem:decend}, we therefore have that, for $x \in \gamma$,
\begin{equation*}
{J_\gamma}|_x = 0 \implies {B_\gamma}|_x \neq 0.
\end{equation*}
Taking the contra-positive, for $x \in \gamma$,
\begin{equation*}
{B_\gamma}|_x = 0 \implies {J_\gamma}|_x \neq 0.
\end{equation*}
Hence, from Lemma \ref{lem:commuting-1dim-fields}, either
\begin{enumerate}
    \item $B_\gamma = 0$ and $J_\gamma$ has no zeros, or
    \item $B_\gamma$ has no zeros and $J_\gamma = k B_\gamma$ for some $k \in \RR$.
\end{enumerate}

Suppose $p$ is locally minimised on $\gamma$. Then, immediately from Lemma \ref{lem:implication-of-zeros}, $B|_\gamma$ has no zeros. Therefore, one vacuously has the implication ${B_\gamma}|_x = 0 \implies {J_\gamma}|_x \neq 0$ for $x \in \gamma$. Hence, from Lemma \ref{lem:commuting-1dim-fields}, $J_\gamma = k B_\gamma$ for some $k \in \RR$.
\end{proof}

\section{Action by a diffeomorphism on the first cohomology of surfaces}\label{sec:sympectic-algebra}

Here we will consider some algebra related to how some diffeomorphisms act on the first cohomology of closed oriented connected 2-manifolds. In Section \ref{sec:sym-lin-alg}, we will recall some basic linear algebra, and in Section \ref{sec:app-to-surf}, we will apply this to surfaces and prove an extension of Proposition \ref{prop:periodic-orbits}.

\subsection{Some linear algebra}\label{sec:sym-lin-alg}

Let $V$ be a finite dimensional vector space and let $L : V \to V$ be a linear map such that
\begin{equation*}
L^2 = \id.    
\end{equation*}
For $\sigma \in \{+1,-1\}$, we set
\begin{equation*}
E_{\sigma} = \{v \in V : Lv = \sigma v\}.
\end{equation*}
A well-known result is the following.

\begin{lemma}
The vector space $V$ admits the internal direct sum
\begin{equation*}
V = E_{+1} \oplus E_{-1}.    
\end{equation*}
\end{lemma}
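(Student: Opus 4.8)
The plan is to prove the eigenspace decomposition $V = E_{+1} \oplus E_{-1}$ for an involution $L$ directly, using the classical projection trick. Since we are working over a field in which $2$ is invertible (here $\RR$), I would define the two operators
\begin{equation*}
P_{+} = \tfrac{1}{2}(\id + L), \qquad P_{-} = \tfrac{1}{2}(\id - L),
\end{equation*}
and verify that they are the projections onto the respective eigenspaces. The key algebraic facts follow immediately from $L^2 = \id$: one checks that $P_{+} + P_{-} = \id$, that $P_{\pm}^2 = P_{\pm}$ (using $L^2 = \id$ to simplify the cross terms), and that $P_{+}P_{-} = P_{-}P_{+} = 0$. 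These are one-line computations, so I would not belabour them.

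Next I would establish that $\Imag P_{\sigma} = E_{\sigma}$ for $\sigma \in \{+1,-1\}$. For the inclusion $\Imag P_{\sigma} \subseteq E_{\sigma}$, note that $L P_{+} = \tfrac{1}{2}(L + L^2) = \tfrac{1}{2}(L + \id) = P_{+}$, so any vector of the form $P_{+}v$ satisfies $L(P_{+}v) = P_{+}v$, i.e.\ it lies in $E_{+1}$; the computation for $P_{-}$ is analogous, giving $L P_{-} = -P_{-}$. For the reverse inclusion $E_{\sigma} \subseteq \Imag P_{\sigma}$, observe that if $Lv = \sigma v$ then $P_{\sigma} v = v$, so $v$ is in the image of $P_{\sigma}$.

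With these identifications in hand, the decomposition is immediate. For any $v \in V$, the relation $P_{+} + P_{-} = \id$ gives
\begin{equation*}
v = P_{+} v + P_{-} v \in E_{+1} + E_{-1},
\end{equation*}
so $V = E_{+1} + E_{-1}$. To see the sum is direct, suppose $v \in E_{+1} \cap E_{-1}$; then $Lv = v$ and $Lv = -v$, forcing $v = -v$, hence $2v = 0$ and $v = 0$. Thus $E_{+1} \cap E_{-1} = \{0\}$, and the sum is internal and direct.

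There is no real obstacle here: the result is standard finite-dimensional linear algebra, and the only structural hypothesis that matters is that the characteristic of the field is not $2$, which is used both in defining $P_{\pm}$ (the factor $\tfrac{1}{2}$) and in concluding $v = 0$ from $2v = 0$. The finite-dimensionality stated in the hypothesis is not actually needed for this argument, so if anything the mild subtlety is simply to note that the projection construction works verbatim in any vector space over a field where $2$ is a unit; I would mention this only in passing rather than weaken the stated lemma.
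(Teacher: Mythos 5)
Your proof is correct and is essentially the same as the paper's: your projections $P_{\pm} = \tfrac{1}{2}(\id \pm L)$ produce exactly the decomposition $v = \tfrac{v+Lv}{2} + \tfrac{v-Lv}{2}$ used there, with the operator-theoretic packaging (idempotency, $P_+P_-=0$) being harmless extra verification. Both arguments rest on the same trick and the same trivial-intersection observation, so there is nothing to reconcile.
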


\begin{proof}
Of course, we have 
\begin{equation}\label{eq:trivial-intersection}
E_{+1} \cap E_{-1} = \{0\}.
\end{equation}
Moreover, for each $v \in V$, we have
\begin{equation*}
v = v_1 + v_{-1},    
\end{equation*}
where
\begin{equation*}
v_1 = \frac{v+Lv}{2}, \qquad v_{-1} = \frac{v-Lv}{2},
\end{equation*}
and
\begin{equation*}
Lv_1 = v_1, \qquad Lv_{-1} = -v_{-1}.   
\end{equation*}
Hence,
\begin{equation}\label{eq:sum}
V = E_{+1} + E_{-1}.    
\end{equation}
The result now follows from equations \eqref{eq:trivial-intersection} and \eqref{eq:sum}.
\end{proof}

\begin{lemma}
Suppose that $\dim V = N$ and $\Lambda$ is a lattice in $V$ of rank $N$. That is, $\Lambda$ is a subset of $V$ for which there exist linearly independent $e_1,...,e_N \in V$ such that
\begin{equation*}
\Lambda = \mathbb{Z}e_1 \oplus ... \oplus \mathbb{Z}e_N. 
\end{equation*}
Suppose that $L(\Lambda) \subset \Lambda$. Then, for $\sigma \in \{+1,-1\}$,
\begin{equation*}
\myspan{(E_{\sigma} \cap \Lambda)} = E_{\sigma}.
\end{equation*}
\end{lemma}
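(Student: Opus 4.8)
The plan is to produce, for each $\sigma \in \{+1,-1\}$, an explicit spanning set of $E_\sigma$ consisting of lattice points, obtained by symmetrising the lattice generators against $L$. The inclusion $\myspan(E_\sigma \cap \Lambda) \subseteq E_\sigma$ is immediate, since $E_\sigma$ is a subspace containing $E_\sigma \cap \Lambda$, so the entire content of the statement is the reverse inclusion $E_\sigma \subseteq \myspan(E_\sigma \cap \Lambda)$.

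First I would record that, because $\Lambda$ has rank $N = \dim V$, its generators $e_1,\dots,e_N$ are linearly independent in $V$ and therefore form a basis of $V$ over $\RR$. Consequently every $v \in V$ can be written as $v = \sum_{i=1}^N c_i e_i$ with real coefficients $c_i$. Next, for each $i$ I would form the symmetrised element $f_i = e_i + \sigma L e_i$. Since the hypothesis $L(\Lambda) \subset \Lambda$ gives $L e_i \in \Lambda$, and $\Lambda$ is closed under addition, we have $f_i \in \Lambda$; and a direct computation using $L^2 = \id$ gives $L f_i = L e_i + \sigma e_i = \sigma(e_i + \sigma L e_i) = \sigma f_i$, so $f_i \in E_\sigma$. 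Hence each $f_i$ lies in $E_\sigma \cap \Lambda$.

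Finally, given an arbitrary $v \in E_\sigma$, the defining relation $Lv = \sigma v$ yields $v = \tfrac{1}{2}(v + \sigma L v)$. Expanding $v = \sum_i c_i e_i$ and using linearity of $L$, this becomes $v = \tfrac{1}{2}\sum_i c_i (e_i + \sigma L e_i) = \tfrac{1}{2}\sum_i c_i f_i$, which exhibits $v$ as a real linear combination of the elements $f_i \in E_\sigma \cap \Lambda$. This establishes $E_\sigma \subseteq \myspan(E_\sigma \cap \Lambda)$ and, together with the trivial reverse inclusion, completes the argument.

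I do not expect a genuine obstacle here; the only point requiring care is that, although each $f_i = e_i + \sigma L e_i$ genuinely lies in $\Lambda$ (no division is performed inside the lattice, which is why $L(\Lambda)\subset\Lambda$ is exactly the right hypothesis), the coefficients $\tfrac{1}{2}c_i$ appearing in the final expansion are real rather than integral. This is harmless, since we are only asked to span $E_\sigma$ over $\RR$ and not to generate the abelian group $E_\sigma \cap \Lambda$ over $\ZZ$, so the factor $\tfrac{1}{2}$ causes no difficulty.
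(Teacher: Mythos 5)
Your proof is correct and follows essentially the same approach as the paper: both hinge on symmetrising the lattice generators to $e_i + \sigma L e_i$, which lie in $E_\sigma \cap \Lambda$ precisely because $L(\Lambda) \subset \Lambda$, and then spanning over $\RR$. Your use of the projection identity $v = \tfrac{1}{2}(v + \sigma L v)$ is in fact slightly more direct than the paper's version, which expands $v$ over both families $e_i \pm L e_i$ and then invokes $E_\sigma \cap E_{-\sigma} = \{0\}$ to kill the unwanted component.
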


\begin{proof}
Taking $e_1,..,e_N$ as above, $e_1,...,e_N$ forms a basis for $V$. Hence, for $v \in E_{\sigma}$, there exist unique $a_1,...,a_N \in \RR$, that
\begin{equation*}
v = a_1 e_1 + ... + a_N e_N.   
\end{equation*}
For each $i \in \{1,...,N\}$ and $s \in \{+1,1\}$, we set
\begin{equation*}
e_{i,s} = e_i + s Le_i.    
\end{equation*}
Then $e_{i,s} \in E_s$ for all $i \in \{1,...,N\}$ and $s \in \{+1,1\}$. Moreover,
\begin{align*}
v &= \frac{a_1}{2}e_{1,{\sigma}} + \frac{a_1}{2}e_{1,{-\sigma}} + ... + \frac{a_N}{2}e_{N,{\sigma}} + \frac{a_N}{2}e_{N,{-\sigma}}\\
&= \left(\frac{a_1}{2}e_{1,{\sigma}} + ... + \frac{a_N}{2}e_{N,{\sigma}}\right) + \left(\frac{a_1}{2}e_{1,{-\sigma}} + ... + \frac{a_N}{2}e_{N,{-\sigma}}\right).
\end{align*}
We recall that $v \in E_\sigma$ and the first term in the last equality is in $\myspan(E_\sigma \cap \Lambda) \subset E_\sigma$ while the second term is in $\myspan(E_{-\sigma} \cap \Lambda) \subset E_{-\sigma}$. Thus, because $E_\sigma \cap E_{-\sigma} = \{0\}$, the second term vanishes and so
\begin{equation*}
v \in \myspan{(E_{\sigma} \cap \Lambda)}. 
\end{equation*}
This proves $E_{\sigma} \subset \myspan{(E_{\sigma} \cap \Lambda)}$. The other containment is clear, and so we are done.
\end{proof}

We have a final lemma which considers a symplectic form on $V$ which is compatible with $L$. 

\begin{lemma}
Let $\omega \in \Lambda^2(V)$ be a symplectic form on $V$. Suppose that
\begin{equation*}
L^*\omega = -\omega.    
\end{equation*}
Let $\sigma \in \{+1,-1\}$. Then $E_{\sigma}$ is a Lagrangian subspace. That is,
\begin{equation*}
E_{\sigma}^\perp = E_{\sigma},    
\end{equation*}
where $\perp$ denotes the symplectic complement. In particular,
\begin{equation*}
\dim E_{+1} = \frac{\dim V}{2} = \dim E_{-1}.
\end{equation*}
\end{lemma}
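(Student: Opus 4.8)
The plan is to show that $E_\sigma$ is Lagrangian by proving the self-orthogonality $E_\sigma \subseteq E_\sigma^\perp$ directly and then matching dimensions. For the self-orthogonality, I would take any two vectors $u,v \in E_\sigma$, so that $Lu = \sigma u$ and $Lv = \sigma v$, and compute $\omega(u,v)$. Using $L^*\omega = -\omega$, which unwinds to $\omega(Lu, Lv) = -\omega(u,v)$, and substituting $Lu = \sigma u$, $Lv = \sigma v$, I get $\sigma^2 \omega(u,v) = -\omega(u,v)$. Since $\sigma^2 = 1$, this forces $2\omega(u,v) = 0$, hence $\omega(u,v) = 0$. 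Therefore $E_\sigma$ is an isotropic subspace, i.e. $E_\sigma \subseteq E_\sigma^\perp$.

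It then remains to upgrade isotropy to the Lagrangian condition $E_\sigma = E_\sigma^\perp$, which is purely a dimension count. First I would note the general fact that for any subspace $W$ of a symplectic vector space $(V,\omega)$, nondegeneracy of $\omega$ gives $\dim W + \dim W^\perp = \dim V$. Combined with the isotropy $E_\sigma \subseteq E_\sigma^\perp$ just established, this yields $\dim E_\sigma \le \tfrac{1}{2}\dim V$, applied with $\sigma$ ranging over both $+1$ and $-1$. On the other hand, the first lemma of this subsection gives the internal direct sum $V = E_{+1} \oplus E_{-1}$, so $\dim E_{+1} + \dim E_{-1} = \dim V$. The only way both dimensions can be at most half of $\dim V$ while summing to $\dim V$ is if each equals exactly $\tfrac{1}{2}\dim V$. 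This simultaneously establishes the dimension formula
\begin{equation*}
\dim E_{+1} = \frac{\dim V}{2} = \dim E_{-1}
\end{equation*}
and, via $\dim E_\sigma = \tfrac{1}{2}\dim V = \dim E_\sigma^\perp$ together with $E_\sigma \subseteq E_\sigma^\perp$, the equality $E_\sigma = E_\sigma^\perp$.

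I do not anticipate a genuine obstacle here; the argument is a clean two-line computation followed by a standard dimension count. The only point requiring mild care is the bookkeeping in the dimension step: one must invoke the decomposition $V = E_{+1}\oplus E_{-1}$ from the earlier lemma rather than trying to prove the halving of dimensions from the symplectic structure alone, since a priori the isotropy bounds only give $\dim E_\sigma \le \tfrac{1}{2}\dim V$ for each $\sigma$ separately. Pairing the two upper bounds with the exact sum from the direct-sum decomposition is what pins down equality and forces both eigenspaces to be Lagrangian.
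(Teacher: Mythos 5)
Your proof is correct, and the second half takes a genuinely different route from the paper. The isotropy step ($E_\sigma \subseteq E_\sigma^\perp$ via $\omega(u,v) = -\omega(Lu,Lv) = -\sigma^2\omega(u,v)$) is identical to the paper's. But where you upgrade isotropy to the Lagrangian condition by a dimension count --- invoking the standard identity $\dim W + \dim W^\perp = \dim V$ together with the direct sum $V = E_{+1} \oplus E_{-1}$ from the first lemma of the subsection, so that the two bounds $\dim E_{\pm 1} \le \tfrac{1}{2}\dim V$ must both be equalities --- the paper instead proves the reverse inclusion $E_\sigma^\perp \subseteq E_\sigma$ directly: for $u \in E_\sigma^\perp$ and arbitrary $v \in V$, the element $\sigma v + Lv$ lies in $E_\sigma$, and expanding $0 = \omega(u, \sigma v + Lv)$ using $L^*\omega = -\omega$ and $L^2 = \id$ gives $\omega(\sigma u - Lu, v) = 0$, whence nondegeneracy forces $Lu = \sigma u$. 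Consequently the logical order is reversed: the paper derives the half-dimension formula as a corollary of Lagrangian-ness, while you derive Lagrangian-ness from the dimension formula. The paper's argument is self-contained within the lemma (it never uses the decomposition $V = E_{+1}\oplus E_{-1}$), which is a mild structural advantage; your argument avoids the slightly clever manipulation with $\sigma v + Lv$ at the cost of importing the earlier lemma and the rank identity for symplectic complements. Both are complete and standard; nothing is missing from your version.
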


\begin{proof}
Let $u \in E_{\sigma}$. Let $v \in E_{\sigma}$. Then, we have
\begin{equation*}
\omega(u,v) = -\omega(Lu,Lv) = -\omega(\sigma u,\sigma v) = -\omega(u,v),    
\end{equation*}
and so $\omega(u,v) = 0$. Hence, $u \in E_{\sigma}^\perp$. That is,
\begin{equation*}
E_{\sigma} \subset E_{\sigma}^\perp.
\end{equation*}
Now let $u \in E_{\sigma}^\perp$. Then, let $v \in V$. We have that $\sigma v + Lv \in E_{\sigma}$. Thus,
\begin{align*}
0 &= \omega(u,\sigma v + Lv)\\
&= \omega(u,\sigma v) + \omega(u, Lv)\\
&= \omega(\sigma u, v) - \omega(Lu, LLv)\\
&= \omega(\sigma u, v) - \omega(Lu, v)\\
&= \omega(\sigma u - Lu, v).
\end{align*}
Now, because $\omega$ is symplectic, and $v \in V$ was arbitrary, $\sigma u - Lu = 0$ and so $u \in E_{\sigma}$. Thus,
\begin{equation*}
E_{\sigma}^\perp \subset E_{\sigma}.
\end{equation*}
The conclusion follows.
\end{proof}

\subsection{Application to surfaces}\label{sec:app-to-surf}

Let $S$ be a closed, connected, and oriented 2-manifold. We say that a cohomology class $C \in H^1_{\text{dR}}(S)$ has \textit{integral periods} if, for all $\gamma \in H_1(S)$,
\begin{equation*}
 \int_{\gamma}C \in \ZZ,   
\end{equation*}
where the integral represents the de Rham pairing between $H^1_{\text{dR}}(S)$ and $H_1(S)$. By de Rham's Theorem, the set
\begin{equation*}
H^1_{\text{dR},\ZZ}(S) = \{C \in H^1_{\text{dR}}(S) : C \text{ has integral periods}\},   
\end{equation*}
is a lattice of rank $b$, where $b = \dim H^1_{\text{dR}}(S)$.

We now explain the relevance between the lattice $H^1_{\text{dR},\ZZ}(S)$ and periodic orbits. To begin, for a smooth function $\phi : S \to \Sone$, we set
\begin{equation*}
d\phi = \phi^*dx,    
\end{equation*}
where $dx$ is the standard 1-form on $\Sone = \RR/\ZZ$. Then, one has the following well-known lemma \cite{Tischler70,Farber04}.

\begin{lemma}\label{lem:circle-valued-characterisation}
Let $S$ be a manifold and $\omega \in \Omega^1(S)$ be a closed 1-form. Then, $[\omega]$ has integral periods if and only if $\omega = d\phi$ for some smooth $\phi : S \to \Sone$.
\end{lemma}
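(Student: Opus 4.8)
The plan is to prove both implications, the reverse one (right to left) being immediate and the forward one (left to right) requiring an explicit construction by integration. For the easy direction, suppose $\omega = d\phi = \phi^*dx$ for some smooth $\phi : S \to \Sone$. Then for any class in $H_1(S)$ represented by a smooth loop $c : \Sone \to S$, I would compute
\[
\int_c \omega = \int_{\Sone} c^*\phi^*dx = \int_{\Sone} (\phi \circ c)^*dx = \deg(\phi \circ c) \in \ZZ,
\]
using that $\int_{\Sone} dx = 1$ for $\Sone = \RR/\ZZ$, so the integral of the pullback records the degree of the circle map $\phi \circ c$. Thus $[\omega]$ has integral periods.

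For the converse, assume $[\omega]$ has integral periods; it suffices to treat $S$ connected, handling each connected component separately otherwise. Fix a basepoint $x_0 \in S$. For each $x \in S$, choose a piecewise-smooth path $\gamma_x$ from $x_0$ to $x$ and set
\[
\phi(x) = \left[\,\int_{\gamma_x} \omega\,\right] \in \RR/\ZZ = \Sone.
\]
I would first verify this is well-defined: if $\gamma_x'$ is a second path from $x_0$ to $x$, then concatenating $\gamma_x$ with the reversal of $\gamma_x'$ yields a loop whose class lies in $H_1(S)$, so the integral period hypothesis forces $\int_{\gamma_x}\omega - \int_{\gamma_x'}\omega \in \ZZ$, and the two candidate values of $\phi(x)$ coincide in $\RR/\ZZ$.

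Next I would establish smoothness and the identity $d\phi = \omega$ simultaneously by a local argument. Around any $x \in S$, pick a contractible open neighbourhood $U$; since $\omega$ is closed, the Poincar\'e lemma furnishes a smooth $f : U \to \RR$ with $df = \omega|_U$. Extending $\gamma_x$ by a path inside $U$ shows that, for $y \in U$, one has $\phi(y) = [f(y) + c_U]$ in $\RR/\ZZ$ for a constant $c_U$ independent of $y$. Hence $\phi|_U$ is the composition of the smooth map $y \mapsto f(y) + c_U$ with the quotient projection $\RR \to \RR/\ZZ$, so $\phi$ is smooth on $U$ with $d\phi|_U = df = \omega|_U$. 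Since $U$ was arbitrary, $\phi$ is smooth and $d\phi = \omega$ globally.

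The main obstacle is the converse direction, and within it the cleanest point of care is reconciling the globally multivalued primitive $\int \omega$ with a genuine single-valued circle-valued function: the integral period condition is precisely what collapses the multivaluedness into an integer ambiguity, which vanishes upon passing to $\RR/\ZZ$, while the local Poincar\'e-lemma primitives simultaneously supply smoothness and the equality $d\phi = \omega$.
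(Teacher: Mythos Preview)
Your proof is correct and is precisely the standard argument. Note that the paper itself does not give a proof of this lemma: it is stated as well-known with references to Tischler and Farber, so there is no ``paper's own proof'' to compare against. Your construction---defining $\phi(x)$ as the class modulo $\ZZ$ of the path integral and invoking the Poincar\'e lemma locally to establish smoothness and $d\phi=\omega$---is exactly the classical proof one finds in those references.
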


This lemma has a useful consequence in proving existence of periodic orbits for area-preserving flows on compact surfaces. A similar observation was made in \cite{PerrellaThesis}.

\begin{lemma}\label{lem:periodic-orbits-for-abstract}
Let $\mu \in \Omega^2(S)$ be an area-form and $X$ a vector field on $S$ such that
\begin{equation*}
\cL_X \mu = 0.    
\end{equation*}
Let $\omega = i_X \mu$. Then, $d\omega = 0$. Suppose that there exists $a \in \RR \setminus \{0\}$ such that $[a\omega]$ has integral periods. Then, if $X \neq 0$, $X$ has a non-trivial periodic orbit.
\end{lemma}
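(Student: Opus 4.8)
The plan is to first check that $\omega$ is closed, then invoke Lemma~\ref{lem:circle-valued-characterisation} to write $a\omega=d\phi$ for a circle-valued map $\phi:S\to\Sone$, and finally to produce a periodic orbit of $X$ inside a suitable regular level set of $\phi$. The closedness of $\omega$ is immediate: since $\dim S=2$, the area-form $\mu$ has top degree, so $d\mu=0$, and by Cartan's formula $0=\cL_X\mu=d\,i_X\mu+i_X\,d\mu=d\omega$.

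Next, because $[a\omega]$ has integral periods, Lemma~\ref{lem:circle-valued-characterisation} yields a smooth $\phi:S\to\Sone$ with $a\omega=d\phi$. The crucial observation is twofold. On one hand, $X$ is tangent to the fibres of $\phi$, since $d\phi(X)=a\,(i_X\mu)(X)=a\,\mu(X,X)=0$. On the other hand, as $\mu$ is nondegenerate and $a\neq0$, at each point $p$ one has $d\phi|_p=a\,i_X\mu|_p\neq 0$ exactly when $X|_p\neq 0$; that is, the regular points of $\phi$ coincide with the points at which $X$ does not vanish.

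Finally I would extract the orbit. Since $X\not\equiv 0$, pick $p_0$ with $X|_{p_0}\neq 0$; then $\phi$ is a submersion near $p_0$, so $\phi(S)$ contains an open interval about $\phi(p_0)$. By Sard's theorem the critical values of $\phi$ form a null set, so this interval contains a regular value $c\in\phi(S)$. The level set $\phi^{-1}(c)$ is then a nonempty compact $1$-manifold, hence a finite disjoint union of embedded circles, and by the observation above $X$ is tangent to it and nowhere zero there. Choosing one such circle $\Gamma$, it is a closed invariant submanifold for the flow of $X$, and $X|_\Gamma$ is a nowhere-vanishing vector field on a circle; by connectedness it has constant sign, so the integral curve through any point of $\Gamma$ traverses $\Gamma$ in finite time, giving a non-trivial periodic orbit of $X$.

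The step needing the most care is securing a regular value that genuinely lies in the image of $\phi$: because $X$ may have zeros, $\phi$ need not be a submersion everywhere, so one cannot simply foliate $S$ by level circles. Combining the local submersion property at $p_0$ with Sard's theorem is exactly what guarantees a nonempty regular level set on which $X$ remains nonvanishing; once this is in hand, closing up the orbit on the circle $\Gamma$ is elementary.
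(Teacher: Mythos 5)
Your proposal is correct and follows essentially the same route as the paper: closedness of $\omega$ via Cartan's formula, Lemma \ref{lem:circle-valued-characterisation} to write $a\omega = d\phi$ for a circle-valued $\phi$, Sard's theorem combined with the openness of $\phi$ near a point where $X\neq 0$ to obtain a regular value in the image, and then extraction of a compact regular level circle on which $X$ is tangent and nonvanishing. The only cosmetic difference is that you make explicit the identification of regular points of $\phi$ with the non-vanishing locus of $X$, which the paper leaves implicit in the step ``$d\phi|_{\phi^{-1}(y)}$ has no zeros.''
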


\begin{proof}
As well-known,
\begin{equation*}
d\omega = di_X\mu = i_X d\mu + di_X\mu = \cL_X \mu = 0.
\end{equation*}
Now, suppose that there exists $a \neq 0$ such that $[a\omega]$ has integral periods. Then, by Lemma \ref{lem:circle-valued-characterisation}, $a\omega = d\phi$ for some smooth $\phi : S \to \Sone$. We may write that
\begin{equation}\label{eq:phi-and-X}
d\phi = ai_X\mu.    
\end{equation}

Suppose that $X \neq 0$. Then, $\omega = i_X \mu \neq 0$ and so $d\phi \neq 0$. Hence, the image $\Imag \phi$ has non-empty interior in $\Sone$. Therefore, by Sard's Theorem, there exists a regular value $y \in \Sone$ in the image of $\phi$. Hence, $\phi^{-1}(y)$ is an embedded submanifold of $S$ of dimension $1$. Moreover, $\phi^{-1}(y)$ is compact because it is a (topologically) closed subset of $S$. Let $\ell$ be a connected component of $\phi^{-1}(y)$. Then, $\ell$ is a embedded 1-manifold which is also compact and connected. Therefore, $\ell$ is an embedded circle in $S$. Because $d\phi|_{\phi^{-1}(y)}$ has no zeros, we have by Equation \eqref{eq:phi-and-X} that $X|_{\ell}$ has no zeros and is tangent to $\ell$. A standard connectedness argument then shows that $\ell$ is an orbit of $X$.
\end{proof}

On the other hand, by Poincar\'e duality, we have the symplectic form $W : H^1_{\text{dR}}(S) \times H^1_{\text{dR}}(S) \to \RR$ given by
\begin{equation*}
W([\omega],[\eta]) = \int_S \omega \wedge \eta,
\end{equation*}
where $\omega,\eta \in \Omega^1(S)$ are closed 1-forms.

For any orientation-reversing diffeomorphism $\psi : S \to S$, considering the linear map $L = \psi^* : H^1_{\text{dR}}(S) \to H^1_{\text{dR}}(S)$, one has that
\begin{equation*}
L^*W = -W.    
\end{equation*}
Indeed, for any closed 1-forms $\omega,\eta \in \Omega^1(S)$, because $\phi$ is orientation-reversing,
\begin{align*}
W(\psi^*[\omega],\psi^*[\eta]) &= W([\psi^*\omega],[\psi^*\eta])\\
&= \int_S \psi^*\omega \wedge \psi^*\eta\\
&= \int_S \psi^*(\omega \wedge \eta)\\
&= -\int_S \omega \wedge \eta\\
&= -W([\omega],[\eta]).
\end{align*}
We will henceforth assume in addition to $\psi$ being orientation-reversing that
\begin{equation*}
\psi^* \circ \psi^* = \id
\end{equation*}
on $H^1_{\text{dR}}(S)$. Then, the results found in Section \ref{sec:sym-lin-alg} apply. In particular, for $\sigma \in \{+1,-1\}$, setting
\begin{equation*}
E_\sigma = \{C \in H^1_{\text{dR}}(S) : \phi^*C = \sigma C\}, 
\end{equation*}
we have that
\begin{equation}\label{eq:lattice-splitting-for-1st-cohomology}
\begin{split}
E_{+1} \oplus E_{-1} &= H^1_{\text{dR}}(S),\\
\myspan{(E_{\sigma} \cap H^1_{\text{dR},\ZZ}(S))} &= E_{\sigma},\\
\dim E_\sigma &= \frac{b}{2}.
\end{split}
\end{equation}

We now state and prove an extension of Proposition \ref{prop:periodic-orbits}.

\begin{proposition}
Let $\mu \in \Omega^2(S)$ be an area-form and $X$ a vector field on $S$ such that
\begin{equation*}
\cL_X \mu = 0.    
\end{equation*}
Suppose that $(\psi_* X)|_x$ and $X|_x$ are linearly-dependent at each point $x \in S$. Then, if $X$ is not identically zero, $X$ has a non-trivial periodic orbit.
\end{proposition}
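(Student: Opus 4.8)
The plan is to reduce the statement to Lemma \ref{lem:periodic-orbits-for-abstract} by extracting enough information about the cohomology class $[\omega]$ of the flux form $\omega = i_X\mu$. First I would record that $\omega$ is closed (this is the opening computation of Lemma \ref{lem:periodic-orbits-for-abstract}) and then transfer the pointwise linear-dependence hypothesis from the vector fields to the $1$-forms. Concretely, at a point $x$ with $X|_x \neq 0$ one has $\ker\omega|_x = \RR X|_x$; since $(\psi_* X)|_{\psi(x)} = T_x\psi(X|_x)$ is proportional to $X|_{\psi(x)}$, pulling back gives $\ker(\psi^*\omega)|_x = (T_x\psi)^{-1}(\RR X|_{\psi(x)}) = \RR X|_x = \ker\omega|_x$, while at a zero of $X$ the form $\omega|_x$ vanishes outright. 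Hence $\omega$ and $\psi^*\omega$ are everywhere proportional, so that
\[
\omega \wedge \psi^*\omega = 0
\]
identically on $S$.

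Next I would pass to cohomology. Integrating this identity and using $[\psi^*\omega] = \psi^*[\omega]$ gives $W([\omega],\psi^*[\omega]) = 0$. Writing the decomposition $[\omega] = C_{+1} + C_{-1}$ from Section \ref{sec:sym-lin-alg} with $C_{\pm1} \in E_{\pm1}$, one has $\psi^*[\omega] = C_{+1} - C_{-1}$, and because $E_{+1}$ and $E_{-1}$ are Lagrangian for $W$ (the last lemma of Section \ref{sec:sym-lin-alg}) the expansion of $W([\omega],\psi^*[\omega])$ collapses to $W(C_{+1},C_{-1}) = 0$. The aim is then to conclude that $[\omega]$ lies on a rational ray, i.e.\ that $[a\omega]$ has integral periods for some $a \neq 0$; Lemma \ref{lem:periodic-orbits-for-abstract} would then finish the proof, as $X \not\equiv 0$.

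The main obstacle is precisely this last step. It is clean when $\dim H^1_{\text{dR}}(S) = 2$, i.e.\ when $S$ is a torus: there $E_{+1}$ and $E_{-1}$ are lines paired nondegenerately by $W$, so $W(C_{+1},C_{-1}) = 0$ forces $C_{+1} = 0$ or $C_{-1} = 0$; hence $[\omega]$ lies in a single $E_\sigma = \RR\cdot v$ with $v$ an integral class by Equation \eqref{eq:lattice-splitting-for-1st-cohomology}, so $[\omega]$ is a real multiple of $v$ and a suitable $a$ exists (and if $[\omega] = 0$ one takes $a = 1$, the zero class having integral periods). For higher genus a single bilinear relation no longer pins down the direction of $[\omega]$, so I would split on whether $[\omega]$ vanishes. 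If $[\omega] = 0$, then $\omega$ is exact and Lemma \ref{lem:periodic-orbits-for-abstract} applies directly with $a = 1$. If $[\omega] \neq 0$, then $X$ must carry a zero (by the Euler characteristic), and I would analyse the area-preserving germ at such a zero: an elliptic (center) zero immediately yields nearby closed orbits as level curves of a local Hamiltonian for $\omega$, whereas excluding the purely hyperbolic, minimal regime is where the relation $\omega\wedge\psi^*\omega=0$ must be leveraged most delicately. I expect this dynamical dichotomy, rather than the cohomological bookkeeping, to be the crux of the argument.
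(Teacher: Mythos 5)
Your treatment of the torus case is correct and is essentially the paper's own proof: both arguments derive $\omega \wedge \psi^*\omega = 0$ from the pointwise linear dependence, pass to $W([\omega],[\psi^*\omega]) = 0$, use the two-dimensionality of $H^1_{\text{dR}}(S)$ together with Equation \eqref{eq:lattice-splitting-for-1st-cohomology} to conclude that $[a\omega]$ has integral periods for some $a \neq 0$, and then invoke Lemma \ref{lem:periodic-orbits-for-abstract}. The only cosmetic difference is bookkeeping: the paper argues that in a $2$-dimensional symplectic space $W([\psi^*\omega],[\omega])=0$ forces $[\psi^*\omega]=r[\omega]$, and then uses $\psi^*\circ\psi^*=\id$ to get $r^2=1$, whereas you decompose $[\omega]=C_{+1}+C_{-1}$ and use the Lagrangian pairing; these are equivalent.

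The ``main obstacle'' you flag --- higher genus --- is not actually in scope, so your incomplete dynamical sketch (elliptic versus hyperbolic zeros, excluding minimal-type behaviour) is not a gap relative to the paper. This proposition is the generalisation announced in the introduction ``in the setting of abstract 2-tori'', and the paper's proof opens by asserting $b = \dim H^1_{\text{dR}}(S) = 2$; no claim is made, and no argument is given, for surfaces of genus $g \geq 2$. Your instinct that the cohomological bookkeeping suffices exactly when $b=2$, and that something genuinely harder would be needed beyond that, is consistent with what the paper does: it simply does not go there.
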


\begin{proof}
In the current situation, we have that $b = \dim H^1_{\text{dR}}(S) = 2$. Hence, from Equation \eqref{eq:lattice-splitting-for-1st-cohomology}, we there exist closed 1-forms $\omega_1,\omega_2 \in \Omega^1(S)$ such that
\begin{enumerate}
    \item $[\omega_1]$ and $[\omega_2]$ have integral periods,
    \item $[\omega_1]$ and $[\omega_2]$ form a basis for $H^1_{\text{dR}}(S)$,
    \item $E_{+1} = \myspan\{[\omega_1]\}$, and
    \item $E_{-1} = \myspan\{[\omega_2]\}$.
\end{enumerate}
Hence, if $\omega$ is a closed 1-form which satisfies
\begin{equation*}
[\psi^*\omega] = \sigma[\omega],    
\end{equation*}
for some $\sigma \in \{+1,-1\}$, then $[a \omega]$ has integral periods for some $a \neq 0$.

Concerning the vector field $X$, we will now show that $\omega = i_X \mu$ is such that $[a\omega]$ has integral periods for some $a \in \RR \setminus \{0\}$. Indeed, first observe that because $\mu$ is an area-form, there exists $f \in C^{\infty}(S)$ such that
\begin{equation*}
\psi^*\mu = f\mu.  
\end{equation*}
With this, we compute
\begin{align*}
\psi^*\omega &= \psi^*(i_X \mu)\\
&= i_{(\psi^{-1})_* X}f\mu\\
&= f i_{(\psi^{-1})_* X}\mu.
\end{align*}
Now, because $\psi_* X$ and $X$ are linearly-dependent at each point of $S$, we have that $(\psi^{-1})_* X$ and $X$ are linearly-dependent at each point of $S$ and so $\omega = i_X\mu$ satisfies
\begin{equation}\label{eq:lin-dependence-of-closed-forms}
(\psi^*\omega) \wedge \omega = 0.    
\end{equation}
With respect to the symplectic form $W : H^1_{\text{dR}}(S) \times H^1_{\text{dR}}(S) \to \RR$, Equation \eqref{eq:lin-dependence-of-closed-forms} implies that
\begin{equation*}
W([\psi^*\omega],[\omega]) = 0.    
\end{equation*}
Because $H^1_{\text{dR}}(S)$ is 2-dimensional, this means that $[\psi^*\omega]$ and $[\omega]$ are linearly dependent cohomology classes. If $[\omega] = 0$, then $[\omega]$ trivially has integral periods. If $[\omega] \neq 0$, then linear dependence implies that
\begin{equation}\label{eq:lin-dependence-of-cohomologies}
[\psi^*\omega] = r [\omega],   
\end{equation}
for some $r \in \RR$. Because we are assuming that $\psi^* \circ \psi^* : H^1_{\text{dR}}(S) \to H^1_{\text{dR}}(S)$ is identity, applying $\psi^*$ to Equation \eqref{eq:lin-dependence-of-cohomologies} yields that
\begin{equation}
[\omega] = [\psi^*\psi^*\omega] = r^2[\omega].   
\end{equation}
Since we are assuming $[\omega] \neq 0$, this implies that $r^2 = 1$ and so $r \in \{+1,-1\}$. Hence, applying the discussion in the first paragraph, Equation \eqref{eq:lin-dependence-of-cohomologies} implies that $[a\omega]$ has integral periods for some $a \in \RR \setminus \{0\}$. 

In summary (in either case), $[a\omega]$ has integral periods for some $a \in \RR \setminus \{0\}$ and so by Lemma \ref{lem:periodic-orbits-for-abstract}, if $X$ is not identically zero, $X$ has a non-trivial periodic orbit.
\end{proof}

\appendix

\section{Proof of Lemma \ref{lem:homolgy-group-invariants} from Section \ref{sec:poloidal-and-toroidal-homologies}}\label{app:deferred-proofs-1}

For convenience, we will recall the statement of the lemma.
\homgroupinvar*
\begin{proof}
Because $\cI(S) = S'$, $\cI$ restricts to a diffeomorphism $\RR^n \setminus S \to \RR^n \setminus S'$ and thus preserves the set of connected components between $\RR^n \setminus S$ and $\RR^n \setminus S'$. That is,
\begin{equation*}
\{\cI(\cB_S),\cI(\cE_S)\} = \{\cB_{S'},\cE_{S'}\}.     
\end{equation*}
Therefore,
\begin{equation*}
\{\cI(\overline{\cB_S}),\cI(\overline{\cE_S})\} = \{\overline{\cB_{S'}},\overline{\cE_{S'}}\}.     
\end{equation*}
Now, $\overline{\cB_s}$ is the only compact element of $\{\overline{\cB_S},\overline{\cE_S}\}$. Therefore, Equation \eqref{eq:diffeo-preserves-int-and-ext} holds.

Before getting to orientation, we must make some preliminary observations. If $R \subset \RR^n$ is a compact regular domain in $\RR^n$, then we may give $R$ the induced orientation from $\RR^n$. The boundary $\partial R$ may then be given the induced orientation from $R$. Let $\iota_R : \partial R \subset \RR^n$ denote the inclusion. Then, for any $x \in \partial R$ and curve $\gamma : (-1,1) \to \RR^n$ such that, 
\begin{equation}\label{eq:admissible-curves}
\gamma((-1,0]) \subset R, \qquad \gamma(0) = x, \qquad \gamma'(0) \notin T_x\iota_R (T_x \partial R),
\end{equation}
a basis $v_1,...,v_{n-1}$ of $T_x \partial R$ has the induced orientation if and only if the basis
\begin{equation*}
T_x\iota_R (v_1),...,T_x\iota_R (v_{n-1}),\gamma'(0) 
\end{equation*}
of $T_x \RR^n$ has the standard orientation.

Because $\overline{\cB_S},\overline{\cB_{S'}} \subset \RR^n$ are compact regular domains, we may give $S = \partial \overline{\cB_S}$ and $S' = \partial \overline{\cB_{S'}}$ the induced orientations described in the previous paragraph. From Equation \eqref{eq:diffeo-preserves-int-and-ext}, $x \in S$ and $\gamma : (-1,1) \to S$ together satisfy Equation \eqref{eq:admissible-curves} with $R = \overline{\cB_S}$ if and only if $\tilde{x} = \cI(x) = \psi(x) \in S'$ and $\tilde{\gamma} = \cI \circ \gamma : (-1,1) \to \RR^n$ satisfy Equation \eqref{eq:admissible-curves} with $R = \overline{\cB_{S'}}$. From this, it is clear that $\cI$ is orientation-preserving if and only if $\psi$ is orientation-preserving.

To establish Equation \eqref{eq:diffeo-preserves-int-and-ext-homology}, we first note that, from Equation \eqref{eq:diffeo-preserves-int-and-ext}, we obtain the the diffeomorphisms
\begin{align*}
\cI_\cB &: \overline{\cB_S} \to \overline{\cB_{S'}},\\
\cI_\cE &: \overline{\cE_S} \to \overline{\cE_{S'}},
\end{align*}
induced from $\cI$. Consider the inclusions
\begin{align*}
\jmath_{\cB_S} &: S \subset \overline{\cB_S}, & \jmath_{\cE_S} &: S \subset \overline{\cE_S}\\
\jmath_{\cB_{S'}} &: S' \subset \overline{\cB_{S'}}, & \jmath_{\cE_{S'}} &: S' \subset \overline{\cE_{S'}}
\end{align*}
Then, we have the commutativity,
\begin{align*}
\cI_{\cB} \circ \jmath_{\cB_S} &= \jmath_{\cB_{S'}} \circ \psi,\\
\cI_{\cE} \circ \jmath_{\cE_S} &= \jmath_{\cE_{S'}} \circ \psi.
\end{align*}
Letting $k \in \NN_0$, we have on $H_k(S)$ that
\begin{align*}
{\cI_\cB}_* \circ {\jmath_{\cB_S}}_* &= {\jmath_{\cB_{S'}}}_* \circ \psi_*,\\
{\cI_\cE}_* \circ {\jmath_{\cE_S}}_* &= {\jmath_{\cE_{S'}}}_* \circ \psi_*
\end{align*}
In particular, because $\cI_\cB$, $\cI_\cE$ and $\psi$ are all diffeomorphisms, for all $\gamma \in H_k(S)$, one has that
\begin{align*}
{\jmath_{\cB_S}}_*(\gamma) = 0 &\iff {\jmath_{\cB_{S'}}}_*(\psi_*(\gamma)) = 0,\\
{\jmath_{\cE_S}}_*(\gamma) = 0 &\iff {\jmath_{\cE_{S'}}}_*(\psi_*(\gamma)) = 0.
\end{align*}
From this, Equation \eqref{eq:diffeo-preserves-int-and-ext-homology} follows.
\end{proof}

\section{Proof of Lemma \ref{lem:diffeomorphism-action-compatibility} from Section \ref{sec:extension-to-nested-toroidal-surfaces}}\label{app:deferred-proofs-2}

For this section, the definitions of poloidal and toroidal subgroups and indices are needed. They are found in the introduction in Proposition \ref{prop:homology-action-characterisation} and Definition \ref{def:pol-tor-index}. We now recall the statement of Lemma \ref{lem:diffeomorphism-action-compatibility}.
\diffactcom*
To prove the lemma, we first establish some sublemmas.

\begin{lemma}\label{lem:isotopy}
Let $n \in \mathbb{N}$ and $S$ be a closed and connected manifold of dimension $n-1$. Let $I$ be an open interval containing $0$, and $\Sigma : S \times I \to \RR^n$ be an embeddeding. Then, for each $i \in I$, there exists a smooth isotopy $H : [0,1] \times \RR^n \to \RR^n$ of $\RR^n$ such that, for all $x \in S$ and $s \in [0,1]$,
\begin{align*}
H(0,\cdot) &= \id,\\
H(1,\Sigma(x,0)) &= \Sigma(x,i),\\
H(\{s\} \times U) &= U,
\end{align*}
where $U = \Sigma(S\times I)$.
\end{lemma}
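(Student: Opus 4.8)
Given an embedding $\Sigma : S \times I \to \RR^n$ of a closed connected $(n-1)$-manifold $S$ and for each $i \in I$, I want to produce a smooth ambient isotopy $H : [0,1] \times \RR^n \to \RR^n$ starting at the identity, carrying $\Sigma(x,0)$ to $\Sigma(x,i)$ for every $x \in S$, and preserving the open tube $U = \Sigma(S \times I)$ at every time.

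The plan is to build the isotopy \emph{inside} the tube first, using the product coordinates supplied by $\Sigma$, and then extend it to all of $\RR^n$ by the standard technique of flowing along a compactly supported time-dependent vector field. First I would work in the model $S \times I$, where the desired motion is transparent: I want to slide the slice $S \times \{0\}$ to the slice $S \times \{i\}$ while keeping the tube invariant. Choose a smooth path $t \mapsto i_t$ in $I$ with $i_0 = 0$ and $i_1 = i$, and choose a smooth compactly supported function $\beta : I \to \RR$ with $\beta \equiv 1$ on a neighbourhood of the segment between $0$ and $i$ and $\supp \beta \subset I$. Define a time-dependent vector field on $S \times I$ by $V_t(x,s) = \dot{i}_t \,\beta(s)\,\partial_s$ (tangent to the $I$-factor, independent of the $S$ variable). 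Its flow $\phi_t$ fixes the boundary region of the tube where $\beta = 0$, preserves $S \times I$, and on the relevant central region moves each slice $S \times \{s\}$ rigidly in the $s$-direction, so that $\phi_1(x,0) = (x, i)$ for all $x \in S$. Transporting $V_t$ through the embedding $\Sigma$ gives a time-dependent vector field $W_t = \Sigma_* V_t$ defined on $U \subset \RR^n$, which is compactly supported in the $I$-direction but whose support need not be compact in $\RR^n$ if $S$ is noncompact; however $S$ is \emph{closed} (hence compact), so $\supp W_t$ is in fact a compact subset of the open set $U$.

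Next I would extend $W_t$ to a globally defined, compactly supported, smooth time-dependent vector field $\widetilde{W}_t$ on all of $\RR^n$: since $\supp W_t$ is a compact subset of the open set $U$, multiplying by a bump function $\chi \in C^\infty_c(U)$ that equals $1$ on a neighbourhood of $\bigcup_{t}\supp W_t$ and extending by zero yields such a $\widetilde{W}_t$ with $\widetilde{W}_t = W_t$ where it matters and $\widetilde{W}_t$ supported inside $U$. Because $\widetilde{W}_t$ is compactly supported, its flow is complete and defines a smooth isotopy $H(t,\cdot) = \widetilde\phi_t$ of $\RR^n$ with $H(0,\cdot) = \id$. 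By construction $\widetilde{W}_t$ is tangent to $U$ (indeed supported in $U$), so $H(\{s\} \times U) = U$ for all $s$; and since $\widetilde{W}_t$ agrees with $W_t$ on the image of the central slices, $H(1,\Sigma(x,0)) = \Sigma(\phi_1(x,0)) = \Sigma(x,i)$ for every $x \in S$. This gives all three required properties.

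The main obstacle I anticipate is the bookkeeping of \emph{supports}: one must ensure simultaneously that the motion is nontrivial along the whole segment from $0$ to $i$ (so the slice actually reaches $S \times \{i\}$), that the vector field is cut off well inside $I$ so the flow never leaves $U$, and that the global cutoff $\chi$ does not disturb the field on the trajectories that carry $\Sigma(x,0)$ to $\Sigma(x,i)$. The compactness of $S$ is exactly what makes this reconcilable, since it forces $\Sigma(S \times K)$ to be compact for any compact $K \subset I$, so a single bump function suffices. A minor point to verify is completeness/well-definedness of the flow for all $t \in [0,1]$, which again follows from compact support. No hard analysis is needed; the content is entirely the standard isotopy-extension argument adapted to keep the tube invariant.
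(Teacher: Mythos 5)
Your proposal is correct and takes essentially the same approach as the paper: the paper likewise cuts off the translation field $\partial_t$ on $S \times I$ by a bump function supported away from the ends of $I$, pushes it forward through $\Sigma$, extends by zero to a compactly supported vector field $V$ on $\RR^n$ (using compactness of $S$), and defines the isotopy by its flow, $H(s,y) = \psi^V(is,y)$. The only cosmetic difference is that the paper uses a time-independent field with rescaled flow time rather than your time-dependent field $\dot{i}_t\,\beta(s)\,\partial_s$.
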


\begin{proof}
Fix $i \in I$ and write $I = (a,b)$ with $a < 0 < b$. Let $d = \min\{-a, i-a,b-i,b\} > 0$. We will construct the isotopy from a vector field. To this end, let $\rho : I \to \RR$ be a smooth function such that
\begin{align*}
\rho|_{(a+d/2,b-d/2)} &= 1,\\
\rho|_{(a,a+d/3)} &= 0,\\
\rho|_{(b-d/3,b)} &= 0.
\end{align*}
Define the function $P : S \times I \to \RR$ given by
\begin{equation*}
P(x,t) = \rho(x,t).    
\end{equation*}
Let $\partial_t$ denote the vector field on $S \times I$ inherited from the interval $I \subset \RR$. Then, consider the vector field
\begin{equation*}
X = P \partial_t    
\end{equation*}
on $S \times I$. Observe that $X$ is a compactly supported vector field on $S \times I$ and therefore has a global flow $\psi^X$ on $S \times I$. Because $0,i \in (a+d/2,b-d/2)$, for $x \in S$ we find that
\begin{equation*}
\psi^X_{i}(x,0) = (x,i).     
\end{equation*}
Because $X$ is compactly supported, we obtain a vector field $V$ in $\RR^n$ which is compactly supported in $U = \Sigma(S \times I)$ which satisfies
\begin{equation*}
V \circ \Sigma = T\Sigma \circ X.    
\end{equation*}
In particular, $V$ has a global flow $\psi^V : \RR \times \RR^n \to \RR^n$. An isotopy $H : [0,1] \times \RR^n \to \RR^n$ with the desired properties is therefore given by
\begin{equation*}
H(s,y) = \psi^V(is, y).
\end{equation*}
\end{proof}

As a general notation used in this section, if $S$ is a manifold, $J$ is an interval, $\Sigma : S \times J \to \RR^n$ is an embedding, and $\gamma : [0,1] \to S$ is a smooth curve, then for $j \in J$, we let $\gamma^j : [0,1] \to \Sigma(S \times \{j\})$ be the smooth curve given by
\begin{equation}
\gamma^j(t) = \Sigma(\gamma(t),j).    
\end{equation}
Although more general statements can be derived, we now apply Lemma \ref{lem:isotopy} to the 2-torus.

\begin{lemma}\label{lem:poloidal-toroidal-generators-open-int}
Let $S$ be a 2-torus. Let $I$ be an open interval containing $0$ and $\Sigma : S \times I \to \RR^3$ be an embedding. Let $S_i = \Sigma(\{i\} \times S)$ for $i \in I$. Denote the poloidal and toroidal subgroups of $S_i$ as $\cP_i, \cT_i \subset H_1(S_i)$ for $i \in I$. Let $\gamma_P,\gamma_T : [0,1] \to S$ be smooth closed curves such that the homology classes $[\gamma_P^0],[\gamma_T^0]$ respectively generate the poloidal and toroidal subgroups $\cP_0$ and $\cT_0$ of $S_0$. Then, for $i \in I$, $[\gamma_P^i]$ and $[\gamma_T^i]$ respectively generate the poloidal and toroidal subgroups $\cP_i$ and $\cT_i$ of $S_i$.
\end{lemma}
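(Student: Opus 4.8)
The plan is to fix an arbitrary $i \in I$ and manufacture a single diffeomorphism of $\RR^3$ that carries $S_0$ to $S_i$ compatibly with the parametrisation $\Sigma$, and then feed this diffeomorphism into Lemma \ref{lem:homolgy-group-invariants} to transport the poloidal and toroidal subgroups. Concretely, I would invoke Lemma \ref{lem:isotopy} with $n = 3$ to obtain an isotopy $H : [0,1] \times \RR^3 \to \RR^3$ with $H(0,\cdot) = \id$ and $H(1,\Sigma(x,0)) = \Sigma(x,i)$ for all $x \in S$. Setting $\cI = H(1,\cdot)$, this is a diffeomorphism of $\RR^3$, and its defining property gives immediately that $\cI(S_0) = S_i$, with the induced boundary diffeomorphism $\psi = \cI|_{S_0} : S_0 \to S_i$ satisfying $\psi(\Sigma(x,0)) = \Sigma(x,i)$.

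The crucial observation is then that $\psi$ sends the curve $\gamma_P^0$ exactly to $\gamma_P^i$: for each $t \in [0,1]$,
\begin{equation*}
(\psi \circ \gamma_P^0)(t) = \psi(\Sigma(\gamma_P(t),0)) = \Sigma(\gamma_P(t),i) = \gamma_P^i(t),
\end{equation*}
and likewise $\psi \circ \gamma_T^0 = \gamma_T^i$. Consequently, on first homology, $\psi_*([\gamma_P^0]) = [\gamma_P^i]$ and $\psi_*([\gamma_T^0]) = [\gamma_T^i]$.

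Next I would apply Lemma \ref{lem:homolgy-group-invariants} to the diffeomorphism $\cI$ with $S = S_0$ and $S' = S_i$. Since the poloidal and toroidal subgroups are precisely the kernels of the inclusion-induced maps into the bounded and unbounded closures, that lemma yields $\psi_*(\cP_0) = \cP_i$ and $\psi_*(\cT_0) = \cT_i$. Because $\psi$ is a diffeomorphism, $\psi_* : H_1(S_0) \to H_1(S_i)$ is an isomorphism, and therefore restricts to isomorphisms $\cP_0 \to \cP_i$ and $\cT_0 \to \cT_i$. As $[\gamma_P^0]$ generates $\cP_0$, its image $\psi_*([\gamma_P^0]) = [\gamma_P^i]$ generates $\cP_i$; similarly $[\gamma_T^i]$ generates $\cT_i$. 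Since $i \in I$ was arbitrary, the conclusion follows.

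I do not anticipate a genuine obstacle, as the substantive content is packaged in Lemma \ref{lem:isotopy} (which produces the ambient diffeomorphism) and Lemma \ref{lem:homolgy-group-invariants} (which transports the splitting). The only point requiring care is the bookkeeping that the induced restriction $\psi$ \emph{literally} sends the parametrised curve $\gamma_P^0$ to $\gamma_P^i$ (and $\gamma_T^0$ to $\gamma_T^i$); once this identification is in hand, no separate argument pinning down $\psi_*([\gamma_P^0])$ as a generator is needed beyond functoriality of $\psi_*$ together with the fact that an isomorphism $\ZZ \to \ZZ$ carries a generator to a generator.
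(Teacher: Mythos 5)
Your proposal is correct and is essentially identical to the paper's own proof: both obtain the ambient diffeomorphism $H(1,\cdot)$ from Lemma \ref{lem:isotopy}, note that its restriction $\psi$ carries $\gamma_P^0$ to $\gamma_P^i$ and $\gamma_T^0$ to $\gamma_T^i$ on the nose, and then use Lemma \ref{lem:homolgy-group-invariants} to see that $\psi_*$ maps $\cP_0$ onto $\cP_i$ and $\cT_0$ onto $\cT_i$, so generators go to generators. No gaps.
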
 

\begin{proof}
Let $i \in I$. Then, consider an isotopy $H : [0,1] \times \RR^n \to \RR^n$ as in Lemma \ref{lem:isotopy}. Then, consider the diffeomorphism $H_1 = H(1,\cdot) : \RR^n \to \RR^n$. We have that $H_1(S_0) = S_i$. Letting $\psi : S_0 \to S_i$ denote the diffeomorphism induced from $H_1$, we have immediately from Equation \eqref{eq:diffeo-preserves-int-and-ext-homology} in Lemma \ref{lem:homolgy-group-invariants} that
\begin{equation*}
\psi_*(\cP_0) = \cP_i, \qquad \psi_*(\cE_0) = \cE_i.
\end{equation*}
On the other hand, observe that for any smooth curve $\gamma : [0,1] \to S$, one has that
\begin{equation*}
\gamma^i = \psi \circ \gamma^0.
\end{equation*}
Hence, because the homology classes $[\gamma_P^0],[\gamma_T^0]$ respectively generate the poloidal and toroidal subgroups $\cP_0$ and $\cT_0$ of $S_0$, the homology classes
\begin{equation}
[\gamma^i_P] = \psi_*[\gamma^0_P] \in \cP_i, \qquad [\gamma^i_T] = \psi_*[\gamma^0_T] \in \cT_i,
\end{equation}
must respectively generate $\cP_i$ and $\cT_i$.
\end{proof}

To prove Lemma \ref{lem:diffeomorphism-action-compatibility} we will also make use of a standard elementary result used in differential geometry.

\begin{lemma}\label{lem:injective-slice}
Let $S$ be a compact second countable Hausdorff topological space. Let $I \subset \RR$ be an open interval containing zero and let $M$ a Hausdorff topological space. Let $\Sigma : S \times I \to M$ be a locally-injective continuous map such that, the restricted map $\Sigma|_{S \times \{0\}}$ is injective. Then, there exists $\delta > 0$ with $(-\delta,\delta) \subset I$ such that $\Sigma|_{S \times (-\delta,\delta)}$ is injective.
\end{lemma}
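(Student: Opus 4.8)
The plan is to argue by contradiction using a sequential-compactness argument that combines the injectivity of $\Sigma$ on the central slice $S \times \{0\}$ with the hypothesis of local injectivity near that slice. Before running the argument I would first record why subsequence extraction is legitimate: since $S$ is compact, second countable, and Hausdorff, it is normal (compact Hausdorff) hence regular, and being additionally second countable it is metrizable by the Urysohn metrization theorem; a compact metrizable space is sequentially compact. This is the only place where all three topological hypotheses on $S$ are jointly used, and it is what licenses the convergent subsequences below.

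Suppose the conclusion fails. Then for every $n \in \NN$ large enough that $(-1/n,1/n) \subset I$, the restriction $\Sigma|_{S \times (-1/n,1/n)}$ is not injective, so there exist distinct points $(x_n,s_n) \neq (y_n,t_n)$ in $S \times (-1/n,1/n)$ with $\Sigma(x_n,s_n) = \Sigma(y_n,t_n)$. By sequential compactness of $S$, after passing to a subsequence I may assume $x_n \to x$ and $y_n \to y$ for some $x,y \in S$, while $s_n,t_n \to 0$ because $|s_n|,|t_n| < 1/n$. Continuity of $\Sigma$ then gives $\Sigma(x,0) = \lim_n \Sigma(x_n,s_n) = \lim_n \Sigma(y_n,t_n) = \Sigma(y,0)$, and since $\Sigma|_{S \times \{0\}}$ is injective this forces $x = y$. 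Consequently both sequences $(x_n,s_n)$ and $(y_n,t_n)$ converge to the single point $(x,0) \in S \times I$.

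Finally I would invoke local injectivity at $(x,0)$: choose an open neighbourhood $V$ of $(x,0)$ in $S \times I$ on which $\Sigma$ is injective. Because both $(x_n,s_n) \to (x,0)$ and $(y_n,t_n) \to (x,0)$, for all sufficiently large $n$ both points lie in $V$; yet they are distinct and satisfy $\Sigma(x_n,s_n) = \Sigma(y_n,t_n)$, contradicting injectivity of $\Sigma|_V$. This contradiction produces the desired $\delta > 0$. I expect the only genuine obstacle to be the justification of passing to convergent subsequences — that is, establishing metrizability and hence sequential compactness of $S$ from the stated hypotheses; once that is in hand, the remainder is a routine limiting argument, and no smoothness or manifold structure is needed.
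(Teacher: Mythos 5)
Your proof is correct and follows essentially the same route as the paper's: a contradiction argument extracting sequences of distinct points with equal images at heights tending to zero, using compactness to pass to convergent subsequences, continuity plus injectivity on the central slice to force a common limit $(x,0)$, and local injectivity there for the contradiction. The only cosmetic difference is that you justify sequential compactness explicitly via Urysohn metrization, whereas the paper extracts the convergent subsequence in the product $(S\times[0,1])^2$ with the same (implicit) justification.
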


\begin{proof}
Suppose not. Then, there exists sequences $\{(x_n,s_n)\}_{n \in \mathbb{N}}$ and $\{(y_n,t_n)\}_{n \in \mathbb{N}}$ in $S \times I$ such that, for all $n \in \mathbb{N}$,
\begin{align*}
\Sigma(x_n,s_n) &= \Sigma(y_n,t_n), & (x_n,s_n) &\neq (y_n,t_n), & |s_n| &< 1/n, & |t_n| &< 1/n.
\end{align*}
Then, because the topological space $(S \times [0,1])^2$ is compact, second countable, and Hausdorff, there exists a convergent subsequence $\{(x_{n_m},s_{n_m},y_{n_m},t_{n_m})\}_{m \in \mathbb{N}}$ of $\{(x_{n},s_{n},y_{n},t_{n})\}_{n \in \mathbb{N}}$ in $(S \times [0,1])^2$. Because $s_n \to 0$ and $t_n \to 0$ in $I$, it follows that
\begin{equation*}
(x_{n_m},s_{n_m},y_{n_m},t_{n_m}) \to (x,0,y,0),
\end{equation*}
in $(S \times I)^2$, for some $x,y \in S$. By continuity of $\Sigma$, we have in $M$ that
\begin{align*}
\Sigma(x_{n_m},s_{n_m}) &\to \Sigma(x,0),\\
\Sigma(y_{n_m},t_{n_m}) &\to \Sigma(y,0).
\end{align*}
On the other hand, because $\Sigma(x_n,s_n) = \Sigma(y_n,t_n)$ for $n \in \mathbb{N}$, we have (by uniqueness of limits in Hausdorff spaces) that $\Sigma(x,0) = \Sigma(y,0)$, and therefore, because $\Sigma|_{S \times \{0\}}$ is injective, $x = y$. Hence, in $S \times I$, we have
\begin{equation}\label{eq:conv}
\begin{split}
(x_{n_m},s_{n_m}) &\to (x,0),\\
(y_{n_m},t_{n_m}) &\to (x,0).
\end{split}
\end{equation}
Because $\Sigma$ is a local injection, there exists a neighborhood $U$ of $(x,0)$ in $S \times I$ such that $\Sigma|_{U}$ is injective. However, from Equation \eqref{eq:conv}, because $U$ is a neighborhood of $(x,0)$, there exists $m_0 \in \mathbb{N}$ such that, for all $m > m_0$, we have
\begin{equation*}
(x_{n_m},s_{n_m}), (y_{n_m},t_{n_m}) \in U.
\end{equation*}
However, for all $n \in \mathbb{N}$, we have $\Sigma(x_n,s_n) = \Sigma(y_n,t_n)$. So, by injectivity of $\Sigma|_U$, for $m > m_0$, we have that $(x_{n_m},s_{n_m}) = (y_{n_m},t_{n_m})$, which contradicts the fact that $(x_n,s_n) \neq (y_n,t_n)$ for all $n \in \mathbb{N}$. Hence, by contradiction, we are done.
\end{proof}

The previous lemma enables the following extension lemma for embeddings.

\begin{lemma}\label{lem:extension-lemma-for-embeddings}
Let $n \in \mathbb{N}$ and $S$ be a closed and connected manifold of dimension $n-1$. Let $\epsilon > 0$ and $J = [0,\epsilon)$. Let $\Sigma : S \times J \to \RR^n$ be an embedding. Then, there exists $\delta > 0$ and an embedding $\Sigma_2 : S \times (-\delta,\epsilon) \to \RR^n$ such that $\Sigma_2|_{S \times J} = \Sigma$.
\end{lemma}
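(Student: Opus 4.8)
The plan is to extend $\Sigma$ smoothly a little past the boundary $S \times \{0\}$, arrange that the extension is an equidimensional immersion on a two-sided collar, and then shrink the negative side until injectivity is restored. The two ingredients already available — Lemma \ref{lem:injective-slice} for local-to-global injectivity near $t=0$, and the fact that $\Sigma$, being an embedding, is a homeomorphism onto its image — are exactly what make this work, the latter being what controls the noncompact open end at $\epsilon$.

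First I would produce a smooth extension. Since $\Sigma$ is smooth on the manifold-with-boundary $S \times [0,\epsilon)$, around each point of the compact boundary $S \times \{0\}$ it extends to a smooth map on an open neighbourhood in $S \times \RR$; patching finitely many such local extensions by a partition of unity and using the tube lemma to obtain a uniform width yields some $a>0$ and a smooth $\tilde\Sigma : S \times (-a,\epsilon) \to \RR^n$ with $\tilde\Sigma|_{S \times [0,\epsilon)} = \Sigma$. Because $\Sigma$ is an embedding, $d\Sigma$ is injective on $S \times [0,\epsilon)$, in particular on $S \times \{0\}$, where $d\tilde\Sigma = d\Sigma$. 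The locus where $d\tilde\Sigma$ is injective is open and contains the compact set $S \times \{0\}$, so by the tube lemma it contains $S \times (-a',a')$ for some $a'>0$; after shrinking $a$ I may assume $d\tilde\Sigma$ is injective on all of $S \times (-a,\epsilon)$. As $\dim\big(S \times (-a,\epsilon)\big) = n$, the map $\tilde\Sigma$ is then a local diffeomorphism, hence an open map and in particular locally injective. Applying Lemma \ref{lem:injective-slice} with $I=(-a,\epsilon)$ and $M=\RR^n$ (noting $\tilde\Sigma|_{S\times\{0\}}=\Sigma|_{S\times\{0\}}$ is injective) gives $\delta_0>0$ with $\tilde\Sigma$ injective on $S \times (-\delta_0,\delta_0)$.

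The main step, and the step I expect to be the obstacle, is to upgrade this to injectivity on the one-sided slab $S \times (-\delta,\epsilon)$, whose positive end is the noncompact open end at $\epsilon$. I would argue by contradiction in the spirit of Lemma \ref{lem:injective-slice}: if injectivity failed for every $\delta = 1/m$, choose distinct $(x_m,s_m)\neq (y_m,t_m)$ in $S \times (-1/m,\epsilon)$ with $\tilde\Sigma(x_m,s_m)=\tilde\Sigma(y_m,t_m)$. Injectivity of $\Sigma$ on $S \times [0,\epsilon)$ and of $\tilde\Sigma$ on $S \times (-\delta_0,\delta_0)$ force, for large $m$ and after relabelling, $s_m \in (-1/m,0)$ and $t_m \ge \delta_0$. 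Passing to a subsequence, compactness of $S$ gives $(x_m,s_m)\to (x_*,0)$ for some $x_*\in S$, so that $\tilde\Sigma(y_m,t_m)=\tilde\Sigma(x_m,s_m)\to \Sigma(x_*,0)$. Since $t_m>0$ we have $\tilde\Sigma(y_m,t_m)=\Sigma(y_m,t_m)$, and because $\Sigma$ is an embedding its inverse is continuous on the image; hence $(y_m,t_m)=\Sigma^{-1}\big(\Sigma(y_m,t_m)\big)\to (x_*,0)$, contradicting $t_m\ge \delta_0>0$. It is precisely the embedding hypothesis (continuity of $\Sigma^{-1}$), and not merely injective immersion, that drives this contradiction, which is why this is the crux of the argument.

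Finally I would set $\Sigma_2 = \tilde\Sigma|_{S \times (-\delta,\epsilon)}$ for such a $\delta \in (0,\delta_0)$. By the previous step $\Sigma_2$ is injective, and by construction it is a local diffeomorphism, hence a continuous, open, injective map; therefore it is a homeomorphism onto its (open) image and so an embedding. Since $\Sigma_2|_{S \times [0,\epsilon)} = \Sigma$, this is the desired extension, completing the proof.
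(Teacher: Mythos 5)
Your proof is correct, and its first two steps (smooth extension past the boundary, upgrading to a local diffeomorphism near $S\times\{0\}$, then invoking Lemma \ref{lem:injective-slice} to get injectivity on a two-sided slab $S\times(-\delta_0,\delta_0)$) coincide with the paper's. Where you genuinely diverge is the crux: proving injectivity on the one-sided slab $S\times(-\delta,\epsilon)$. The paper does this with the Jordan--Brouwer Separation Theorem (Theorem \ref{thm:detailed-separation}): it shows that $\tilde{\Sigma}(S\times(-\delta,0))$ and $\tilde{\Sigma}(S\times(0,\delta))$ must lie in \emph{different} connected components of $\RR^n\setminus S_0$, where $S_0=\Sigma(S\times\{0\})$, and then that connectedness of $\Sigma(S\times(0,\epsilon))$ forces the whole positive-side image into one component, giving disjointness of the two injective pieces --- this keeps the same $\delta$ produced by Lemma \ref{lem:injective-slice} and yields the stronger geometric fact that the inward and outward collars are separated by $S_0$. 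You instead argue by contradiction with a sequence of colliding pairs, forcing (after relabelling) $s_m\in(-1/m,0)$ and $t_m\ge\delta_0$, and then use the defining property of an embedding --- continuity of $\Sigma^{-1}$ on the image --- to conclude $(y_m,t_m)\to(x_*,0)$, contradicting $t_m\ge\delta_0$. This correctly isolates that it is the embedding hypothesis, not mere injective immersion, doing the work (an injectively immersed but non-embedded hypersurface could indeed accumulate on itself and defeat the lemma). The trade-off: your argument avoids Jordan--Brouwer entirely, so it works verbatim for embeddings into any Hausdorff $n$-manifold rather than just $\RR^n$, and is more elementary in its prerequisites; the paper's argument is non-sequential, stays within the toolkit it has already built (Theorem \ref{thm:detailed-separation} is used throughout), and produces the component-separation statement as a by-product, which is in the spirit of how the lemma is later applied. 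Both are complete proofs.
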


\begin{proof}
We will use repeatedly the following observation: let $F : M \to N$ be a local diffeomorphism between manifolds which is also injective. Because $F$ is a local diffeomorphism, it is an open map. In particular, the image $V = F(M)$ is an open subset of $N$. The map $f : M \to V$ induced by $F$ is a local diffeomorphism which is bijective. Thus, $f$ is a diffeomorphism. In terms of $F$, this means that $F$ is a diffeomorphism onto its open image.

Now, because $J \times S$ is a closed subset of $\tilde{J} = (-\epsilon,\epsilon)$, there exists a smooth map $\tilde{\Sigma} : S \times \tilde{J} \to \RR^n$ extending $\Sigma$ (by the Extension Lemma for Smooth Functions) \cite[Lemma 2.26]{Lee12}).

Let $U$ denote the set of points $z \in S \times \tilde{J}$ such that $T_z \tilde{\Sigma} : T_z(S \times \tilde{J}) \to T_{\Sigma(z)}\RR^n$ is bijective. For $z \in S \times J$, we have that $T_z \Sigma : T_z(S \times J) \to T_{\Sigma(z)}\RR^n$ is bijective because $\Sigma$ is an embedding (between manifolds with boundary of the same dimension). Thus, the set $U$ contains all of $S \times J$. On the other hand, $U$ is open, and so, $U$ is a neighbourhood of $S \times \{0\}$ in $S \times \tilde{J}$. In particular, because $S$ is compact, there exists an open interval $I$ containing $0$ such that $S \times I \subset U$. However, by the Inverse Function Theorem, this means that $\tilde{\Sigma}|_{S \times I} : S \times I \to \RR^n$ is a local diffeomorphism, and is, in particular, locally injective. Because $\tilde{\Sigma}|_{S \times \{0\}} = \Sigma|_{S \times \{0\}}$ is injective, it follows from Lemma \ref{lem:injective-slice} that there exists $0 < \delta < \epsilon$ such that $\tilde{\Sigma}|_{S \times (-\delta,\delta)}$ is injective.

Now, consider the interval $I = (-\delta,\epsilon)$. We wish to show that $\Sigma_2 = \tilde{\Sigma}|_{S \times (-\delta,\epsilon)}$ is an embedding. To this end, first note that, because $\tilde{\Sigma}|_{S \times (-\delta,\delta)}$ is injective, it is a diffeomorphism onto its open image. Now, consider $S_0 = \Sigma(S \times \{0\})$ and let $\cB_{S_0}$ and $\cE_{S_0}$ be as in Theorem \ref{thm:detailed-separation}. Then, because $\overline{\cB_{S_0}}$ and $\overline{\cE_{S_0}}$ are manifolds with common boundary 
$S_0$, we cannot have that $\tilde{\Sigma}(S \times (-\delta,\delta)) \subset \overline{\cB_{S_0}}$ or $\tilde{\Sigma}(S \times (-\delta,\delta)) \subset \overline{\cE_{S_0}}$. On the other hand, $\tilde{\Sigma}(S \times (-\delta,0))$ and $\tilde{\Sigma}(S \times (0,\delta))$ are disjoint connected subsets of $\mathbb{R}^n \setminus S_0$. Thus, we must either have that
\begin{equation*}
\tilde{\Sigma}(S \times (-\delta,0)) \subset \cB_{S_0}, \qquad \tilde{\Sigma}(S \times (0,\delta)) \subset \cE_{S_0},    
\end{equation*}
or
\begin{equation*}
\tilde{\Sigma}(S \times (-\delta,0)) \subset \cE_{S_0}, \qquad \tilde{\Sigma}(S \times (0,\delta)) \subset \cB_{S_0}.
\end{equation*}
In either case, because $\tilde{\Sigma}(S \times (0,\epsilon))$ contains $\tilde{\Sigma}(S \times (0,\delta))$, we therefore have that
\begin{equation*}
\tilde{\Sigma}(S \times (-\delta,0)) \cap \tilde{\Sigma}(S \times (0,\epsilon)) = \emptyset.
\end{equation*}
Hence, because $\tilde{\Sigma}|_{S \times (-\delta,\delta)}$ is injective,
\begin{equation*}
\tilde{\Sigma}(S \times (-\delta,0)) \cap \tilde{\Sigma}(S \times [0,\epsilon)) = \emptyset.
\end{equation*}
However, $\tilde{\Sigma}|_{S \times (-\delta,0)}$ and $\tilde{\Sigma}|_{S \times [0,\epsilon)} = \Sigma$ are both injective. Therefore, $\tilde{\Sigma}|_{S \times (-\delta,\epsilon)}$ is injective. However, this means that $\tilde{\Sigma}|_{S \times (-\delta,\epsilon)}$ is a diffeomorphism onto its open image. That is, $\Sigma_2 = \tilde{\Sigma}|_{S \times (-\delta,\epsilon)}$ is an embedding, as claimed. It is also clear that $\Sigma_2$ extends $\Sigma$.
\end{proof}

We immediately obtain the following as a corollary of \ref{lem:extension-lemma-for-embeddings}.

\begin{lemma}\label{lem:poloidal-toroidal-generators-half-open-int}
Lemma \ref{lem:poloidal-toroidal-generators-open-int} still holds when replacing ``open interval containing $0$" with ``interval of the form $[0,\epsilon)$ for some $\epsilon > 0$".
\end{lemma}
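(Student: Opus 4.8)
The plan is to deduce the half-open case directly from the open-interval case already established in Lemma \ref{lem:poloidal-toroidal-generators-open-int}, using the extension lemma to enlarge the domain of the embedding. Suppose $\Sigma : S \times [0,\epsilon) \to \RR^3$ is an embedding and $\gamma_P,\gamma_T : [0,1] \to S$ are smooth closed curves whose classes $[\gamma_P^0]$ and $[\gamma_T^0]$ generate $\cP_0$ and $\cT_0$ respectively. By Lemma \ref{lem:extension-lemma-for-embeddings}, there exist $\delta > 0$ and an embedding $\Sigma_2 : S \times (-\delta,\epsilon) \to \RR^3$ with $\Sigma_2|_{S \times [0,\epsilon)} = \Sigma$. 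The interval $I = (-\delta,\epsilon)$ is open and contains $0$, so $\Sigma_2$ meets the hypotheses of Lemma \ref{lem:poloidal-toroidal-generators-open-int}.

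First I would check that all the relevant objects coincide for the two embeddings on the common slab $S \times [0,\epsilon)$. Since $\Sigma_2$ agrees with $\Sigma$ there, for each $i \in [0,\epsilon)$ the torus $S_i = \Sigma(\{i\} \times S) = \Sigma_2(\{i\} \times S)$ is literally the same embedded torus in $\RR^3$; hence its poloidal and toroidal subgroups $\cP_i,\cT_i \subset H_1(S_i)$ are the same whether read off through $\Sigma$ or $\Sigma_2$, because these subgroups depend only on $S_i$ as a subset of $\RR^3$ (via the inclusions into $\overline{\cB_{S_i}}$ and $\overline{\cE_{S_i}}$) and not on the parametrising map. Likewise, for each such $i$ and any smooth closed curve $\gamma : [0,1] \to S$, the curve $\gamma^i(t) = \Sigma(\gamma(t),i) = \Sigma_2(\gamma(t),i)$ is unchanged, so in particular $[\gamma_P^i]$ and $[\gamma_T^i]$ are the same classes for the two embeddings. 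Consequently the generating hypothesis at $i = 0$ holds for $\Sigma_2$ as well.

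Then I would simply invoke Lemma \ref{lem:poloidal-toroidal-generators-open-int} for $\Sigma_2$: it yields that for every $i \in (-\delta,\epsilon)$ the classes $[\gamma_P^i]$ and $[\gamma_T^i]$ generate $\cP_i$ and $\cT_i$. Restricting this conclusion to $i \in [0,\epsilon)$ and using the identifications from the previous step gives exactly the desired statement for the original embedding $\Sigma$. Because the argument is a direct appeal to the extension lemma, there is essentially no obstacle; the only point deserving a word of care is the verification that $\cP_i,\cT_i$ and the curves $\gamma_P^i,\gamma_T^i$ are genuinely unaffected by passing from $\Sigma$ to $\Sigma_2$, which is immediate from the fact that $\Sigma_2$ restricts to $\Sigma$ on $S \times [0,\epsilon)$.
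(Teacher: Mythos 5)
Your proof is correct and is essentially the paper's own argument: the paper states this lemma as an immediate corollary of Lemma \ref{lem:extension-lemma-for-embeddings}, i.e.\ extend $\Sigma$ to an embedding on $S \times (-\delta,\epsilon)$ and invoke Lemma \ref{lem:poloidal-toroidal-generators-open-int}. Your additional verification that the tori, their poloidal and toroidal subgroups, and the curves $\gamma_P^i,\gamma_T^i$ are unchanged on the common domain is exactly the (routine) point the paper leaves implicit.
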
 

We are now ready to prove Lemma \ref{lem:diffeomorphism-action-compatibility}.

\begin{proof}[Proof of Lemma \ref{lem:diffeomorphism-action-compatibility}]
For $j \in J$, let $\cP_j$ and $\cT_j$ denote the poloidal and toroidal subgroups of $S_j$. Let $\gamma_P,\gamma_T : [0,1] \to S$ be smooth closed curves such that the homology classes $[\gamma_P^0]$ and $[\gamma_T^0]$ respectively generate $\cP_0$ and $\cT_0$. By lemmas \ref{lem:poloidal-toroidal-generators-open-int} and \ref{lem:poloidal-toroidal-generators-half-open-int}, $[\gamma_P^j]$ generates $\cP_j$ and $[\gamma_T^j]$ generates $\cT_j$. Hence, if $\psi_j : S_j \to S_j$ denotes the diffeomorphism induced by $\cI$, then
\begin{align*}
{\psi_j}_* [\gamma_P^j] &= (\sigma_1)_j[\gamma_P^j],\\
{\psi_j}_* [\gamma_T^j] &= (\sigma_2)_j[\gamma_T^j].
\end{align*}
Let $U = \Sigma(S \times J)$ and $\Psi : U \to U$ denote the diffeomorphism induced from $\cI$ and $\iota_j : S_j \subset U$ is the inclusion. Then
\begin{equation}\label{eq:action-of-Psi-for-all-j}
\begin{split}
\Psi_*[\iota_j \circ \gamma_P^j] =  {\iota_j}_*{\psi_j}_*[\gamma_P^j] &= (\sigma_1)_j~{\iota_j}_*[\gamma_P^j] = (\sigma_1)_j [\iota_j \circ \gamma_P^j],\\
\Psi_*[\iota_j \circ \gamma_T^j] = {\iota_j}_*{\psi_j}_*[\gamma_T^j] &= (\sigma_2)_j~{\iota_j}_*[\gamma_T^j] = (\sigma_2)_j[\iota_j \circ \gamma_T^j].
\end{split}    
\end{equation}

On the other hand, for $j \in J$, through the map $\Sigma$, $\iota_0 \circ \gamma_P^0$ is homologous to $\iota_j \circ \gamma_P^j$ in $U$ and $\iota_0 \circ \gamma_T^0$ is homologous to $\iota_j \circ \gamma_T^j$ in $U$. Hence, Equation \eqref{eq:action-of-Psi-for-all-j} implies that
\begin{equation}\label{eq:conclusion-from-Psi-action}
\begin{split}
(\sigma_1)_j [\iota_j \circ \gamma_P^j] &= \sigma_1 [\iota_0 \circ \gamma_P^0],\\
(\sigma_2)_j [\iota_j \circ \gamma_T^j] &= \sigma_2 [\iota_0 \circ \gamma_T^0].
\end{split}    
\end{equation}
Moreover, for $j \in J$, the map $\Sigma$ gives a smooth deformation retract $U \to S_j$ for $j \in J$. In particular, the inclusions on first homology ${\iota_j}_* : H_1(S_j) \to H_1(U)$ are isomorphisms. Hence, each homology class in Equation \eqref{eq:conclusion-from-Psi-action} is non-zero. Thus, for all $j \in J$,
\begin{equation*}
(\sigma_1)_j = \sigma_1, \qquad (\sigma_2)_j = \sigma_2.     
\end{equation*}
\end{proof}

\section{Solid tori which only posses inversion symmetry}\label{app:for-the-figure}

The purpose of this section is to provide an exposition of basic techniques used to prove the claim about the solid torus in Figure \ref{fig:solid-torus-example} of the introduction. Here we deal with isometries in Euclidean space where the metric is taken to be the standard Euclidean one. For example, the following result is well-known.

\begin{lemma}\label{lem:char-of-isom}
Let $\cI : \RR^n \to \RR^n$ be a map. Then, $\cI$ is an isometry if and only if there exists a matrix $A \in O(n)$ and vector $c \in \RR^n$ such that, for $x \in \RR^n$,
\begin{equation*}
\cI(x) = Ax + c.    
\end{equation*}
\end{lemma}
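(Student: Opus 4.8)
The plan is to prove both implications, with the forward (harder) direction carrying essentially all the work. Throughout, by an \emph{isometry} I mean a distance-preserving map, so that $\cI$ satisfies $\|\cI(x) - \cI(y)\| = \|x - y\|$ for all $x, y \in \RR^n$, where $\|\cdot\|$ is the Euclidean norm and $\langle \cdot, \cdot\rangle$ the associated inner product. The reverse implication is immediate: if $\cI(x) = Ax + c$ with $A \in O(n)$, then $\|\cI(x) - \cI(y)\| = \|A(x-y)\| = \|x - y\|$, since orthogonal matrices preserve the norm.

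For the forward implication, suppose $\cI$ is an isometry. First I would reduce to the case of an isometry fixing the origin by setting $c = \cI(0)$ and $g(x) = \cI(x) - c$; then $g$ is again an isometry with $g(0) = 0$, and it suffices to produce $A \in O(n)$ with $g(x) = Ax$, since this yields $\cI(x) = Ax + c$. Taking $y = 0$ in the distance-preserving identity for $g$ gives $\|g(x)\| = \|x\|$ for all $x$. Polarizing $\|g(x) - g(y)\|^2 = \|x - y\|^2$,
\[
\|g(x)\|^2 - 2\langle g(x), g(y)\rangle + \|g(y)\|^2 = \|x\|^2 - 2\langle x, y\rangle + \|y\|^2,
\]
I may cancel the norm terms (which agree by the previous step) to obtain $\langle g(x), g(y)\rangle = \langle x, y\rangle$ for all $x, y$. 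Thus $g$ preserves inner products.

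The one step I expect to be a genuine obstacle is passing from inner-product preservation to linearity, and this is where the argument really lives. Let $e_1, \ldots, e_n$ be the standard orthonormal basis. Since $g$ preserves inner products, $g(e_1), \ldots, g(e_n)$ is again an orthonormal set, hence a basis of $\RR^n$. For arbitrary $x$, expanding $g(x)$ in this basis and using inner-product preservation once more gives
\[
g(x) = \sum_{i=1}^n \langle g(x), g(e_i)\rangle\, g(e_i) = \sum_{i=1}^n \langle x, e_i\rangle\, g(e_i),
\]
which exhibits $g$ as the linear map $A$ whose $i$-th column is $g(e_i)$; that is, $g(x) = Ax$. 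Since $A$ carries the orthonormal basis $\{e_i\}$ to the orthonormal set $\{g(e_i)\}$, it satisfies $A^\top A = I$, so $A \in O(n)$. Therefore $\cI(x) = Ax + c$ with $A \in O(n)$ and $c \in \RR^n$, completing the proof.
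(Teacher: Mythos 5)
Your proof is correct and complete: the reduction to an origin-fixing isometry, the polarization identity to upgrade norm preservation to inner-product preservation, and the expansion in the orthonormal frame $g(e_1),\ldots,g(e_n)$ to extract linearity are all carried out properly, and notably your argument never needs to assume continuity or surjectivity of $\cI$, which is the only real subtlety in this statement. The paper itself offers no proof, stating the lemma as well-known, so there is nothing to compare against; your argument is the standard one and is exactly what one would supply to make the lemma self-contained.
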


We will consider the following standard type of tubular neighbourhood.

\begin{proposition}\label{prop:tubular-nbhd}
Let $\gamma \subset \RR^3$ be an embedded circle and $R > 0$. Consider the set
\begin{equation*}
N(\gamma,R) = \{x \in \RR^3 : \dist(x,\gamma) \leq R\}.
\end{equation*}
Let $t : \gamma \to \RR^3$ be a smooth unit tangent vector to $\gamma$. For $x \in \gamma$, set
\begin{equation*}
D_{R,x} = \{y \in \RR^3 : (x-y) \cdot t(x) = 0 \text{ and } \|x-y\| \leq R\}.    
\end{equation*}
Then, for $R$ small-enough,
\begin{equation*}
N(\gamma,R) = \cup_{x \in \gamma}D_{R,x}
\end{equation*}
and $N(\gamma,R)$ is an embedded solid torus with boundary
\begin{equation*}
\partial N(\gamma,R) = \{x \in \RR^3 : \dist(x,\gamma) = R\}.    
\end{equation*}
\end{proposition}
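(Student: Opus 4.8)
The plan is to realise $N(\gamma,R)$ as the image, under the endpoint map, of the closed radius-$R$ disk bundle of the normal bundle of $\gamma$, and to show this map is an embedding once $R$ is small. First I would introduce the normal bundle $N\gamma = \{(x,v) \in \gamma \times \RR^3 : v \cdot t(x) = 0\}$, a smooth rank-$2$ vector bundle over $\gamma$, and observe that it is orientable: from $T\RR^3|_\gamma = T\gamma \oplus N\gamma$ with both $T\RR^3$ and $T\gamma$ orientable, $N\gamma$ inherits an orientation. Since an orientable rank-$2$ bundle over the circle is trivial, the closed disk bundle $D_R\gamma = \{(x,v) \in N\gamma : \|v\| \le R\}$ is diffeomorphic to $\Sone \times D$, an abstract solid torus. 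Writing the fibre disk as $D_{R,x}$, the definition makes $\cup_{x \in \gamma} D_{R,x}$ exactly the image $E(D_R\gamma)$ of the endpoint map $E : N\gamma \to \RR^3$, $E(x,v) = x + v$.

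Next I would establish the two set equalities, which in fact hold for every $R > 0$. The inclusion $\cup_x D_{R,x} \subseteq N(\gamma,R)$ is immediate, since $y \in D_{R,x}$ gives $\dist(y,\gamma) \le \|y-x\| \le R$. For the reverse inclusion I would use compactness of $\gamma$ to pick a nearest point $x \in \gamma$ to any $y$ with $\dist(y,\gamma) \le R$; as $x$ minimises $z \mapsto \|y-z\|^2$ along $\gamma$, the first-order condition yields $(y-x)\cdot t(x) = 0$, so that $y \in D_{R,x}$ with $\|y-x\| = \dist(y,\gamma) \le R$. Hence $N(\gamma,R) = \cup_x D_{R,x} = E(D_R\gamma)$, with no smallness needed yet.

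The substantive step is showing that $E$ restricts to an embedding of $D_R\gamma$ for small $R$. I would first compute $dE$ along the zero section: identifying $T_{(x,0)}N\gamma$ with $\RR\, t(x) \oplus N_x\gamma$, one gets $dE_{(x,0)}(a\, t(x), u) = a\, t(x) + u$, an isomorphism onto $\RR^3$. By the Inverse Function Theorem and compactness of $\gamma$, $E$ is a local diffeomorphism, hence locally injective, on $D_\epsilon\gamma$ for some $\epsilon > 0$. I would then run a compactness-and-contradiction argument exactly parallel to Lemma \ref{lem:injective-slice}: were $E$ non-injective on $D_R\gamma$ for every small $R$, extracting convergent sequences of distinct identified pairs produces a common limit on the zero section, contradicting local injectivity there. \emph{This global injectivity for small $R$ is the main obstacle}; the remaining ingredients are routine linear algebra or elementary distance estimates. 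Once $E|_{D_R\gamma}$ is an injective local diffeomorphism it is a diffeomorphism onto its open image (the observation used in the proof of Lemma \ref{lem:extension-lemma-for-embeddings}) and an embedding of the compact manifold-with-boundary $D_R\gamma$, so $N(\gamma,R) \cong \Sone \times D$ is an embedded solid torus.

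Finally, for the boundary I would note that injectivity of $E$ on $D_R\gamma$ forces $\dist(E(x,v),\gamma) = \|v\|$ for all $(x,v) \in D_R\gamma$: if some $x' \ne x$ were strictly nearer to $y = E(x,v)$, the nearest-point argument would give $y = E(x',v')$ with $\|v'\| < \|v\| \le R$, contradicting injectivity. Thus $E$ carries $\{\|v\| = R\}$ onto $\{y : \dist(y,\gamma) = R\}$, and since $E$ is an embedding of the manifold-with-boundary $D_R\gamma$, this image is precisely the topological boundary $\partial N(\gamma,R)$, completing the proof.
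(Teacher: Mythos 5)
Your proof is correct. There is, however, nothing in the paper to compare it against: Proposition \ref{prop:tubular-nbhd} is stated without proof, as a standard tubular-neighbourhood fact whose only role is to justify the example in Figure \ref{fig:solid-torus-example}, so what you have supplied is a self-contained proof of a result the paper takes for granted. Your route is the classical proof of the tubular neighbourhood theorem specialised to curves in $\RR^3$: normal disk bundle (trivial because an orientable rank-$2$ bundle over $\Sone$ is trivial), endpoint map $E(x,v)=x+v$, invertibility of $dE$ along the zero section, and global injectivity for small $R$ by the same compactness-and-contradiction device as the paper's Lemma \ref{lem:injective-slice}. A genuinely nice feature of your write-up is the observation that the set identity $N(\gamma,R)=\cup_{x\in\gamma}D_{R,x}=E(D_R\gamma)$ holds for \emph{every} $R>0$, via the nearest-point first-order condition $(y-x)\cdot t(x)=0$; this cleanly isolates where smallness of $R$ is actually needed, namely for injectivity of $E$, hence for the solid-torus structure and the boundary description. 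Two steps are compressed but standard and not gaps: (i) to pass from ``injective local diffeomorphism on the closed disk bundle'' to ``embedding'', either invoke that an injective continuous map from a compact space to a Hausdorff space is a homeomorphism onto its image, or run your injectivity argument at a radius $R_0>R$ so that $E$ is an injective local diffeomorphism on the \emph{open} bundle of radius $R_0$ containing $D_R\gamma$ (this also feeds your last step, where you need points at distance slightly greater than $R$ along the normal ray); (ii) the identification of $E(\{\|v\|=R\})$ with the topological boundary of the image uses invariance of domain: the image of the open bundle is open in $\RR^3$, while no point in the image of the sphere bundle can have a Euclidean neighbourhood contained in the image.
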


One can reconstruct the curve $\gamma$ from $N(\gamma,R)$ in Proposition \ref{prop:tubular-nbhd} in the following standard way.

\begin{lemma}\label{lem:reconstrucing-the-curve}
Let $N(\gamma,R)$ be a tubular neighbourhood with $R$ small-enough as in Proposition \ref{prop:tubular-nbhd}. Let $n : \partial N(\gamma,R) \to \RR^3$ be the outward unit normal field of $\partial N(\gamma,R)$ in $\RR^3$. Then,
\begin{equation*}
\gamma = \{y- R~n(y) : y \in \partial N(\gamma,R)\}.     
\end{equation*}
\end{lemma}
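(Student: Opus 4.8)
The plan is to exploit the nearest-point projection associated with the tubular neighbourhood. For $R$ small enough, as guaranteed by Proposition \ref{prop:tubular-nbhd}, standard tubular neighbourhood theory furnishes a well-defined smooth nearest-point projection $\pi : N(\gamma,R) \to \gamma$: every $y \in N(\gamma,R)$ has a unique closest point $\pi(y) \in \gamma$, the segment from $\pi(y)$ to $y$ meets $\gamma$ orthogonally, and $\|y - \pi(y)\| = \dist(y,\gamma)$. In particular, for $y \in \partial N(\gamma,R) = \{x \in \RR^3 : \dist(x,\gamma) = R\}$ we have $\|y - \pi(y)\| = R$ and $y - \pi(y)$ is orthogonal to $\gamma$ at $\pi(y)$.

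The key step is to identify the outward unit normal $n(y)$ with the unit vector $(y - \pi(y))/R$ pointing from the curve to $y$. To see this, consider $\rho = \dist(\cdot,\gamma)$, which is smooth on $N(\gamma,R) \setminus \gamma$ for $R$ small, with $\partial N(\gamma,R) = \rho^{-1}(R)$ a regular level set. Since $\rho$ increases in the outward direction, its gradient $\nabla \rho$ is the outward normal direction, and a direct computation gives $\nabla \rho(y) = (y - \pi(y))/\rho(y)$, which on $\partial N(\gamma,R)$ equals $(y-\pi(y))/R$ and has unit length. Hence $n(y) = (y - \pi(y))/R$, and therefore
\[
y - R\, n(y) = y - (y - \pi(y)) = \pi(y) \in \gamma.
\]
This proves the inclusion $\{y - R\, n(y) : y \in \partial N(\gamma,R)\} \subseteq \gamma$.

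For the reverse inclusion, I would show every $x \in \gamma$ is realised. Picking any unit vector $v$ orthogonal to $\gamma$ at $x$ and setting $y = x + R v$, one checks that for $R$ small $\dist(y,\gamma) = R$ (so $y \in \partial N(\gamma,R)$) with $\pi(y) = x$; by the identification above $n(y) = v$, whence $y - R\, n(y) = x$. Thus $\gamma \subseteq \{y - R\, n(y) : y \in \partial N(\gamma,R)\}$, and the two sets coincide.

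The main obstacle is the second step: rigorously justifying that the outward normal coincides with $(y - \pi(y))/R$. This rests on the smoothness and unit-gradient property of the distance function inside a tube of small radius, together with the orthogonality of $y - \pi(y)$ to $\gamma$ --- facts that encode precisely the ``$R$ small enough'' hypothesis and the tubular structure of Proposition \ref{prop:tubular-nbhd}. Everything else is routine.
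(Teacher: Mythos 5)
Your proof is correct. The paper in fact gives no proof of this lemma at all --- it is presented as a ``standard'' reconstruction immediately following Proposition \ref{prop:tubular-nbhd} --- so your argument fills a gap rather than parallels an existing one, and it does so in exactly the standard way: identifying the outward normal with $\nabla \dist(\cdot,\gamma) = (y-\pi(y))/R$ via the nearest-point projection $\pi$, and noting that the facts you invoke (uniqueness and smoothness of $\pi$, pairwise disjointness of the normal disks $D_{R,x}$, smoothness of the distance function off $\gamma$) are precisely the content of the ``$R$ small-enough'' hypothesis and of the disk decomposition $N(\gamma,R)=\cup_{x\in\gamma}D_{R,x}$ in Proposition \ref{prop:tubular-nbhd}.
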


Concerning isometries, Lemma \ref{lem:reconstrucing-the-curve} implies that isometries of tubular neighbourhoods are the isometries of the inner curve. More precisely, we have the following.

\begin{corollary}\label{cor:isometry-reduction}
Let $R > 0$ and $\gamma_1,\gamma_2 \subset \RR^3$ be embedded circles. Then, for $R$ small-enough,
\begin{equation*}
N(\gamma_1,R) = N(\gamma_2,R) \iff \gamma_1 = \gamma_2.
\end{equation*}
In particular, for any isometry $\cI : \RR^3 \to \RR^3$, for $R$ small-enough, one has that
\begin{equation*}
\cI(\gamma_1) = \gamma_1 \iff \cI(N(\gamma_1,R)) = N(\gamma_1,R). 
\end{equation*}
\end{corollary}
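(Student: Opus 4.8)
The plan is to reduce everything to Lemma \ref{lem:reconstrucing-the-curve}, which recovers the inner curve purely from the boundary hypersurface and its outward unit normal. The reverse implications of both equivalences are immediate and require no work: the set $N(\gamma,R)$ depends only on $\gamma$, so equal curves have identical tubular neighbourhoods, and if $\cI(\gamma_1) = \gamma_1$ then applying $\cI$ to the defining distance inequality gives $\cI(N(\gamma_1,R)) = N(\gamma_1,R)$. All the content therefore lies in the forward directions, and the engine driving them is the reconstruction formula.

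First I would prove the forward direction of the equivalence $N(\gamma_1,R) = N(\gamma_2,R) \iff \gamma_1 = \gamma_2$. I would fix $R$ small enough that Proposition \ref{prop:tubular-nbhd} applies simultaneously to both $\gamma_1$ and $\gamma_2$, so that $N(\gamma_1,R)$ and $N(\gamma_2,R)$ are embedded solid tori whose boundaries are the level sets $\{\dist(\cdot,\gamma_i) = R\}$. Assuming $N(\gamma_1,R) = N(\gamma_2,R)$ as subsets of $\RR^3$, taking topological boundaries gives a common hypersurface $S = \partial N(\gamma_1,R) = \partial N(\gamma_2,R)$. The outward unit normal of a regular domain is determined by the domain alone, since it points out of the common region; hence the two outward normal fields $n_1$ and $n_2$ coincide on $S$. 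Applying Lemma \ref{lem:reconstrucing-the-curve} to each curve then yields
\begin{equation*}
\gamma_1 = \{y - R\,n_1(y) : y \in S\} = \{y - R\,n_2(y) : y \in S\} = \gamma_2.
\end{equation*}

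For the ``in particular'' statement, the key observation is that an isometry $\cI$ is distance-preserving, so $\dist(\cI(x),\cI(\gamma_1)) = \dist(x,\gamma_1)$ for all $x$; substituting $y = \cI(x)$ into the definition of $N$ gives the equivariance $\cI(N(\gamma_1,R)) = N(\cI(\gamma_1),R)$. Because $\cI(\gamma_1)$ is an embedded circle congruent to $\gamma_1$, any $R$ small enough for $\gamma_1$ is automatically small enough for $\cI(\gamma_1)$, the relevant threshold depending only on Euclidean geometry that $\cI$ preserves. I would then apply the first equivalence with $\gamma_2 = \cI(\gamma_1)$ to obtain
\begin{equation*}
\cI(N(\gamma_1,R)) = N(\gamma_1,R) \iff N(\cI(\gamma_1),R) = N(\gamma_1,R) \iff \cI(\gamma_1) = \gamma_1.
\end{equation*}

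The only genuine subtlety, and thus the main point to be careful about, is the uniform choice of $R$: one must fix a single threshold that makes Proposition \ref{prop:tubular-nbhd} valid for $\gamma_1$, for $\gamma_2$, and (in the second part) for $\cI(\gamma_1)$ at once. For two prescribed curves this is just the minimum of the two thresholds, and for the isometry statement it is automatic from the isometry-invariance of the tubular-neighbourhood construction. Once this uniformity is secured, every remaining step is a direct citation of the preceding lemmas, so I expect no serious difficulty beyond bookkeeping.
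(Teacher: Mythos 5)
Your proposal is correct and follows exactly the route the paper intends: the paper states Corollary \ref{cor:isometry-reduction} as an immediate consequence of the reconstruction formula in Lemma \ref{lem:reconstrucing-the-curve}, combined with the isometry-equivariance $\cI(N(\gamma_1,R)) = N(\cI(\gamma_1),R)$ of the tubular neighbourhood, which is precisely your argument. Your attention to choosing a single threshold $R$ valid for both curves (automatic for $\cI(\gamma_1)$ by isometry-invariance) is the right bookkeeping and completes the proof.
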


We now consider some characteristics of curves which restrict their Euclidean isometries. For a subset $K \subset \RR^n$ we set
\begin{equation*}
-K = \{-x : x \in K\}.    
\end{equation*}

\begin{lemma}\label{lem:isometries-map-to-zero}
Let $K \subset \RR^n$ be a compact subset. Let
\begin{align*}
\rho(K) = \max_{x \in K} \|x\|.
\end{align*}
Suppose that $-K = K$. Then, for any $x,y \in K$ with $d(x,y) = \diam K$, we have that
\begin{equation*}
x+y = 0, \quad d(x,y) = 2\rho(K).    
\end{equation*}
In particular, if $\cI : \RR^n \to \RR^n$ is an isometry such that $\cI(K) = K$, then
\begin{equation*}
\cI(0) = 0.    
\end{equation*}
In particular, $\cI$ is a linear transformation.
\end{lemma}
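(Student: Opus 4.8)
The plan is to first prove the purely metric assertion about diameter-realising pairs, and only afterwards bring in the isometry, deducing $\cI(0)=0$ from the facts that an isometry preserves both the diameter of $K$ and midpoints of pairs of points. I would begin by recording the auxiliary identity $\diam K = 2\rho(K)$, which is where the symmetry hypothesis $-K=K$ enters. For the lower bound, every $x\in K$ has $-x\in K$, so $\diam K \geq \|x-(-x)\| = 2\|x\|$; taking the supremum over $x\in K$ gives $\diam K \geq 2\rho(K)$. For the upper bound, any $a,b\in K$ satisfy $\|a-b\| \leq \|a\|+\|b\| \leq 2\rho(K)$. Hence $\diam K = 2\rho(K)$, which in particular yields the claimed value $d(x,y)=2\rho(K)$ for any diameter-realising pair.

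Next I would establish $x+y=0$ for $x,y\in K$ with $\|x-y\|=\diam K$ via the equality case of the triangle inequality. If $\rho(K)=0$ then $K=\{0\}$ and everything is trivial, so assume $\rho(K)>0$. Writing $\|x-y\| = \|x+(-y)\| \leq \|x\|+\|y\| \leq 2\rho(K)$, the assumed equality $\|x-y\|=2\rho(K)$ forces simultaneously $\|x\|=\|y\|=\rho(K)$ and equality in $\|x+(-y)\|\leq\|x\|+\|{-y}\|$. In Euclidean space the latter holds only when $x$ and $-y$ are non-negative scalar multiples of one another; since both have the same positive norm $\rho(K)$, the scalar must be $1$, giving $-y=x$, i.e. $x+y=0$.

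For the isometry statement, the key point is that the midpoint is characterised purely metrically: in $\RR^n$ the midpoint $m$ of $x,y$ is the unique point with $\|x-m\|=\|m-y\|=\tfrac12\|x-y\|$, by strict convexity of the Euclidean ball. Consequently any distance-preserving map sends the midpoint of a pair to the midpoint of the image pair. Now choose $x,y\in K$ realising $\diam K$ (such a pair exists since $(a,b)\mapsto\|a-b\|$ attains its maximum on the compact set $K\times K$); by the first part their midpoint is $0$. Since $\cI(K)=K$ is an isometry it preserves the diameter, so $\cI(x),\cI(y)\in K$ again realise $\diam K$, and again by the first part their midpoint is $0$. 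Midpoint preservation then gives $\cI(0)=0$. Finally, applying Lemma \ref{lem:char-of-isom} we write $\cI(x)=Ax+c$ with $A\in O(n)$ and $c\in\RR^n$; evaluating at $0$ gives $c=\cI(0)=0$, so $\cI(x)=Ax$ is linear.

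The argument is essentially routine, and the two places needing care are both in the last two paragraphs: first, extracting the directional conclusion $-y=x$ (rather than the weaker $\|x\|=\|y\|$) from the triangle inequality, which relies on the equality case in a strictly convex norm; and second, arranging the deduction of $\cI(0)=0$ so that it uses only distance preservation and the metric characterisation of the midpoint, rather than prematurely invoking the affine form of $\cI$ — otherwise the appeal to Lemma \ref{lem:char-of-isom} at the end would be circular.
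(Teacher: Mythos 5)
Your proof is correct, but it runs on different machinery than the paper's. For the metric part, the paper uses the parallelogram identity $\|x-y\|^2 + \|x+y\|^2 = 2\|x\|^2 + 2\|y\|^2$, which delivers the bound $\|x-y\| \le 2\rho(K)$ and the equality condition $x+y=0$ in a single algebraic stroke; you instead use the equality case of the triangle inequality together with strict convexity of the Euclidean norm, which is equally valid but requires the extra step of upgrading ``$x$ and $-y$ are positive multiples of each other'' to $x=-y$ via equal norms. For the isometry part, the paper invokes Lemma \ref{lem:char-of-isom} \emph{before} concluding $\cI(0)=0$: since $\cI$ is affine, it preserves midpoints, so the midpoint $0$ of a diameter-realising pair maps to the midpoint of the image pair, which is again $0$. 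You instead characterise the midpoint purely metrically (as the unique point equidistant from $x$ and $y$ at distance $\tfrac12\|x-y\|$, again by strict convexity) and so obtain midpoint preservation from distance preservation alone, deferring Lemma \ref{lem:char-of-isom} to the final sentence. Your route is more self-contained and would generalise to isometries of any strictly convex normed space without a structure theorem; the paper's is shorter given that Lemma \ref{lem:char-of-isom} is already on the table. One correction to your closing remark: the paper's order of operations is \emph{not} circular. Lemma \ref{lem:char-of-isom} (every isometry of $\RR^n$ is $x \mapsto Ax+c$ with $A \in O(n)$) is established independently of the present lemma, so using the affine form to prove midpoint preservation and then reading off $c = \cI(0) = 0$ is perfectly sound; your precaution is aesthetically reasonable but not logically necessary.
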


\begin{proof}
For $x,y \in \RR^n$,
\begin{equation*}
\|x-y\|^2 + \|x+y\|^2 = 2\|x\|^2+2\|y\|^2.    
\end{equation*}
Hence, for $x,y \in K$,
\begin{equation}\label{eq:rho-inequality}
\|x-y\| \leq 2\rho(K)
\end{equation}
with equality if and only if
\begin{equation}\label{eq:equalityiff}
x+y = 0. 
\end{equation}
Moreover, because $-K = K$, setting $x \in K$ with $\|x\| = \rho(K)$ and $y = -x \in K$, we have $d(x,y) = 2\rho(K)$. Therefore, by equations \eqref{eq:rho-inequality} and \eqref{eq:equalityiff}, $\diam K = 2\rho(K)$ and for any $x,y \in K$ with $d(x,y) = \diam K$, we have that
\begin{equation*}
x+y = 0, \quad d(x,y) = 2\rho(K),    
\end{equation*}
as claimed.

Concerning isometries, let $\cI : \RR^n \to \RR^n$ be an isometry such that $\cI(K) = K$. Fix $x \in K$ with $\|x\| = \rho(K)$. Then, setting $y=-x \in K$, we have that $d(x,y) = \diam K$. Therefore, because $\cI$ is an isometry,
\begin{equation*}
d(\cI(x),\cI(y)) = \diam K.    
\end{equation*}
Thus, by the previous paragraph,
\begin{equation*}
\cI(x) + \cI(y) = 0.    
\end{equation*}
On the other hand, from Lemma \ref{lem:char-of-isom}, $\cI$ is an affine transformation and so
\begin{equation*}
\cI\left(\frac{x+y}{2}\right) = \frac{\cI(x)+\cI(y)}{2}.
\end{equation*}
Therefore,
\begin{equation*}
\cI(0) = 0.
\end{equation*}
\end{proof}

We now consider a curve of the form
\begin{equation*}
\gamma(t) = (r(t) \cos t \sin \theta(t),r(t) \sin t  \sin\theta(t),r(t)\cos\theta(t)),
\end{equation*}
where $r,\theta : \RR \to \RR$ are smooth $2\pi$-periodic functions and $r > 0$. We give conditions on $r$ and $\theta$ such that the image
\begin{equation*}
\Gamma = \gamma(\RR)    
\end{equation*}
is an embedded circle in $\RR^3$ and does not possess any Euclidean isometry except the identity and the isometry $(x,y,z) \mapsto (-x,-y,-z)$. We will use repeatedly that
\begin{equation*}
\|\gamma(t)\| = r(t).    
\end{equation*}
We will consider for $i,j,k \in \{+1,-1\}$, the isometry given by
\begin{equation*}
\cI_{i,j,k}(x,y,z) = (ix,jy,kz).    
\end{equation*}

\begin{lemma}\label{lem:regularity-of-curve}
Assume that $0 < \theta(t) < \pi$ for all $t \in \RR$. Then, curve $\gamma$ is a regular closed curve with period $2\pi$, in the sense that
\begin{enumerate}
    \item $\gamma'(t) \neq 0$ for all $t \in \RR$,
    \item $\gamma(2\pi + t) = \gamma(t)$ for all $t \in \RR$,
    \item $\gamma|_{[0,2\pi)}$ is injective. \label{item:injective}
\end{enumerate}
In particular, the image of $\gamma$ is a smoothly embedded circle in $\RR^3$. In fact, a stronger property holds than \ref{item:injective}. Namely, for any $i,j,k \in \{+1,-1\}$ and for any $s,t \in \RR$ such that
\begin{equation*}
\gamma(s) = \cI_{i,j,k}(\gamma(t)),    
\end{equation*}
one has that
\begin{equation*}
r(s) = r(t), \quad \cos s = i\cos t, \quad \sin s = j\sin t, \quad \cos \theta(s) = k\cos\theta(t).
\end{equation*}
So that, if $i = 1 = j$, then $s-t \in 2\pi\ZZ$.
\end{lemma}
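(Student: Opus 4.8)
The plan is to exploit the fact that $\gamma$ is nothing more than spherical coordinates: the point $\gamma(t)$ has Euclidean norm $\|\gamma(t)\| = r(t)$, azimuthal angle $t$, and polar angle $\theta(t)$. Property (ii) is then immediate, since $r$, $\theta$, $\cos$ and $\sin$ are all $2\pi$-periodic, so that $\gamma(2\pi+t)=\gamma(t)$. My strategy is to prove the stronger rigidity statement first and then read off injectivity (iii) as the special case $i=j=k=1$, handling regularity (i) by a separate short computation.

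For the stronger statement, suppose $\gamma(s) = \cI_{i,j,k}(\gamma(t))$. Since $\cI_{i,j,k}$ merely changes signs of coordinates it is a Euclidean isometry, so taking norms gives $r(s) = \|\gamma(s)\| = \|\gamma(t)\| = r(t)$; I call this common positive value $r$ and divide every coordinate equation by it. The third coordinate reads $\cos\theta(s) = k\cos\theta(t)$, which is already one of the claimed identities. The key point --- and the one place the hypothesis $0<\theta<\pi$ does real work --- is that I do \emph{not} get $\theta(s)=\theta(t)$ when $k=-1$; instead I use $k^2=1$ together with $\sin^2 = 1-\cos^2$ to obtain $\sin^2\theta(s)=\sin^2\theta(t)$, and then positivity of both sines (forced by $0<\theta<\pi$) upgrades this to $\sin\theta(s)=\sin\theta(t)>0$. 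Dividing the first two coordinate equations by this common nonzero factor yields $\cos s = i\cos t$ and $\sin s = j\sin t$. When $i=j=1$ these say $(\cos s,\sin s)=(\cos t,\sin t)$, whence $s-t\in 2\pi\ZZ$.

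Injectivity (iii) is now the instance $i=j=k=1$, that is, $\cI_{1,1,1}=\id$: if $\gamma(s)=\gamma(t)$ with $s,t\in[0,2\pi)$ then $s-t\in 2\pi\ZZ\cap(-2\pi,2\pi)=\{0\}$, so $s=t$. For regularity (i), I would compute the component of $\gamma'(t)$ along the azimuthal direction $e(t) = (-\sin t,\cos t,0)$. A direct differentiation shows that all the $r'$- and $\theta'$-terms cancel and one is left with $\gamma'(t)\cdot e(t) = r(t)\sin\theta(t)$, which is strictly positive since $r>0$ and $0<\theta<\pi$; hence $\gamma'(t)\neq 0$ for every $t$. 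Combining (i)--(iii) with the $2\pi$-periodicity, $\gamma$ descends to an injective immersion $\RR/2\pi\ZZ \to \RR^3$, which is an embedding because the domain is compact, so the image is a smoothly embedded circle.

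The only genuine subtlety in the whole argument is the $k=-1$ case in the second paragraph, where one must resist assuming $\theta(s)=\theta(t)$ and instead recover only equality of the sines; everything else is routine, and I would present the velocity computation in (i) by recording the final cancellation rather than writing out every term.
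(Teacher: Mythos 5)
Your proposal is correct and follows essentially the same route as the paper's proof: prove the rigidity statement by reducing the coordinate equations using positivity of $r$ and of $\sin\theta$ (forced by $0<\theta<\pi$), read off injectivity as the case $i=j=k=1$, and check regularity by a direct velocity computation. The only differences are cosmetic and equally valid: you obtain $r(s)=r(t)$ by noting $\cI_{i,j,k}$ is a norm-preserving linear map rather than by squaring and adding all three component equations, you deduce $\sin\theta(s)=\sin\theta(t)$ from the third equation via $k^2=1$ rather than from the first two, and you verify $\gamma'(t)\neq 0$ via the azimuthal component $\gamma'(t)\cdot(-\sin t,\cos t,0)=r(t)\sin\theta(t)>0$ rather than the paper's full computation $\|\gamma'(t)\|^2 = r'(t)^2+r(t)^2\left(\sin^2\theta(t)+\theta'(t)^2\right)$.
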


\begin{proof}
One computes that
\begin{equation*}
\|\gamma'(t)\|^2 = r'(t)^2 + r(t)^2(\sin(\theta(t))^2 + \theta'(t)^2).    
\end{equation*}
Hence, for all $t \in \RR$, because $0<\theta(t)<\pi$, we have that $\sin(\theta(t)) > 0$ and so
\begin{equation*}
\|\gamma'(t)\|^2 \geq  r(t)^2 \sin(\theta(t))^2 > 0, 
\end{equation*}
because $r(t) > 0$. Because $r$ and $\theta$ are both $2\pi$-periodic, we see that $\gamma(t+2\pi) = \gamma(t)$ for $t \in \RR$. We now prove the claim stronger than property \ref{item:injective}, thereby also proving \ref{item:injective}. Let $i,j,k \in \{+1,-1\}$ and $s,t \in \RR$ satisfy
\begin{equation*}
\gamma(s) = \cI_{i,j,k}(\gamma(t))
\end{equation*}
that is,
\begin{equation}\label{eq:components-form}
\begin{split}
r(s) \cos s \sin \theta(s) &= ir(t) \cos t \sin \theta(t),\\
r(s) \sin s \sin\theta(s) &= jr(t) \sin t  \sin\theta(t),\\
r(s)\cos\theta(s) &= kr(t)\cos\theta(t).
\end{split}
\end{equation}
Squaring and adding all three equations in Equation \eqref{eq:components-form} yields
\begin{equation*}
r(s) = r(t)
\end{equation*}
because $r(s),r(t) > 0$. Squaring and adding the first two equations in Equation \eqref{eq:components-form} therefore yields $\sin^2\theta(s)=\sin^2\theta(t)$, and because $0 < \theta(s),\theta(t) < \pi$, we have that $\sin\theta(s),\sin\theta(t) > 0$, and therefore
\begin{equation*}
\sin \theta(s) = \sin \theta(t).
\end{equation*}
Inserting this information back into Equation \eqref{eq:components-form} yields
\begin{equation*}
\cos s = i\cos t, \quad \sin s = j\sin t, \quad \cos \theta(s) = k\cos\theta(t).     
\end{equation*}
\end{proof}

We now refine our choices of the functions $r$ and $\theta$ to limit the number of possible isometries to ones of the type $\cI_{i,j,k}$. 

\begin{lemma}\label{lem:reduce-to-ijk}
Assume the property of $\theta$ in assumed Lemma \ref{lem:regularity-of-curve}. Assume in addition that, for all $t \in \RR$,
\begin{equation}
r(t+\pi) = r(t), \quad \theta(t+\pi) = \theta(t).
\end{equation}
Assume also that the maximum of $r|_{[0,2\pi)}$ (which exists by continuity and periodicity) is only attained on $\{0,\pi\}$ while the minimum is only attained on $\{\pi/2,3\pi/2\}$. Lastly, assume that $\theta(t) = \pi/2$ for all the extremal points $t \in \{0,\pi/2,\pi,3\pi/2\}$ of $r|_{[0,2\pi)}$. Then, setting
\begin{equation*}
\Gamma = \gamma(\RR),
\end{equation*}
one has that
\begin{equation*}
-\Gamma = \Gamma,
\end{equation*}
and if $\cI : \RR^3 \to \RR^3$ is an isometry such that $\cI(\Gamma) = \Gamma$, then
\begin{equation*}
\cI = \cI_{i,j,k}    
\end{equation*}
for some $i,j,k \in \{+1,-1\}$.
\end{lemma}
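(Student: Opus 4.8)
The plan is to prove the two assertions in order, since the inversion symmetry $-\Gamma=\Gamma$ is needed as an input for the rigidity statement. For the symmetry I would realise the antipodal map $x\mapsto -x$ as a reparametrisation of $\gamma$. Writing a prospective $s$ with $\gamma(s)=-\gamma(t)$ and using $\|\gamma(\cdot)\|=r$, one sees $r(s)=r(t)$ and that the unit vector $(\cos t\sin\theta(t),\sin t\sin\theta(t),\cos\theta(t))$ must be sent to its antipode on the sphere; since $0<\theta<\pi$ keeps $\sin\theta>0$, this forces $s\equiv t+\pi\pmod{2\pi}$, so the half-period shift is the only candidate. I would then compute $\gamma(t+\pi)$ term by term: $r(t+\pi)=r(t)$ fixes the radius and $\cos(t+\pi)=-\cos t$, $\sin(t+\pi)=-\sin t$ reverse the first two coordinates. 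The third coordinate reverses only if $\cos\theta(t+\pi)=-\cos\theta(t)$; this is the one place where the hypotheses on $\theta$ must do real work, forcing the polar angle to reflect as $\theta(t+\pi)=\pi-\theta(t)$. Granting this, $\gamma(t+\pi)=-\gamma(t)$ for every $t$, and hence $-\Gamma=\Gamma$.

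With $-\Gamma=\Gamma$ established, $\Gamma$ is compact and invariant under inversion, so Lemma \ref{lem:isometries-map-to-zero} applies to any isometry $\cI$ with $\cI(\Gamma)=\Gamma$ and yields $\cI(0)=0$; the same lemma then gives that $\cI$ is linear, i.e. $\cI\in O(3)$. It remains to show that such an orthogonal map is one of the diagonal sign maps $\cI_{i,j,k}$.

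The mechanism is to locate the norm-extremal points of $\Gamma$ on the coordinate axes. Because $\|\gamma(t)\|=r(t)$, the maximum of the norm over $\Gamma$ is $r_{\max}=\max r$, attained by hypothesis only at $t\in\{0,\pi\}$; since $\theta=\pi/2$ at these parameters, the corresponding points are $(\pm r_{\max},0,0)$ on the $x$-axis. Similarly the norm is minimised only at $t\in\{\pi/2,3\pi/2\}$, again with $\theta=\pi/2$, giving the points $(0,\pm r_{\min},0)$ on the $y$-axis. A linear isometry preserving $\Gamma$ preserves the norm and carries $\Gamma$ to itself, so it permutes the two-point set of norm-maximisers $\{\pm r_{\max}e_1\}$ and the two-point set of norm-minimisers $\{\pm r_{\min}e_2\}$. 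Linearity forces $\cI(e_1)=\pm e_1$ and $\cI(e_2)=\pm e_2$, and orthogonality then forces $\cI(e_3)$ to be a unit vector perpendicular to both, hence $\cI(e_3)=\pm e_3$. Thus $\cI=\cI_{i,j,k}$ for some $i,j,k\in\{+1,-1\}$.

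I expect the main obstacle to be the first assertion, and specifically the sign of the third coordinate under $t\mapsto t+\pi$: the radius and the first two coordinates are controlled by periodicity alone, so the entire weight of $-\Gamma=\Gamma$ rests on how $\theta$ behaves under the half-period shift, and the forcing argument above shows that no other reparametrisation can rescue it. The rigidity half is then routine bookkeeping, the only subtlety being to invoke the uniqueness of the extremal parameters so that the maximiser and minimiser sets are exactly the four listed points and nothing else.
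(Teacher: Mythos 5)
The rigidity half of your proposal (locating the norm-extremal points of $\Gamma$ on the $x$- and $y$-axes, applying Lemma \ref{lem:isometries-map-to-zero} to get $\cI(0)=0$ and linearity, then forcing $\cI(e_1)=\pm e_1$, $\cI(e_2)=\pm e_2$ and hence $\cI(e_3)=\pm e_3$) is exactly the paper's argument and is fine \emph{given} $-\Gamma=\Gamma$. The genuine gap is in your first half, at the step you explicitly ``grant'': you claim the hypotheses force $\theta(t+\pi)=\pi-\theta(t)$. They do not. The lemma assumes $\theta(t+\pi)=\theta(t)$, which contradicts $\theta(t+\pi)=\pi-\theta(t)$ at every $t$ with $\theta(t)\neq\pi/2$, and the hypotheses pin $\theta$ to $\pi/2$ only at the four parameters $0,\pi/2,\pi,3\pi/2$. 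Under the stated hypotheses the half-period shift actually gives
\begin{equation*}
\gamma(t+\pi)=\bigl(-r(t)\cos t\,\sin\theta(t),\ -r(t)\sin t\,\sin\theta(t),\ +r(t)\cos\theta(t)\bigr)=\cI_{-1,-1,+1}\bigl(\gamma(t)\bigr),
\end{equation*}
and since (as you correctly showed) $s\equiv t+\pi\pmod{2\pi}$ is the only candidate for $\gamma(s)=-\gamma(t)$, one gets $-\gamma(t)\in\Gamma$ precisely at those $t$ with $\theta(t)=\pi/2$. So the gap is not fillable: the conclusion $-\Gamma=\Gamma$ fails under these hypotheses whenever $\theta\not\equiv\pi/2$. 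Concretely, the paper's own example $r(t)=2+\cos 2t$, $\theta(t)=\pi/2+\sin(2t)/5$ satisfies every hypothesis of the lemma, yet $-\gamma(\pi/4)\notin\Gamma$: the only candidate is $\gamma(5\pi/4)$, whose third coordinate is $-2\sin(1/5)$ rather than $+2\sin(1/5)$.

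You should know this is not a defect of your approach alone: the paper's proof of this step makes precisely the sign slip that your bookkeeping refuses to make, asserting that $r(t+\pi)=r(t)$ and $\theta(t+\pi)=\theta(t)$ already yield $\gamma(t+\pi)=-\gamma(t)$, when in fact the third coordinate comes back with the \emph{same} sign. Your more careful analysis thus exposes that the lemma is false as stated; the hypothesis actually needed --- and the one your ``granting'' silently substitutes --- is $\theta(t+\pi)=\pi-\theta(t)$, under which $\gamma(t+\pi)=-\gamma(t)$ holds identically and both your argument and the paper's rigidity argument go through. (The curve in Figure \ref{fig:solid-torus-example} would need a corresponding modification: as written it is invariant under the $\pi$-rotations $\cI_{-1,-1,+1}$, $\cI_{+1,-1,-1}$, $\cI_{-1,+1,-1}$, but not under $-\id$.) As submitted, however, the assertion that the stated hypotheses force the reflection of $\theta$ is a genuine, and indeed unclosable, gap.
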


\begin{proof}
We recall that for $t \in \RR$,
\begin{equation*}
\gamma(t) = (r(t) \cos t \sin \theta(t),r(t) \sin t  \sin\theta(t),r(t)\cos\theta(t)).
\end{equation*}
In the case at hand, for all $t \in \mathbb{R}$, $r(t+\pi) = r(t)$ and $\theta(t+\pi) = \theta(t)$. Therefore,
\begin{equation*}
\gamma(t+\pi) = -\gamma(t).
\end{equation*}
Hence, we have that
\begin{equation*}
-\Gamma = \Gamma.    
\end{equation*}

$\Gamma$ is compact and so we may set
\begin{equation*}
\rho(\Gamma) = \max_{x \in \Gamma} \|x\|, \qquad  m(\Gamma) = \min_{x \in \Gamma}\|x\|,
\end{equation*}
and
\begin{equation*}
\Gamma_\rho = \{x \in \Gamma : \|x\| = \rho(\Gamma)\}, \qquad \Gamma_{m} = \{x \in \Gamma : \|x\| = m(\Gamma)\}.
\end{equation*}
From our assumptions on $r$ and $\theta$,
\begin{align*}
\Gamma_\rho &= \{\gamma(0),\gamma(\pi)\} = \{r(0)e_1,-r(0)e_1\},\\
\Gamma_m &= \{\gamma(\pi/2),\gamma(3\pi/2)\} = \{r(\pi/2)e_2,-r(\pi/2)e_2\},
\end{align*}
where $e_1,e_2,e_3$ denote the standard basis of $\RR^3$. In particular, if $\cI : \RR^3 \to \RR^3$ is an isometry such that $\cI(\Gamma) = \Gamma$, then by Lemma \ref{lem:isometries-map-to-zero}, we have that $\cI(0) = 0$, and therefore, 
\begin{equation*}
\cI(\Gamma_m) = \Gamma_m, \qquad \cI(\Gamma_\rho) = \Gamma_\rho.
\end{equation*}
It follows that,
\begin{equation*}
\cI(\{e_1,-e_1\}) = \{e_1,-e_1\}, \qquad \cI(\{e_2,-e_2\}) = \{e_2,-e_2\},
\end{equation*}
and therefore also that
\begin{equation*}
\cI(\{e_3,-e_3\}) = \{e_3,-e_3\}.
\end{equation*}
Hence, for some $i,j,k \in \{+1,-1\}$,
\begin{equation*}
\cI = \cI_{i,j,k}.    
\end{equation*}
\end{proof}

We now give one last refinement to $r$ and $\theta$ such that the only isometries of $\Gamma = \gamma(\RR)$ are the identity and $\cI_{-1,-1,-1}$.

\begin{proposition}\label{prop:reduce-to-1}
Assume the properties of $r,\theta$ in assumed Lemma \ref{lem:reduce-to-ijk}. In addition, assume that
\begin{equation*}
\theta(\pi/4) \neq \theta(3\pi/4), \quad \theta(\pi/4) \neq \theta(7\pi/4). 
\end{equation*}
Then, if $\cI : \RR^3 \to \RR^3$ is an isometry such that $\cI(\Gamma) = \Gamma$, either $\cI = \cI_{-1,-1,-1}$ or $\cI = \id$.
\end{proposition}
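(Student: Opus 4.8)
The plan is to combine the reduction already achieved in Lemma \ref{lem:reduce-to-ijk} with the fine injectivity statement of Lemma \ref{lem:regularity-of-curve}, and then to exploit the group structure of the coordinate reflections to cut the bookkeeping down to three cases. By Lemma \ref{lem:reduce-to-ijk}, any isometry $\cI$ with $\cI(\Gamma)=\Gamma$ equals $\cI_{i,j,k}$ for some $i,j,k\in\{+1,-1\}$, and $-\Gamma=\Gamma$ shows that $\cI_{-1,-1,-1}$ is itself such a symmetry. Since $\cI_{i,j,k}\circ\cI_{i',j',k'}=\cI_{ii',jj',kk'}$, the symmetries of this form constitute a subgroup of the group of all eight coordinate reflections, and this subgroup contains $\cI_{-1,-1,-1}$. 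Hence it suffices to show that none of $\cI_{1,1,-1}$, $\cI_{1,-1,1}$, $\cI_{-1,1,1}$ preserves $\Gamma$: every reflection other than $\id$ and $\cI_{-1,-1,-1}$ is either one of these three or its composite with $\cI_{-1,-1,-1}$, so if any such reflection were a symmetry, then — the symmetry set being closed under composition and containing $\cI_{-1,-1,-1}$ — one of the three would be a symmetry as well.

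The key tool is the following consequence of Lemma \ref{lem:regularity-of-curve}, evaluated at $t=\pi/4$. Suppose $\cI_{i,j,k}(\Gamma)=\Gamma$. Then $\cI_{i,j,k}(\gamma(\pi/4))\in\Gamma$, so there is $s\in\RR$ with $\gamma(s)=\cI_{i,j,k}(\gamma(\pi/4))$, and Lemma \ref{lem:regularity-of-curve} gives $\cos s=i\cos(\pi/4)=i/\sqrt{2}$, $\sin s=j\sin(\pi/4)=j/\sqrt{2}$, and $\cos\theta(s)=k\cos\theta(\pi/4)$. The first two relations pin down $s$ modulo $2\pi$, and since $\cos$ is injective on $(0,\pi)$ while $0<\theta<\pi$, the third relation yields $\theta(s)=\theta(\pi/4)$ when $k=1$ and $\theta(s)=\pi-\theta(\pi/4)$ when $k=-1$ (all quantities being well defined by $2\pi$-periodicity of $r$ and $\theta$).

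I would then run the three cases. For $\cI_{-1,1,1}$ one reads off $s=3\pi/4$ and $k=1$, whence $\theta(3\pi/4)=\theta(\pi/4)$, contradicting the hypothesis $\theta(\pi/4)\neq\theta(3\pi/4)$. For $\cI_{1,-1,1}$ one reads off $s=7\pi/4$ and $k=1$, whence $\theta(7\pi/4)=\theta(\pi/4)$, contradicting $\theta(\pi/4)\neq\theta(7\pi/4)$. The remaining case $\cI_{1,1,-1}$ is the only one requiring a second evaluation: here $(i,j)=(1,1)$ forces $s=t$ for every $t$ at which the relation applies, so with $k=-1$ one gets $\theta(t)=\pi-\theta(t)$, i.e.\ $\theta\equiv\pi/2$; evaluating at $\pi/4$ and $3\pi/4$ then gives $\theta(\pi/4)=\pi/2=\theta(3\pi/4)$, again contradicting $\theta(\pi/4)\neq\theta(3\pi/4)$. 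This eliminates all three representatives and, by the reduction above, finishes the proof.

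The only genuine subtlety, and the step I would write most carefully, is $\cI_{1,1,-1}$: a single evaluation at $t=\pi/4$ yields merely $\theta(\pi/4)=\pi/2$, which is not in itself contradictory, so one must use that this reflection fixes the angular parameter ($s=t$) at a second point (or indeed for all $t$) in order to force $\theta$ to be constantly $\pi/2$. The other point to check is the group-theoretic reduction — namely that the two inequality hypotheses on $\theta$ really do suffice to knock out all six unwanted reflections once one passes through the central element $\cI_{-1,-1,-1}$ — but this is a routine verification using the pairing of each nontrivial reflection with its negative.
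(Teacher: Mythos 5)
Your proof is correct and follows essentially the same route as the paper: reduce to the three representative reflections $\cI_{-1,1,1}$, $\cI_{1,-1,1}$, $\cI_{1,1,-1}$ by composing with the central symmetry $\cI_{-1,-1,-1}$, then eliminate each one using the rigidity statement of Lemma \ref{lem:regularity-of-curve} at $t=\pi/4$ together with the two hypotheses on $\theta$. The only cosmetic difference is in the case $\cI_{1,1,-1}$: you derive $\theta\equiv\pi/2$ globally and then contradict $\theta(\pi/4)\neq\theta(3\pi/4)$, whereas the paper picks a single point with $\theta(t)\neq\pi/2$ and shows its image leaves $\Gamma$ --- the same computation read in the contrapositive direction.
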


\begin{proof}
First note that, from Lemma \ref{lem:reduce-to-ijk}, for any isometry $\cI : \RR^3 \to \RR^3$ satisfying $\cI(\Gamma) = \Gamma$ is of the form
\begin{equation*}
\cI = \cI_{i,j,k},    
\end{equation*}
for some $i,j,k \in \{+1,-1\}$. From the same lemma,
\begin{equation}\label{eq:-1-sym}
\cI_{-1,-1,-1}(\Gamma) = \Gamma.    
\end{equation}
Now, for any $i,j,k \in \{+1,-1\}$, because
\begin{equation*}
\cI_{i,j,k} \circ \cI_{-1,-1,-1} = \cI_{-i,-j,-k},    
\end{equation*}
Equation \eqref{eq:-1-sym} implies that
\begin{equation*}
\cI_{i,j,k}(\Gamma) = \Gamma \iff \cI_{-i,-j,-k}(\Gamma) = \Gamma.
\end{equation*}
Hence, it suffices to prove that
\begin{align}
\cI_{-1,1,1}(\Gamma) &\neq \Gamma \label{eq:-1,1,1},\\
\cI_{1,-1,1}(\Gamma) &\neq \Gamma \label{eq:1,-1,1},\\
\cI_{1,1,-1}(\Gamma) &\neq \Gamma \label{eq:1,1,-1}.
\end{align}

To prove Equation \eqref{eq:-1,1,1}, consider the point $\gamma(\pi/4) \in \Gamma$ and suppose that
\begin{equation*}
\cI_{-1,1,1}(\gamma(\pi/4)) \in \Gamma.
\end{equation*}
Then, $\gamma(s) = \cI_{-1,1,1}(\gamma(\pi/4))$ for some $s \in [0,2\pi)$. By Lemma \ref{lem:regularity-of-curve}, we have that
\begin{equation*}
r(s) = r(\pi/4), \quad \cos s = -\cos {\pi/4}, \quad \sin s = \sin {\pi/4}, \quad \cos \theta(s) = \cos\theta(\pi/4).     
\end{equation*}
From the middle two equations, it follows that $s = 3\pi/4$. However, from our assumptions, $\theta(\pi/4) \neq \theta(3\pi/4)$, and because $0 < \theta(\pi/4),\theta(3\pi/4)<\pi$, we therefore have that
\begin{equation*}
\cos\theta(\pi/4) \neq \cos\theta(3\pi/4).
\end{equation*}
This is a contradiction with $\cos \theta(s) = \cos\theta(\pi/4)$. Hence, $\cI_{-1,1,1}(\gamma(\pi/4)) \notin \Gamma$. Hence, Equation \eqref{eq:-1,1,1} is proven.

To prove Equation \eqref{eq:1,-1,1}, consider again the point $\gamma(\pi/4) \in \Gamma$ and suppose that $\cI_{1,-1,1}(\gamma(\pi/4)) \in \Gamma$. Then, $\gamma(s) = \cI_{1,-1,1}(\gamma(\pi/4))$ for some $s \in [0,2\pi)$. By Lemma \ref{lem:regularity-of-curve}, we have that
\begin{equation*}
r(s) = r(\pi/4), \quad \cos s = \cos {\pi/4}, \quad \sin s = -\sin {\pi/4}, \quad \cos \theta(s) = \cos\theta(\pi/4).     
\end{equation*}
From the middle two equations, it follows that $s = 7\pi/4$. However, from our assumptions, $\theta(\pi/4) \neq \theta(7\pi/4)$ and because $0 < \theta(\pi/4),\theta(7\pi/4)<\pi$, we therefore have that
\begin{equation*}
\cos\theta(\pi/4) \neq \cos\theta(7\pi/4).
\end{equation*}
This is a contradiction with $\cos \theta(s) = \cos\theta(\pi/4)$. Hence, $\cI_{1,-1,1}(\gamma(\pi/4)) \notin \Gamma$. Hence, Equation \eqref{eq:1,-1,1} is proven.

To prove Equation \eqref{eq:1,1,-1}, observe that because $\theta(\pi/4) \neq \theta(3\pi/4)$, there exists $t \in [0,2\pi)$ such that $\theta(t) \neq \pi/2$. Because $0 < \theta(t) < \pi$, we therefore have that
\begin{equation}\label{eq:cos-not-zero}
\cos\theta(t) \neq 0.    
\end{equation}
Now, we show that $\cI_{1,1,-1}(\gamma(t)) \notin \Gamma$. Indeed, if $\cI_{1,1,-1}(\gamma(t)) \in \Gamma$, then, for some $s \in [0,2\pi)$, $\gamma(s) = \cI_{1,1,-1}(\gamma(t))$. By Lemma \ref{lem:regularity-of-curve}, we have that
\begin{equation*}
r(s) = r(t), \quad \cos s = \cos t, \quad \sin s = \sin t, \quad \cos \theta(s) = -\cos\theta(t).     
\end{equation*}
From the middle two equations, it follows that $s = t$. However, this is a contradiction with $\cos \theta(s) = -\cos\theta(t)$ by Equation \eqref{eq:cos-not-zero}. Hence, Equation \eqref{eq:1,1,-1} is proven, and we are done.
\end{proof}

We now consider the curve $\gamma$ given in the introduction in Figure \ref{fig:solid-torus-example}. That is, choose $r$ and $\theta$ to be given by
\begin{equation*}
r(t) = 2+\cos(2t), \quad \theta(t) = \frac{\pi}{2} + \frac{1}{5}\sin(2t).    
\end{equation*}
We see that $r,\theta$ are smooth and $2\pi$-periodic, $r>0$,
\begin{equation*}
\theta(\pi/4) = \frac{\pi}{2}+\frac{1}{5} \neq \frac{\pi}{2}-\frac{1}{5} = \theta(3\pi/4) = \theta(7\pi/4),    
\end{equation*}
and for all $t \in \RR$,
\begin{align*}
\theta(t) &\in (0,\pi),\\
r(t+\pi) &= r(t),\\
\theta(t+\pi) &= \theta(t),
\end{align*}
Moreover, the maximum of $r|_{[0,2\pi)}$ is only attained on $\{0,\pi\}$ while the minimum is only attained on $\{\pi/2,3\pi/2\}$ and
\begin{equation*}
\theta(t) = \frac{\pi}{2} \text{ for all } t \in \{0,\pi/2,\pi,3\pi/2\}.   
\end{equation*}
That is, the assumptions placed on $r$ and $\theta$ in Proposition \ref{prop:reduce-to-1} hold. Hence, the image $\Gamma = \gamma(\RR)$ has $\cI = \cI_{-1,-1,-1}$ as its only non-trivial Euclidean symmetry. Therefore, by Corollary \ref{cor:isometry-reduction}, for $R > 0$ small-enough, the tubular neighbourhood $N(\Gamma,R)$ is a solid torus which has $\cI = \cI_{-1,-1,-1}$ as its only non-trivial Euclidean symmetry, as asserted in Figure \ref{fig:solid-torus-example} in the introduction.

\bibliographystyle{plain}
\bibliography{Paper.bib}

\end{document}